\newtheorem{theorem}{Theorem}
\newtheorem{lemma}[theorem]{Lemma}
\newtheorem{Proposition}[theorem]{Proposition}
\newtheorem{remark}{Remark}
\newtheorem{example}{Example}
\newtheorem{definition}{Definition}
\newcommand{\Vect}[1]{
  \mathbf{#1}
}
\newcommand{\tr}[1]{
 {#1}^{\!  T}
}
\newcommand{\PrSc}[2]{
  \tr{#1} {#2}
}
\newcommand{\Mat}[1]{
  \underline{\mathbf{#1}}
}
\newcommand{\Esp}{
  \mathbf{E}
}
\newcommand{\RR}[1][2]{\mathbb{R}^{#1}}
\newcommand{\ZZ}[1][2]{\mathbb{Z}^{#1}}
\newcommand{\NN}[1][]{\mathbb{N}^{#1}}
\newcommand{\mSp}{\mathbbm{M}
}
\newcommand{\mm}{\lambda^{\mE}
}
\newcommand{\mE}{\mathbbm{m}
}
\newcommand{\cSp}[1][]{{\Omega}#1
}
\newcommand{\sSp}{\mathbbm{S}
}
\newcommand{\sB}{\mathcal{B}}
\newcommand{\sm}{\mu
}
\newcommand{\xm}[2][x]{#1^{#2}
}
\newcommand{\ism}[2][\Lambda\times\mSp]{
\int_{#1} #2 \sm(d\xm{m})
}
\newcommand{\VE}{V}
\newcommand{\Par}{\theta}                    % param�tre
\newcommand{\ParV}{\Vect{\theta}}            % param�tre vectoriel
\newcommand{\ParVT}{\Vect{\theta}^\star}     % param�tre vectoriel Vrai
\newcommand{\SpPar}{\Vect{\Theta}}           % espace des param�tres
\newcommand{\VIPar}[3]{\VE \left(#1|#2  ; #3 \right)}
\newcommand{\SExI}[3]{V_{#1}(#2|#3)}
\newcommand{\cSEx}[2]{\kappa^{(#1)}_{#2}}
\newcommand{\Det}[1][]{{\bf [Det{#1}]}~}
\newcommand{\DetT}[1][]{{\bf[$\widetilde{\mbox{\bf Det}}${#1}]}~} 
\title{\sc  Takacs-Fiksel method for stationary marked Gibbs point processes}
\author{J.-F. Coeurjolly$^{1,2}$, D. Dereudre$^{3}$, R. Drouilhet$^{2}$ and F. Lavancier$^{4}$  \vspace*{1cm}
\\
{\small $^{1}$ GIPSA-lab, Grenoble University, France,}\\
{\small  $^{2}$ Laboratoire Jean Kuntzmann, Grenoble University, France} \\
{\small $^{3}$ LAMAV UVHC FR 2956, Lille Nord de France University, France, }\\
{\small $^{4}$ Laboratoire de Mathématiques Jean Leray, Nantes University, France.}}
\begin{document}
\bibliographystyle{plain}
\maketitle

%%%%%%%%%%%%%%%%%%%%%%%%%%%%%%%%%%%

%%%part(main)
\bigskip

\begin{abstract}
This paper studies a method to estimate the parameters governing the distribution
of a stationary marked Gibbs point process. This procedure, 
known as the Takacs-Fiksel method, is based on the estimation of the left and right hand sides of the Georgii-Nguyen-Zessin formula
and leads to a family of estimators due to the possible choices of test functions. 
We propose several examples illustrating 
the interest and flexibility of this procedure. We also provide  sufficient 
conditions based on the model and the test functions to derive asymptotic 
properties (consistency and asymptotic normality) of the resulting estimator. 
The different assumptions are discussed for exponential family models and for 
a large class of test functions. A short simulation study is proposed to assess the correctness of the methodology and the asymptotic results.
\end{abstract}

\textbf{Keywords}: stationary marked Gibbs point processes, parametric estimation, Takacs-Fiksel method, asymptotic properties, ergodic theorem, central limit theorem.

%%%%%%%%%%%%%%%%%%%%%%%%%%%%%%%%%%%%%%%%%%%%%%%%%%%%%%%%%%%%%%%%%%%%%%%%%%%%%%
%%%%%%%%%%%%%%%%%%%%%%%%% Motivations %%%%%%%%%%%%%%%%%%%%%%%%%%%%%%%%%%%%%%%%
%%%%%%%%%%%%%%%%%%%%%%%%%%%%%%%%%%%%%%%%%%%%%%%%%%%%%%%%%%%%%%%%%%%%%%%%%%%%%%
\section{Introduction}

Spatial point pattern data arise in a wide range of applications and since the seventies \cite{A-Bri75}, \cite{A-Kri78} various statistical methods have been developed to study these kinds of data (see  \cite{B-MolWaa03}, \cite{MW07} or \cite{B-IllPenSto08} for recent reviews). In particular, a spatial point process is often modelled as the realization of a Gibbs distribution, defined through an interaction function, also called Hamiltonian. Gibbs models are extensions of the well-known Poisson process since they constitute a way to introduce dependence between points. Inference for parametric models in this setting has known a large development during the last decade. The most popular method to estimate the parameters is  certainly the maximum likelihood estimator (MLE). It involves an intractable normalizing constant, but recent developments in computational statistics, in particular perfect simulations, have made  inference feasible for many Gibbs models (see \cite{A-BerMol03}). Although the MLE suffers from a lack of theoretical justifications (only some results for sparse patterns are proposed in \cite{A-Mase92}), some comparison studies, as in \cite{A-Diggle94}, have shown that it outperforms the other estimation methods. Nevertheless, the computation of the MLE remains very time-consuming and even extremely difficult to perform for some models. It is thus necessary to have   quick alternative  estimators at  one's disposal, at least to propose relevant initial values for the MLE computation. The maximum pseudo-likelihood estimator (MPLE for short) constitutes one of them. Proposed by  Besag in \cite{A-Bes74} and popularized by J.L. Jensen and J. M\o{}ller in \cite{A-JenMol91} and A. Baddeley and R. Turner in \cite{A-BadTur00} for spatial point processes, this method has the advantage of being theoretically well-understood (see \cite{A-BilCoeDro08}, \cite{A-DerLav09}) and of being much faster to compute than the MLE. Another estimation procedure is the Takacs-Fiksel estimator, which arose from \cite{A-Fiksel84}, \cite{A-Takacs86},  \cite{A-Fiksel88}. It can be viewed, in some sense, as a generalization of the MPLE. As a matter of  fact, the Takacs-Fiksel method is not very popular, nor really used in practice. The main reason is certainly its relatively poor performances, in terms of mean square error, observed for some particular cases as in  \cite{A-Diggle94}.  However, we think that this procedure deserves some consideration for several reasons that we expose below.\\

The Takacs-Fiksel procedure is based on the Georgii-Nguyen-Zessin formula (GNZ formula for short). Empirical counterparts of the left and the right hand sides of this equation are considered, and the induced estimator is such that the difference of these two terms is close to zero. Since the GNZ formula is valid for any test functions, the Takacs-Fiksel procedure does not only lead to one particular estimator but to a family of estimators, depending on the choice of the test functions. This flexibility is the main advantage of the procedure. We present several examples in the present paper. Let us summarize them in order to underline the interest of this method (see Section \ref{sec-examples} for more details). First, this procedure can allow us to achieve estimations that likelihood-type methods cannot. As an example, we focus on the quermass model, which gathers the area interaction point process  as a particular case (see \cite{A-Kendall99}, \cite{A-Der09}). This model is sometimes used for geometric random objects. From a data set, one typically does not observe the point pattern but only some geometric sets arising from these points. The non-observability of the points makes the likelihood-type inference unfeasible. As we will show, this problem may be solved thanks to the Takacs-Fiksel procedure, provided that the test functions are chosen properly.

Another motivation is the possibility to choose test functions depending on the Hamiltonian in order to construct quicker estimators which  do not require the computation of an integral for each value of the parameter. This improvement appears crucial for rigid models, such as those involved in stochastic geometry (see \cite{A-DerLav10}), for which the MLE is prohibitively time-consuming and the MPLE still remains difficult to implement. Moreover, for some models, it is  even possible to obtain explicit estimators which do not require any simulation nor optimization. This is illustrated for the Strauss model.

Therefore, it appears important to us to understand the theoretical properties of this procedure. This problem is the main objective of the present paper. We prove the consistency and the asymptotic normality of the induced estimator in a very general setting. In particular, we obtain a central limit theorem for the Takacs-Fiksel estimator with the classical rate of convergence, i.e. square root of the volume of the observation domain. This asymptotic result leads to the following comment: as a quick consistent estimator, the Takacs-Fiksel estimator appears to be a very good starting point for refined algorithms. Among them, let us mention the first step Newton-Raphson algorithm (as used in \cite{A-Huang99}) which allows an accurate approximation of the MLE, starting from a consistent estimator, in only one step. Although the theoretical justifications are missing in the Gibbs framework, it is well-known that this procedure leads to an efficient estimator in the classical iid case (see \cite{B-Lehman}). Another possibility could be to exploit the local asymptotic normality of the model in order to construct an adaptive estimator from the Takacs-Fiksel estimator. The Local Asymptotic Normality  property (LAN) has only been proved for restrictive models in \cite{A-Mase92}, but one can hope that it remains true for most Gibbs models.  This procedure also leads to an efficient estimator. All these possibilities are interesting prospects for future investigations.\\

Some asymptotic properties of the Takacs-Fiksel procedure have already been investigated in two previous studies: one by L. Heinrich in \cite{Heinrich92} and the one by J.-M. Billiot in \cite{A-Billiot97}. These papers have different frameworks and are based on different tools, but they both involve regularity and integrability type assumptions on the Hamiltonian and a theoretical condition which ensures that the contrast function (associated to the Takacs-Fiksel procedure) has a unique minimum. In \cite{Heinrich92}, the consistency and the asymptotic normality are obtained for a quite large class of test functions. These results are, however, proved under the Dobrushin condition (see Theorems~2 and~3 in  \cite{Heinrich92}) which implies the uniqueness of the underlying Gibbs measure and some mixing properties. This condition imposes a dramatic reduction of the space of possible values for the parameters of the model. In \cite{A-Billiot97}, the author focuses only on pairwise interaction point processes (which excludes the quermass model for example). The author mainly obtained the consistency for a specific class of test functions. In the case of a multi-Strauss pairwise interaction point process, the author also proved that the identifiability condition holds for the class of test functions he considered. 

In contrast, our asymptotic results are proved in a very general setting, i.e. for a large class of stationary marked Gibbs models and test functions.  The method employed to prove asymptotic normality is based on a conditional centering assumption, first appeared in \cite{guyonkunsch92} for the Ising model and generalized to certain spatial point processes in \cite{A-JenKun94}. The main restriction that this method induces is only the finite range of the Hamiltonian. There are no limitations on the space of parameters and, in particular, the possible presence of phase transition does not affect the asymptotic behavior of the estimator. Moreover, the test functions may depend on the parameters. This extension seems important to us because, as emphasized in Section \ref{sec-quick},  such test functions can lead to quick and/or explicit estimators. All the general hypotheses assumed for the asymptotic results are discussed. For  this, we focus on exponential family models, that is, on models whose interaction function is linear in the parameters. We show that our integrability and regularity assumptions are not restrictive since they are valid for a large class of models such as the Multi-Strauss marked point process, the Strauss-disc type point process, the Geyer's triplet point process, the quermass model and for all test functions used as a motivation for this work. In the setting of the exponential family models, we also discuss the classical identifiability condition which is required for the Takacs-Fiksel procedure. To the best of our knowledge, this is the first attempt to discuss it. We will specially 
dwell on questions like: what choices of test functions (and how many test functions) lead to a unique minimum of the contrast function? We propose general criteria and provide examples. It seems commonly admitted that to achieve the identification of the Takacs-Fiksel procedure, one should at least choose as many test functions as the number of parameters. As a consequence of our study, it appears that one should generally strictly choose  more test functions than the number of parameters to achieve identification. 

The rest of the paper is organized as follows. Section~\ref{sec-background} introduces notation and a short background on marked Gibbs point processes. The Takacs-Fiksel method is presented in Section~\ref{TFprocedure}. It is based on the GNZ formula which is recalled in Section~\ref{TFprocedure} also.  Several examples of test functions are given. They aim at illustrating our interest in considering the Takacs-Fiksel procedure. The asymptotic results of the induced estimator are proposed in Section~\ref{sec-results}. Our results are obtained from a single realization observed in a domain whose volume is supposed to increase to infinity. Some integrability and regularity assumptions made for the Hamiltonian and for the test functions are discussed in this section while in Section~\ref{SectionIdent} the identifiability condition is specifically dealt with. In Section \ref{non-hereditary}, the very special situation where the energy function is not hereditary is considered. The GNZ formula is no longer valid in this setting, but  it has been recently extended  in \cite{A-DerLav09} thanks to a slight modification.  This leads to a natural generalization of the Takacs-Fiksel procedure. In Section~\ref{sec-simuls}, we propose a brief simulation study in order to assess the correctness of the asymptotic results we obtained in Section~\ref{sec-results}. Finally, Section~\ref{sec-proofs} contains the proofs of the asymptotic results.

%%%%%%%%%%%%%%%%%%%%%%%%%%%%%%%%%%%%%%%%%%%%%%%%%%%%%%%%%%%%%%%%%%%%%%%%%%%%%%%%%%%%%
%%%%%%%%%%%%%%%%%%%%%%%%%% Background et méthode %%%%%%%%%%%%%%%%%%%%%%%%%%%%%%%%%%%%
%%%%%%%%%%%%%%%%%%%%%%%%%%%%%%%%%%%%%%%%%%%%%%%%%%%%%%%%%%%%%%%%%%%%%%%%%%%%%%%%%%%%%

\section{Background and notation} \label{sec-background}

\subsection{General notation, configuration space}
%$\sB(\RR[d])$  $\RR[d]$ $\Lambda^c$ $\Subset$

Subregions of $\RR[d]$ will typically be denoted by $\Lambda$ or $\Delta$ and will always be assumed to be Borel with positive Lebesgue measure. We write $\Lambda \Subset \RR[d]$ if $\Lambda$ is bounded. $\Lambda^c$ denotes the complementary set of $\Lambda$ inside $\RR[d]$.
The notation $|.|$ will be used without ambiguity for different kind of objects. For a countable set $\mathcal J$, $|\mathcal J|$ represents the number of elements belonging to $\mathcal J$; For $\Lambda\Subset\RR[d]$, $|\Lambda|$ is  the volume of $\Lambda$; For $x \in \RR[d]$, $|x|$ corresponds to its uniform norm while $\| x\|$ is  its Euclidean norm.
For all $ x\in\RR[d],\rho>0$, %and $i\in\ZZ[d]$,
 let $\mathcal{B}( x,\rho):=\{  y \in \RR[d],\|  y-  x\|<\rho\}$. 
%and $\mathbbm{B}(i,\rho) := \mathcal{B}(i,\rho) \cap \ZZ[d]$.
%norm : $|.|$ $|\Vect x|$  $|\mathcal J|$  $|\Delta|$ 
For a matrix $\Mat M$, let  $\|\Mat M\|$ be the Frobenius norm of $\Mat M$ defined by $\|\Mat M\|^2=Tr(\tr{\Mat M}\Mat M)$, where $Tr$ is the trace operator. %For a vector $\Vect x$, $|\Vect x|$ is simply its euclidean norm.

%$\mathcal{B}(\Vect x,\rho):=\{\Vect y,\ |\Vect y-\Vect x|<\rho\}$. $\mathbbm{B}_i(\rho) = \mathcal{B}(i,\rho) \cap \ZZ[d]$.

The space $\RR[d]$ is endowed with the Borel $\sigma$-algebra $\mathcal{B}(\RR[d])$ and the Lebesgue measure $\lambda$. Let $\mSp$ be a measurable space, which aims at being the mark space, endowed with the $\sigma$-algebra $\mathcal M$ and the probability measure $\lambda^{\mE}$. The state space of the point processes will be $\sSp:=\RR[d]\times\mSp$ measured by $\mu:=\lambda\otimes\lambda^{\mE}$.
We shall denote for short $x^m=(x,m)$ an element of $\sSp$. 

%The space $\RR[d]$ is endowed with the Borel $\sigma$-algebra and the Lebesgue measure $\lambda$.
%  $\mSp$  $\lambda^{\mE}$. The state space of the point processes will be $\sSp:=\RR[d]\times\mSp$ measured by $\mu:=\lambda\otimes\lambda^{\mE}$.

%$x^m=(x,m)$ 

%The space of marked point configurations will be denoted by $\Omega=\Omega(\sSp)$. This is the set of locally finite subset $\varphi$ in $\sSp$ which means that for every bounded set $\Lambda\in \sB(\RR[d])$ the set $\varphi_\Lambda:=\varphi\cap(\Lambda\times \mSp)$ has finite cardinality $|\varphi_\Lambda|$. $\Omega$ is endowed with the $\sigma$-algebra $\mathcal F$ generated by the sets $\{\varphi\in\Omega,  |\varphi\cap(\Lambda\times A)|=n\}$ for all $n\in\NN$, for all  $A\in\mathcal M$ and for all $\Lambda\in\sB(\RR[d])$. We will use without ambiguity the notations, e.g. $\varphi\cup x^m:=\varphi\cup\{x^m\}$ and 
%for $x^m\in\varphi$, $\varphi\smallsetminus x^m:=\varphi\smallsetminus \{x^m\}$. 

A configuration is a subset $\varphi$ of $\sSp$ which is locally finite in that $\varphi_\Lambda:=\varphi \cap (\Lambda\times\mSp)$ has finite cardinality $N_\Lambda(\varphi):=|\varphi_\Lambda|$ for all $\Lambda \Subset \RR[d]$. The space $\Omega=\Omega(\sSp)$ of all configurations is equipped with the $\sigma$-algebra $\mathcal{F}$ that is generated by the counting variables $\varphi\to |\varphi_{\Lambda\times \mathcal A}|$ for any $\Lambda \Subset \RR[d]$ and any $\mathcal A\in \mathcal M$. Finally, let $T=\left( \tau_x \right)_{x\in \RR[d]}$ be the shift group, where $\tau_x:\Omega\to \Omega$ is the translation by the vector $-x \in \RR[d]$ (i.e. the application $\varphi\mapsto \cup_{(y,m)\in\varphi} \{(y-x,m)\}$). For the sake of simplicity, we set $\varphi\cup x^m:=\varphi\cup\{x^m\}$ and   $\varphi\setminus x^m:=\varphi\setminus \{x^m\}$. 

\subsection{Marked Gibbs point processes}

Our results will be expressed for general stationary Gibbs point processes. Since we are interested in asymptotic properties, we have to consider these point processes acting on the infinite volume $\RR[d]$. Let us briefly recall their definition.

A marked point process $\Phi$ is an $\Omega$-valued random variable, with probability distribution $P$ on $(\Omega,\mathcal F)$. The most prominent marked point process is the marked Poisson process $\pi^{z}$ with intensity measure $z\lambda\otimes\lambda^{\mE}$ on $\sSp$, with $z>0$ (see for example \cite{B-KKM78} for definition and properties). For $\Lambda\Subset \RR[d]$, let us denote  by $\pi^z_\Lambda$ the marginal probability measure in $\Lambda$ of the Poisson process with intensity $z$. Without loss of generality, the intensity  $z$ is fixed to~1 and we simply write $\pi$ and $\pi_\Lambda$ in place of $\pi^1$ and $\pi_\Lambda^1$.

Let $\ParV \in \RR[p]$ (for some $p\geq 1$). For any $\Lambda\Subset \RR[d]$, let us consider the parametric function $V_{\Lambda}(.;\ParV)$ from $\Omega$ into $\RR[]\cup\{+\infty\}$. From a physical point of view, $V_{\Lambda}(\varphi;\ParV)$ is the energy of $\varphi_{\Lambda}$ in $\Lambda$ given the outside configuration $\varphi_{\Lambda^c}$. In this paper, we focus on stationary point processes on $\RR[d]$, i.e. with stationary (i.e. $T$-invariant) probability measure. For any $\Lambda\Subset \RR[d]$, we therefore consider $V_{\Lambda}(.;\ParV)$ to be $T$-invariant, i.e. $V_{\Lambda}(\tau_x \varphi;\ParV)=V_{\Lambda}(\varphi;\ParV)$ for any $x\in\RR[d]$. 
Furthermore, ($V_{\Lambda}(.;\ParV))_{\Lambda\Subset\RR[d]}$ is a compatible family of energies, i.e.  for every $\Lambda\subset\Lambda'\Subset\RR[d]$, there exists a measurable function $\psi_{\Lambda,\Lambda'}$ from $\Omega$ into $\RR[]\cup\{+\infty\}$ such that 
\begin{equation}\label{compatibility}
 \forall\varphi\in\Omega\quad V_{\Lambda'}(\varphi;\ParV)=V_{\Lambda}(\varphi;\ParV)+\psi_{\Lambda,\Lambda'}(\varphi_{\Lambda^c};\ParV).
 \end{equation}
 
 A stationary marked Gibbs point process is characterized by a stationary marked Gibbs measure usually defined as follows (see \cite{B-Pre76}).
 \begin{definition}A probability measure $P_{\ParV}$ on $\Omega$ is a stationary marked Gibbs measure for the compatible family of $T$-invariant energies $(V_{\Lambda}(.;\ParV))_{\Lambda\Subset\RR[d]}$ if for every $\Lambda\Subset\RR[d]$, for $P_{\ParV}$-almost every outside configuration $\varphi_{\Lambda^c}$, the law of $P_{\ParV}$ given $\varphi_{\Lambda^c}$ admits the following conditional density with respect to $\pi_\Lambda$:
 $$f_{\Lambda}(\varphi_{\Lambda}|\varphi_{\Lambda^c};\ParV)=\frac{1}{Z_{\Lambda}(\varphi_{\Lambda^c};\ParV)}e^{-V_{\Lambda}(\varphi;\ParV)},$$
where $Z_{\Lambda}(\varphi_{\Lambda^c};\ParV)$ is a normalization called the partition function.
\end{definition}

The existence of a Gibbs measure on $\Omega$ which satisfies these conditional specifications is a difficult issue. We refer the interested reader to \cite{B-Rue69}, \cite{B-Pre76}, \cite{A-BerBilDro99}, \cite{A-Der05}, \cite{A-DerDroGeo09} for the technical and mathematical development of the existence problem.

In a first step, we assume that the family of energies is hereditary (the non-hereditary case will be considered in Section~\ref{non-hereditary}), which means that for any $\Lambda\Subset \RR[d]$, for any $\varphi\in\Omega$, and for all $x^m\in\Lambda\times\mSp$, 
\begin{equation}\label{heredite}V_{\Lambda}(\varphi;\ParV)=+\infty \Rightarrow V_{\Lambda}(\varphi \cup x^m;\ParV)=+\infty,\end{equation}
or equivalently, for all $x^m\in\varphi_{\Lambda}$,  $f_{\Lambda}(\varphi_{\Lambda}|\varphi_{\Lambda^c};\ParV)>0 \Rightarrow f_{\Lambda}(\varphi_{\Lambda}\setminus x^m|\varphi_{\Lambda^c};\ParV)>0$.

The minimal assumption of our paper is then:
\begin{list}{}{}
\item \textbf{[Mod]}: For any $\ParV \in\SpPar$, where $\SpPar$ is a compact subset of $\RR[p]$, there exists a stationary marked Gibbs measure $P_{\ParV}$ for the  compatible $T$-invariant hereditary family $(V_{\Lambda}(.;\ParV))_{\Lambda\Subset \RR[d]}$. Our data consist in the realization of a marked point process $\Phi$ with stationary marked Gibbs measure $P_{\ParVT}$, where $\ParVT \in \mathring{\SpPar}$ is the unknown parameter vector to estimate.
\end{list}
Let us note that \textbf{[Mod]} ensures the existence of at least one stationary Gibbs measure. When this Gibbs measure is not unique, we say that the phase transition occurs. In this situation the set of Gibbs measures is a  Choquet simplex  and any Gibbs measure is a mixture of extremal ergodic Gibbs measures. If the Gibbs measure is unique, it is necessary ergodic (see \cite{B-Geo88} for more details about these properties).

In the rest of this paper, the reader has mainly to keep in mind the concept of  local energy defined as the energy required to insert a point $x^m$ into the configuration $\varphi$ and expressed for any $\Lambda\ni x$ by 
 \begin{equation}\label{energielocale}
V^{}\left( x^{m}|\varphi; \ParV \right):=V_{\Lambda}(\varphi \cup x^m;\ParV)-V_{\Lambda}(\varphi;\ParV).
\end{equation}
From the compatibility of the family of energies, i.e. (\ref{compatibility}), this definition does not depend on $\Lambda$. 

%The main assumption involving the existence of Gibbs measure is the following.
%\begin{list}{}{}
%\item \textbf{[Mod]}: For any $\ParV \in \SpPar$,  the  compatible family of energies $(V_{\Lambda}(.;\ParV))_{\Lambda\in\sB(\RR[d])}$ is hereditary, invariant by translation, and such that an associated Gibbs measure $P_{\ParV}$ exists and is stationary. Our data consist in the realization of a point process with Gibbs measure $P_{\ParVT}$. The vector $\ParVT$ is thus the true parameter to be estimated, assumed to be in $\mathring{\SpPar}$.
%\end{list}

\section{The Takacs-Fiksel estimation procedure}\label{TFprocedure}

\subsection{Presentation}

The basic ingredient for the definition of the Takacs-Fiksel method is the so-called GNZ formula. This equation was proved by Georgii, Nguyen and Zessin in the seventies but other authors such as Papangelou and Takahashi also contributed to its establishment. See \cite{papangelou09} and \cite{A-Zessin09} for historical comments and \cite{A-Geo76} or \cite{A-NguZes79b} for a general presentation.
\begin{lemma}[Georgii-Nguyen-Zessin Formula]
Under \textbf{[Mod]}, for any measurable function $h(\cdot,\cdot;\ParV): \sSp\times \Omega\to \RR[]$  such that the following quantities are defined and finite, then
\begin{equation}\label{GNZnonstat}
\Esp\left( \ism[ \mathbb{R}^d \times \mSp]{h\left(x^m,\Phi;\ParV\right) e^{- \VIPar{x^m}{\Phi}{\ParVT}}} \right) = 
\Esp\left( \sum_{x^m \in \Phi } h\left(x^m,\Phi\setminus x^m;\ParV\right)  \right) ,
\end{equation}
where $\Esp$ denotes the expectation with respect to $P_{\ParVT}$. 
\end{lemma}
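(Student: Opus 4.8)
The plan is to derive \eqref{GNZnonstat} from the defining property of a Gibbs measure (the conditional densities $f_{\Lambda}$) combined with the Mecke identity for the Poisson process, after a reduction to bounded windows. Fix $\ParV$ once and for all and abbreviate $P=P_{\ParVT}$, write $V(x^m|\varphi)$ for $\VIPar{x^m}{\varphi}{\ParVT}$, and regard $h(\cdot,\cdot)=h(\cdot,\cdot;\ParV)$ as a fixed measurable function; the parameter $\ParV$ plays no probabilistic role, whereas the energy and the law are both taken at the true value $\ParVT$, which is exactly what makes the identity self-consistent. Splitting $h=h^{+}-h^{-}$ and using that, by hypothesis, both sides of \eqref{GNZnonstat} are finite when $h$ is replaced by $|h|$, it suffices to prove the identity for a nonnegative $h$. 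For such an $h$, setting $\Lambda_{n}=\mathcal{B}(0,n)$, monotone convergence gives both $\Esp\big(\sum_{x^m\in\Phi}h(x^m,\Phi\setminus x^m)\big)=\lim_{n}\Esp\big(\sum_{x^m\in\Phi_{\Lambda_{n}}}h(x^m,\Phi\setminus x^m)\big)$ and $\Esp\big(\int_{\mathbb{R}^d\times\mSp}h(x^m,\Phi)e^{-V(x^m|\Phi)}\,\mu(dx^m)\big)=\lim_{n}\Esp\big(\int_{\Lambda_{n}\times\mSp}h(x^m,\Phi)e^{-V(x^m|\Phi)}\,\mu(dx^m)\big)$, so it is enough to establish, for every bounded $\Lambda$,
\begin{equation}\label{GNZloc}
\Esp\Big(\int_{\Lambda\times\mSp}h(x^m,\Phi)\,e^{-V(x^m|\Phi)}\,\mu(dx^m)\Big)=\Esp\Big(\sum_{x^m\in\Phi_{\Lambda}}h(x^m,\Phi\setminus x^m)\Big).
\end{equation}

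Fix a bounded $\Lambda$, and let $\mathcal{F}_{\Lambda^c}$ denote the sub-$\sigma$-algebra of $\mathcal{F}$ generated by $\varphi\mapsto\varphi_{\Lambda^c}$. By \textbf{[Mod]}, conditionally on $\mathcal{F}_{\Lambda^c}$ the law of $\Phi_{\Lambda}$ has density $f_{\Lambda}(\cdot\,|\,\Phi_{\Lambda^c};\ParVT)$ with respect to $\pi_{\Lambda}$, so that for any nonnegative measurable $G$ one has $\Esp(G(\Phi)\,|\,\mathcal{F}_{\Lambda^c})=\int_{\Omega}G(\chi\cup\Phi_{\Lambda^c})\,f_{\Lambda}(\chi\,|\,\Phi_{\Lambda^c};\ParVT)\,\pi_{\Lambda}(d\chi)$ (recall $\pi_{\Lambda}$ is carried by configurations contained in $\Lambda\times\mSp$). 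Applying this with $G(\varphi)=\sum_{x^m\in\varphi_{\Lambda}}h(x^m,\varphi\setminus x^m)$, the right-hand side of \eqref{GNZloc} equals
$$\Esp\Big(\int_{\Omega}\Big(\sum_{x^m\in\chi}h\big(x^m,(\chi\setminus x^m)\cup\Phi_{\Lambda^c}\big)\Big)f_{\Lambda}(\chi\,|\,\Phi_{\Lambda^c};\ParVT)\,\pi_{\Lambda}(d\chi)\Big).$$
To the inner integral I would apply, for $P$-a.e.\ fixed outer configuration, the Mecke (reduced Campbell) identity for the Poisson process $\pi_{\Lambda}$, namely $\int_{\Omega}\sum_{x^m\in\chi}F(x^m,\chi\setminus x^m)\,\pi_{\Lambda}(d\chi)=\int_{\Lambda\times\mSp}\int_{\Omega}F(x^m,\chi)\,\pi_{\Lambda}(d\chi)\,\mu(dx^m)$, here with $F(x^m,\chi)=h(x^m,\chi\cup\Phi_{\Lambda^c})\,f_{\Lambda}(\chi\cup x^m\,|\,\Phi_{\Lambda^c};\ParVT)$. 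The one genuinely algebraic step is then the factorization of the density of $\chi\cup x^m$: using the explicit form $f_{\Lambda}=Z_{\Lambda}^{-1}e^{-V_{\Lambda}}$, the definition \eqref{energielocale} of the local energy, the hereditary assumption \eqref{heredite} and the convention $e^{-\infty}=0$ (so that the relation remains valid on $\{V_{\Lambda}(\chi\cup\Phi_{\Lambda^c};\ParVT)=+\infty\}$), one gets $f_{\Lambda}(\chi\cup x^m\,|\,\Phi_{\Lambda^c};\ParVT)=e^{-V(x^m|\chi\cup\Phi_{\Lambda^c})}\,f_{\Lambda}(\chi\,|\,\Phi_{\Lambda^c};\ParVT)$ for $\mu$-a.e.\ $x^m\in\Lambda\times\mSp$.

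Substituting this back, the right-hand side of \eqref{GNZloc} becomes
$$\Esp\Big(\int_{\Lambda\times\mSp}\Big(\int_{\Omega}h(x^m,\chi\cup\Phi_{\Lambda^c})\,e^{-V(x^m|\chi\cup\Phi_{\Lambda^c})}\,f_{\Lambda}(\chi\,|\,\Phi_{\Lambda^c};\ParVT)\,\pi_{\Lambda}(d\chi)\Big)\mu(dx^m)\Big).$$
For each fixed $x^m\in\Lambda\times\mSp$ the inner $\pi_{\Lambda}$-integral is, by the conditional-expectation formula above applied to $G(\varphi)=h(x^m,\varphi)e^{-V(x^m|\varphi)}$, exactly $\Esp\big(h(x^m,\Phi)e^{-V(x^m|\Phi)}\,|\,\mathcal{F}_{\Lambda^c}\big)$ (note that $x^m$ is an external point, not belonging to the configuration argument). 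Taking expectations and using Fubini together with the tower property — licit since $h\ge 0$, and in the signed case because the $|h|$-integrals are finite by hypothesis — the right-hand side of \eqref{GNZloc} equals $\int_{\Lambda\times\mSp}\Esp\big(h(x^m,\Phi)e^{-V(x^m|\Phi)}\big)\mu(dx^m)=\Esp\big(\int_{\Lambda\times\mSp}h(x^m,\Phi)e^{-V(x^m|\Phi)}\,\mu(dx^m)\big)$, i.e.\ the left-hand side of \eqref{GNZloc}; letting $\Lambda=\Lambda_{n}\uparrow\mathbb{R}^d$ then yields \eqref{GNZnonstat}. I expect the only delicate point to be the bookkeeping with infinite energies: one must verify that $e^{-V_{\Lambda}(\chi\cup x^m\cup\Phi_{\Lambda^c};\ParVT)}=e^{-V(x^m|\chi\cup\Phi_{\Lambda^c})}e^{-V_{\Lambda}(\chi\cup\Phi_{\Lambda^c};\ParVT)}$ holds on all of $\Omega$ (this is precisely where heredity \eqref{heredite} is needed), and that all interchanges of sum, integral and conditional expectation are covered by the finiteness of the $|h|$-integrals; everything else is routine measure theory.
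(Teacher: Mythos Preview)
The paper does not actually prove this lemma: it is stated as a classical result and the reader is referred to \cite{A-Geo76} and \cite{A-NguZes79b} for a proof, with historical remarks pointing to \cite{papangelou09} and \cite{A-Zessin09}. So there is no ``paper's own proof'' to compare against.

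That said, your argument is correct and is essentially the standard derivation found in those references: reduce to nonnegative $h$ and bounded windows by monotone convergence; condition on the outside configuration so that $\Phi_{\Lambda}$ has the Gibbs density $f_{\Lambda}$ with respect to $\pi_{\Lambda}$; apply the Slivnyak--Mecke identity for the Poisson process $\pi_{\Lambda}$; and use the key factorization $f_{\Lambda}(\chi\cup x^m\,|\,\psi)=e^{-V(x^m|\chi\cup\psi)}f_{\Lambda}(\chi\,|\,\psi)$, which is exactly the definition \eqref{energielocale} of the local energy together with heredity \eqref{heredite} to cover the degenerate case $V_{\Lambda}(\chi\cup\psi)=+\infty$. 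Your handling of the infinite-energy bookkeeping is accurate: when $f_{\Lambda}(\chi\,|\,\psi)=0$ the local energy is formally undefined, but the product on the right vanishes regardless, and heredity forces the left-hand side to vanish too. The remaining steps (Fubini, tower property, passage to the limit $\Lambda_n\uparrow\mathbb{R}^d$) are routine under the stated integrability hypothesis.
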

For stationary marked Gibbs point processes, (\ref{GNZnonstat}) reduces to
\begin{equation}\label{GNZ}
\Esp\left( h\left(0^M,\Phi;\ParV\right) e^{- \VIPar{0^M}{\Phi}{\ParVT}} \right) = 
\Esp\left( h\left(0^M,\Phi\setminus 0^M;\ParV\right)  \right),
\end{equation}
where $M$ denotes a random variable with probability distribution $\mm$. \\

A second tool used throughout the paper is the ergodic Theorem established in \cite{A-NguZes79}. Let us give a simpler form, here, which is sufficient in this paper.

\begin{lemma}[Ergodic result]\label{ergodiclemma}
Under \textbf{[Mod]}, we assume that $P_{\ParVT}$ is ergodic. Then for any family of measurable functions $F_\Lambda$, indexed by the bounded sets $\Lambda$, from $\Omega$ to $\RR[]$ which are additive (i.e. $F_{\Lambda\cup \Lambda'}=F_\Lambda+F_{\Lambda'}-F_{\Lambda\cap \Lambda'}$), shift invariant (i.e. $F_\Lambda(\varphi)=F_{\tau(\Lambda)}(\tau(\varphi))$ for any translation $\tau$) and integrable (i.e. $\Esp(|F_{[0,1]^d}|)<+\infty$), we have that for $P_{\ParVT}$-almost every $\varphi$
$$ \lim_{n\to+\infty} |\Lambda_n|^{-1}F_{\Lambda_n}(\varphi)=\Esp(F_{[0,1]^d}),$$
where $\Lambda_n=[-n,n]^d$ (other regular domains $(\Lambda_n)_{n\geq 1}$ converging towards $\RR[d]$ could be also considered).
\end{lemma}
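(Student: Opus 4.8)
The plan is to recognise this as the multidimensional pointwise ergodic theorem applied to additive, shift-invariant functionals of a stationary point process — essentially the ergodic theorem of \cite{A-NguZes79} — and to reduce it to the classical Wiener ergodic theorem for the translation action on $(\Omega,\mathcal F,P_{\ParVT})$.

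\textbf{First}, I would turn $F_{\Lambda_n}$ into a Birkhoff sum. Partition $\Lambda_n=[-n,n]^d$ into the $(2n)^d$ translated unit cubes $C_k:=k+[0,1)^d$, $k\in I_n:=\{-n,\dots,n-1\}^d$. Reading additivity in the form $F_{\Lambda\cup\Lambda'}=F_\Lambda+F_{\Lambda'}$ whenever $|\Lambda\cap\Lambda'|=0$ (the intended meaning for such functionals, as for $N_\Lambda$), one gets $F_{\Lambda_n}(\varphi)=\sum_{k\in I_n}F_{C_k}(\varphi)$, and shift-invariance gives $F_{C_k}(\varphi)=F_{C_0}(\tau_k\varphi)$; since $C_0$ and $[0,1]^d$ differ by a null set, $F_{C_0}=g$ with $g:=F_{[0,1]^d}\in L^1(P_{\ParVT})$. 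Hence $|\Lambda_n|^{-1}F_{\Lambda_n}(\varphi)=(2n)^{-d}\sum_{k\in I_n}g(\tau_k\varphi)$.

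\textbf{Next}, I would invoke ergodicity. Stationarity of $P_{\ParVT}$ makes each integer translation $\tau_k$ measure-preserving, so the multidimensional pointwise ergodic theorem gives $(2n)^{-d}\sum_{k\in I_n}g(\tau_k\varphi)\to\Esp(g\mid\mathcal J)$ $P_{\ParVT}$-a.s., with $\mathcal J$ the $\sigma$-algebra of $\ZZ[d]$-invariant events. Separately, the continuous Wiener ergodic theorem for the $\RR[d]$-action, together with the $T$-ergodicity of $P_{\ParVT}$, gives $|\Lambda_n|^{-1}\int_{\Lambda_n}g(\tau_x\varphi)\,dx\to\Esp(g)$ a.s.; and cutting $\Lambda_n$ along the unit grid as before rewrites this integral as $(2n)^{-d}\sum_{k\in I_n}\bar g(\tau_k\varphi)$ with $\bar g:=\int_{[0,1)^d}F_{[0,1)^d+u}(\cdot)\,du$.

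\textbf{The hard part} will be to show that the Birkhoff sums of $g$ and of $\bar g$ over $I_n$ have the same a.s.\ limit — i.e.\ to identify the limit in the first convergence with the constant $\Esp(g)$ — because $T$-ergodicity ($\RR[d]$-ergodicity) does not by itself make $\mathcal J$ trivial under $P_{\ParVT}$. The mechanism I would use is that $g-\bar g$ is exactly a sum of $d$ coordinate coboundaries: peeling off the coordinates one at a time, additivity and shift-invariance yield $g-\bar g=\sum_{j=1}^{d}\big(\Psi_j-\Psi_j\circ\tau_{e_j}\big)$ for suitable $\Psi_j$ (in dimension one, $g-\bar g=\Psi-\Psi\circ\tau_{e_1}$ with $\Psi=\int_0^1 F_{[0,u)}\,du$). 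The Birkhoff sums of such a coboundary over $I_n$ telescope into boundary contributions carried by $O(n^{d-1})$ cells, which are $o(|\Lambda_n|)$ $P_{\ParVT}$-a.s.\ by a $(d-1)$-dimensional maximal/ergodic estimate. Combining with the previous step then gives $|\Lambda_n|^{-1}F_{\Lambda_n}(\varphi)\to\Esp(g)=\Esp(F_{[0,1]^d})$. For regular domains $\Lambda_n$ other than the cubes, the first-step decomposition acquires a genuine boundary error, again negligible since $|\partial\Lambda_n|=o(|\Lambda_n|)$ along a regular exhaustion. The alternative, of course, is simply to cite \cite{A-NguZes79}, where exactly this reconciliation of the discrete box-sum with the continuous average is carried out in the required generality.
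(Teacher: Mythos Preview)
The paper does not prove this lemma at all: it is stated as a simplified form of the ergodic theorem of Nguyen--Zessin \cite{A-NguZes79} and used as a black box throughout. Your closing remark --- that one can simply cite \cite{A-NguZes79} --- is precisely, and only, what the paper does.

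Your proposal therefore goes well beyond the paper by sketching an actual proof. The strategy is the right one and is essentially the content of the cited reference: write $F_{\Lambda_n}$ as a $\ZZ[d]$-Birkhoff sum of $g=F_{[0,1]^d}$, and then reconcile $\ZZ[d]$-averages with the $\RR[d]$-ergodic theorem via the coboundary identity $g-\bar g=\sum_j(\Psi_j-\Psi_j\circ\tau_{e_j})$. Your one-dimensional computation $g-\bar g=\Psi-\Psi\circ\tau_{e_1}$ with $\Psi=\int_0^1 F_{[0,u)}\,du$ is correct, and the telescoping argument in higher dimension goes through as you indicate.

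Two small technical points you are gliding over. First, the hypothesis is only $\Esp|F_{[0,1]^d}|<\infty$; your $\Psi_j$ involves $F_\Lambda$ for sub-cubes $\Lambda\subsetneq[0,1]^d$, and nothing in the statement guarantees these are integrable (or even finite), so that the boundary terms are genuinely $o(|\Lambda_n|)$ a.s. In every application in the paper this is harmless, since the functionals $C_\Lambda$ satisfy uniform integrability over bounded $\Lambda$, but strictly speaking it is an extra assumption. Second, identifying $F_{[0,1)^d}$ with $F_{[0,1]^d}$ presumes $F_\Lambda$ is insensitive to Lebesgue-null modifications of $\Lambda$; again true for the functionals at hand (integrals over $\Lambda$ plus sums over $\varphi_\Lambda$, the latter because $P_{\ParVT}$ puts no mass on configurations meeting a fixed null set), but not literally part of the stated hypotheses.
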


Let $h(\cdot,\cdot;\ParV): \sSp\times \Omega\to \RR[]$ and let us define for any $\varphi \in \Omega$, $\ParV\in \SpPar$ and $\Lambda\Subset\RR[d]$
\begin{equation} \label{eq-defC}
C_{\Lambda}(\varphi;h,\ParV) := \ism[\Lambda \times \mSp]{ h(x^m,\varphi;\ParV) e^{-V(x^m|\varphi;\ParV)} } -\sum_{x^m\in \varphi_\Lambda} h(x^m,\varphi\setminus x^m;\ParV).
\end{equation}

Assume that we observe the realization of a marked point process $\Phi$ satisfying \textbf{[Mod]} in a domain $\Lambda_n$. For appropriate choices of the functional $h$ and a sequence of domain $\Lambda_n$, then it is possible to apply the ergodic result in Lemma \ref{ergodiclemma} to prove that the first and second terms of $|\Lambda_n|^{-1} C_{\Lambda_n}(\Phi;h,\ParV)$ respectively converge $P_{\ParVT}$-almost surely to the left and right terms of~(\ref{GNZ}).

The latter observation is the basic argument to define the Takacs-Fiksel method. Let us give $K$ functions $h_k(\cdot,\cdot;\ParV): \sSp\times \Omega\to \RR[]$
(for $k=1,\ldots,K$), then the Takacs-Fiksel estimator is simply defined by

\begin{equation}\label{eq-defTakacs}
\widehat{\ParV}_n(\varphi) := \widehat{\ParV}_n^{TF}(\varphi)= \mathop{\arg\min}_{\ParV\in \SpPar} \sum_{k=1}^K C_{\Lambda_n}(\varphi;h_k,\ParV)^2,
\end{equation}
where $\mathop{\arg\min}_{\ParV\in \SpPar} F(\ParV)$ means the parameter $\ParV$ which minimizes the function $F$. Under identification assumption (\ref{identifiabilite}) given later, this minimum is obtained for a unique $\ParV$ provided that $n$ is large enough.

\subsection{Some examples}\label{sec-examples}

In this section, some examples of models and test functions $h$, involved in \eqref{eq-defC}, are provided. The choices made in previous studies are presented in \ref{sec-classical}. The two examples presented in \ref{sec-quick} and \ref{sec-strauss} show the relevance of the Takacs-Fiksel procedure to provide quick estimates. The last example, in \ref{sec-quermass}, is concerned with the possible identification of a special marked point process, the quermass model, and shows that appropriate choices of test functions can solve identification problems when points are not observed.

There is no asymptotic consideration in this section. Therefore, the different estimates are defined over a window, say $\Lambda$, and are denoted by $\widehat{\theta}$.

\subsubsection{Classical examples}\label{sec-classical}

Let us first quote the particular case when the Takacs-Fiksel estimator reduces to the maximum pseudo-likelihood estimator introduced by \cite{A-JenMol91} for spatial point processes. The MPLE is obtained by maximizing the log-pseudo-likelihood contrast function, given by
\begin{equation} \label{eq-defLPL}
LPL_{\Lambda}(\varphi;\ParV)  = -\ism[\Lambda \times \mSp]{e^{-V(x^m|\varphi;\ParV)} } \; - \;  \sum_{x^m\in \varphi_{\Lambda}} V(x^m|\varphi\setminus x^m;\ParV).
\end{equation}
Therefore, with the choice $h_k(x^m,\varphi;\ParV) = \frac{\partial}{\partial\theta_k} V(x^m|\varphi;\ParV)$, $k=1,\dots,p$, the estimator $\widehat{\ParV}$, defined by \eqref{eq-defTakacs}, solves the system  $C_{\Lambda}(\varphi;h_k,\ParV)=0$, $k=1,\dots,p$, which means that $\widehat{\ParV}$ is the root of the gradient vector of $LPL_{\Lambda}$, i.e. $\widehat{\ParV}$ is the MPLE.

The first empirical study of the Takacs-Fiksel estimator can be found in \cite{A-Diggle94}, where this estimate is compared to other estimators for some unmarked pairwise interaction point processes with two parameters. In this context different test functions $h_{r_1},\cdots,h_{r_K}$ were used, where for  $r>0$ 
 $$h_r(x,\varphi;\ParV)=\left|\varphi_{\mathcal B(x,r)}\right|=\sum_{y\in\varphi} \mathbf{1}_{[0, r]}(\|y-x\|).$$
The integral term involved in \eqref{eq-defC} is approximated by discretization and the induced estimation \eqref{eq-defTakacs} is then assessed. Note that with this choice of test functions, the sum term in \eqref{eq-defC}, when normalized by $|\Lambda|^{-1}$,  is an estimation of $\rho^2\mathcal K(r)$, where $\mathcal K(\cdot)$ is the reduced second order function and $\rho$ denotes the intensity of the stationary point process $\Phi$, i.e. for all $B\in\sB(\RR[d])$, $\Esp(\Phi(B))=\rho|B|$.
 
The latter choice requires the computation of the integral in \eqref{eq-defC} for all $\ParV$. A more convenient choice could be the one first proposed by Fiksel:
  \begin{equation}\label{fiksel-choice}h_r(x,\varphi;\ParV)=\left|\varphi_{\mathcal B(x,r)}\right| e^{ \VIPar{x}{\varphi}{\ParV}}=e^{ \VIPar{x}{\varphi}{\ParV}}\sum_{y\in\varphi} \mathbf{1}_{[0, r]}(\|y-x\|).\end{equation}
 In the stationary case, this leads to the following approximation thanks to the ergodic theorem
 \begin{equation}\label{fiksel-integral}\frac{1}{|\Lambda|}\int_{\Lambda}{ h_r(x,\varphi;\ParV) e^{-V(x|\varphi;\ParV)} dx}\approx \Esp\left| \Phi_{\mathcal B(0,r)}\right| = \rho \pi r^2.
\end{equation}
The integral term in  \eqref{eq-defC} is thus easily approximated by $|\varphi_{\Lambda}|\pi r^2$ for all $\ParV$, while the sum can be explicitly computed. These historical choices of test functions have a natural extension in the marked case.

 \subsubsection{Some choices leading to quick estimations}\label{sec-quick}
 
The main advantage of the Takacs-Fiksel procedure is to provide quick consistent estimators, that might supply initial values for a more evolved procedure.  A simple way to achieve this goal is to generalize \eqref{fiksel-choice} and consider test functions of the form 
$$h(x^m,\varphi;\ParV)=\tilde h(x^m,\varphi)e^{ \VIPar{x^m}{\varphi}{\ParV}},$$
where $\tilde h(x^m,\varphi)$ does not depend on $\ParV$. So, the integral term in  \eqref{eq-defC} has to be computed only once and not for all $\ParV$, while the sum term in  \eqref{eq-defC} does not require any approximation.  Hence, the optimisation problem \eqref{eq-defTakacs} may be resolved very quickly.

In some particular examples, explicit formulas may even be obtained for the integral term, as in \eqref{fiksel-integral}. In the same spirit, an explicit estimator for the Strauss interaction is provided below.

  \subsubsection{An example of explicit estimator for the Strauss process}\label{sec-strauss}
 
The (non-marked) Strauss process with range of interaction $R>0$ is given  for any $\Lambda\Subset\RR[d]$ by
\begin{equation} \label{strauss}  
V_{\Lambda}\left( \varphi ; \ParV \right) = \theta_1 |\varphi_{\Lambda}| + \theta_2\sum_{\begin{subarray}{l} x,y \in \varphi\\ \{x,y\}\cap\Lambda\not=\emptyset \end{subarray}} \mathbf{1}_{[0,R]} \left( \|x-y\|\right),
\end{equation}
   where $\theta_1\in\RR[]$ and $\theta_2>0$ are the two parameters of the model.
Alternatively,
$$ \VIPar{x}{\varphi}{\ParV}= \theta_1 + \theta_2\sum_{y \in \varphi} \mathbf{1}_{[0, R]}(\|y-x\|).$$
Let us consider the following family of test functions, for $k\in \NN\setminus \{0\}$, 
\begin{equation} \label{def-hkStrauss}
h_{k}(x,\varphi;\ParV)=\begin{cases} e^{(k-1)\theta_2} &\textrm{if } \left|\varphi_{\mathcal B(x,R)}\right|=k-1,\\ 0 & \textrm{otherwise.}\end{cases}
\end{equation}
This choice  gives in (\ref{eq-defC})
$$ C_{\Lambda}(\varphi;h_k,\ParV) = e^{-\theta_1}  V_{k-1,\Lambda}(\varphi)-e^{(k-1)\theta_2}  N_{k-1,\Lambda}(\varphi),$$
where $N_{k,\Lambda}(\varphi)$ denotes the number of points $x\in\varphi_{\Lambda}$ such that $|\mathcal B(x,R)\cap(\varphi\setminus x)|=k$  and $V_{k,\Lambda}(\varphi)$ denotes the volume of the set $\{y\in\Lambda, |\mathcal B(y,R)\cap\varphi|=k\}$.

Several explicit estimators may be obtained following (\ref{eq-defTakacs}) from (at least) two test functions as above. Let us quote the simplest one, corresponding to the choice $h_1$ and $h_2$  in (\ref{eq-defTakacs}). This leads to the contrast function $C_{\Lambda}(\varphi;h_1,\ParV)^2+C_{\Lambda}(\varphi;h_2,\ParV)^2$ which  vanishes at the unique point $(\widehat{\ParV}_{1}(\varphi),\widehat{\ParV}_{2}(\varphi))$ with 
\begin{equation}\label{TFEexplicit}
\widehat{\ParV}_{1}(\varphi)=\ln\left(\frac{V_{0,\Lambda}(\varphi)}{N_{0,\Lambda}(\varphi)}\right),\quad \widehat{\ParV}_{2}(\varphi)=\ln\left(\frac{V_{1,\Lambda}(\varphi)}{N_{1,\Lambda}(\varphi)}\right)-\ln\left(\frac{V_{0,\Lambda}(\varphi)}{N_{0,\Lambda}(\varphi)}\right).
\end{equation}
This estimator of $(\theta_1,\theta_2)$ is completely explicit, provided the quantities $N_{k,\Lambda}(\varphi)$ and $V_{k,\Lambda}(\varphi)$ are available. They can be easily approximated by computational geometry tools.

 \subsubsection{A  solution for unobservability issues}\label{sec-quermass}
 
 The quermass model introduced in \cite{A-Kendall99} is a marked point process which aims at modelling random sets in $\RR[2]$. This is a generalization of the well-known Boolean model  to interacting random balls. 
Let us denote by $x^R$ a marked point where $x$ and $R>0$ (i.e. the mark) respectively represent the center and the radius of the associated ball $\mathcal B(x,R)$. For a finite configuration $\varphi$, i.e. with a finite support instead of $\RR[2]$, the quermass energy is defined for $(\theta_1,\theta_2,\theta_3,\theta_4)\in\RR[4]$ by:
 $$V\left( \varphi ; \ParV \right) = \theta_1 |\varphi| + \theta_2\ \mathcal P(\Gamma)+\theta_3\ \mathcal A(\Gamma)+\theta_4\ \mathcal E(\Gamma)\quad\textrm{where } \Gamma=\bigcup_{(x,R)\in\varphi}\mathcal B(x,R)$$
and $\mathcal P(\Gamma)$, $\mathcal A(\Gamma)$ and $\mathcal E(\Gamma)$ denote respectively the perimeter, the area and the Euler-Poincar\'e characteristic (i.e. number of components minus number of holes) of the set $\Gamma$. To extend this definition to the infinite support $\RR[2]$, it is convenient to suppose that the radii of the balls are almost surely uniformly bounded (i.e. $\lambda^{\mE}([0,R_0])=1$ for some $R_0>0$). In this case the family of energies $(V_\Lambda)$ is defined by 

$$ V_\Lambda\left( \varphi ; \ParV \right) = V\left(\varphi_{\Lambda\oplus B(0,2R_0)}; \ParV \right)-V\left(\varphi_{\Lambda\oplus B(0,2R_0)\backslash \Lambda}; \ParV \right).$$

This definition may be extended to unbounded radius, though a restriction to the so-called tempered configurations is needed to ensure the existence of the associated Gibbs measure.  We refer to \cite{A-Der09} for more details.

 When $\theta_2=\theta_3=\theta_4=0$, this model reduces to the Boolean model (see \cite{B-StoKenMecRus87} for a survey). The area process (see \cite{A-Baddeley95}) is also a particular case, taking $\theta_2=\theta_4=0$.

In practice, one only observes the random set $\Gamma$, so the marked points $x^R$ in $\varphi$ are unknown. A challenging task is then to estimate the parameters $(\theta_1,\theta_2,\theta_3,\theta_4)$ in the presence of this unobservability issue. In particular, a direct application of the maximum likelihood or  pseudo-likelihood method is impossible to estimate all the parameters, and especially $\theta_1$ which requires the observation of the number of points in $\varphi$. For the other parameters, which are related to the observable functionals $\mathcal P$, $\mathcal A$ and $\mathcal E$, the MLE has been investigated in \cite{A-Moller08}. 

Let us show that the Takacs-Fiksel procedure may be used to estimate $\theta_1$ in spite of this unobservability issue. Indeed,  it is possible to choose some test function $h$ such that both the integral and the sum in \eqref{eq-defC} are computable. The unobservability issue occurs mainly for the sum term. Let us consider the following example of test function:
\begin{equation}\label{def-hper}
h_{per}(x^R, \varphi;\ParV)=\mathcal P\left(\mathcal C(x,R)\cap\Gamma^c\right),
\end{equation}
where $\mathcal C(x,R)$ is the sphere $\{y, ||y-x||=R\}$. For any finite configuration $\varphi$, we then  have $$\sum_{x^R\in \varphi} h_{per}(x^R,\varphi\setminus x^R;\ParV) = \mathcal P(\Gamma),$$
so that this sum is computable even if each term $ h_{per}(x^R,\varphi\setminus x^R;\ParV)$ is not. If the configuration $\varphi$ is infinite then for any bounded set $\Lambda$, $\sum_{x^R\in \varphi_\Lambda} h_{per}(x^R,\varphi\setminus x^R;\ParV) $ is equal to the perimeter of $\Gamma$ restricted to $\Lambda$ plus a boundary term which is asymptotically negligible with respect to the volume of $\Lambda$.

 Consequently, assuming $(\theta_2, \theta_3, \theta_4)$ known, $\theta_1$ may be estimated thanks to (\ref{eq-defTakacs}) with the above choice of test function. 

The joint estimation of all four parameters might be achieved thanks to additional test functions sharing the same 
property as  above, i.e. such that the sum in  \eqref{eq-defC}  is observable.

In the particular case of the area process, $\theta_2=\theta_4=0$ and $R$ is constant (i.e. $\lambda^{\mE}=\delta_R$), it suffices to find one more test function to ensure an identifiable estimation (see Example~\ref{exArea} in Section \ref{SectionIdent} for more details). A possible additional test function is 
\begin{equation}\label{def-hiso}
h_{iso}(x^R, \varphi;\ParV)=\begin{cases} 1\quad\text{if}\quad \mathcal P\left(\mathcal C(x,R)\right)=2\pi R \\ 0 \quad\text{otherwise.}\end{cases}
\end{equation}
In this case $\sum_{x^R\in \varphi} h_{iso}(x^R,\varphi\setminus x^R;\ParV)$ corresponds to the number of isolated balls in $\Gamma$.

%%%%%%%%%%%%%%%%%%%%%%%%%%%%%%%%%%%%%%%%%%%%%%%%%%%%%%%%%%%%%%%%%%%%%%%%%%%%%%%%%%%%%%%%%%%
%%%%%%%%%%%%%%%%%%%%%%%%%%%%%% Main Results %%%%%%%%%%%%%%%%%%%%%%%%%%%%%%%%%%%%%%%%%%%%%%%
%%%%%%%%%%%%%%%%%%%%%%%%%%%%%%%%%%%%%%%%%%%%%%%%%%%%%%%%%%%%%%%%%%%%%%%%%%%%%%%%%%%%%%%%%%%

\section{Asymptotic results for the Takacs-Fiksel estimator} \label{sec-results}

We present in this section asymptotic results for the Takacs-Fiksel estimator for a point process  satisfying \textbf{[Mod]}  and assumed to be observed in a domain $\Lambda_n$, where $(\Lambda_n)_{n\geq 1}$ is a sequence of increasing cubes whose size goes to $+\infty$ as $n$ goes to $+\infty$.

First, for a function $g$ depending on $\ParV$, we denote by $\Vect{g}^{(1)}(\ParV)$ (resp. $\Mat{g}^{(2)}(\ParV)$) the gradient vector of length $p$ (resp. the Hessian matrix of size $(p,p)$) evaluated at $\ParV$. Let us rewrite the Takacs-Fiksel estimator as 
$$
\widehat{\ParV}_n(\varphi) = \mathop{\arg\min}_{\ParV\in \SpPar} U_{\Lambda_n}(\varphi;\Vect{h},\ParV),
$$
with $U_{\Lambda_n}(\varphi;\Vect{h},\ParV) = |\Lambda_n|^{-2} \; \sum_{k=1}^K C_{\Lambda_n}(\varphi;h_k,\ParV)^2$, where $\Vect{h}=(h_1,\dots,h_K)$ and $C_{\Lambda_n}$ is given by \eqref{eq-defC}.

\subsection{Consistency}

The consistency is obtained under the following assumptions, denoted by \textbf{[C]}: for any Gibbs measure $P_{\ParVT}$, for all $\ParV \in \SpPar$, $k=1,\ldots,K$ and $\varphi\in \Omega$
\begin{itemize}
\item[\textbf{[C1]}] For all $x\in\RR[d]$, $h_k(x^m,\varphi;\ParV)=h_k(0^m,\tau_x\varphi;\ParV)$ and
$${\displaystyle \Esp \left( \left| h_k \left( 0^M , \Phi ; \ParV \right) \right| e^{- V^{}\left( 0^M|\Phi; \ParV^\prime \right)}\right) <+\infty, \mbox{ for }  \ParV^\prime=\ParV, \ParVT.}$$
\item[\textbf{[C2]}] $U_{\Lambda_n}(\varphi;\Vect{h},\cdot)$ is a continuous function for $P_{\ParVT}-$a.e. $\varphi$.
\item[\textbf{[C3]}] 
\begin{equation}\label{identifiabilite}
\sum_{k=1}^K \Esp \left( h_k(0^M,\Phi;\ParV) \left(
e^{-V(0^M|\Phi;\ParV)} -e^{-V(0^M|\Phi;\ParVT)}
\right) \right)^2 = 0 \quad \Longrightarrow \ParV= \ParVT.
\end{equation}
\item[\textbf{[C4]}] $h_k$ and $f_k$, defined by $f_k(x^m,\varphi;\ParV):= h_k(x^m,\varphi;\ParV)e^{-V(x^m|\varphi;\ParV)}$, are continuously differentiable and
$$\Esp\left(
\max_{\ParV\in \SpPar} |f_k(0^M,\Phi;\ParV)|
 \right)<+\infty 
\quad \mbox{ and } \quad
\Esp\left(
\max_{\ParV\in \SpPar} |h_k(0^M,\Phi;\ParV)| e^{-V(0^M|\Phi;\ParVT)}
 \right)<+\infty,$$
$$\Esp\left( 
\max_{\ParV\in \SpPar} \| \Vect{f_k}^{(1)}(0^M,\Phi;\ParV)\|
 \right)<+\infty 
\quad \mbox{ and } \quad
\Esp\left(
\max_{\ParV\in \SpPar} \| \Vect{h_k}^{(1)}(0^M,\Phi;\ParV)\|
e^{-V(0^M|\Phi;\ParVT)}
\right)<+\infty.$$
\end{itemize}

\begin{theorem} \label{prop-cons}
Assuming~\textbf{[Mod]} and \textbf{[C]} then, as $n \to +\infty$, the Takacs-Fiksel estimator $\widehat{\ParV}_n(\varphi)$ converges towards $\ParVT$ for $P_{\ParVT}-$a.e. $\varphi$.
\end{theorem}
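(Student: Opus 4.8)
The plan is to apply the classical minimum-contrast machinery: identify the limiting contrast function $U$, show $U_{\Lambda_n}(\varphi;\Vect{h},\cdot)$ converges to $U$ uniformly on the compact set $\SpPar$ for $P_{\ParVT}$-a.e. $\varphi$, check that $U$ is uniquely minimized at $\ParVT$, and conclude that the sequence of minimizers $\widehat\ParV_n(\varphi)$ must converge to $\ParVT$. A preliminary reduction: by \textbf{[Mod]} the Gibbs measure $P_{\ParVT}$ need not be ergodic, but it decomposes as a mixture of ergodic measures over the Choquet simplex; since the statement is a $P_{\ParVT}$-almost sure convergence, it suffices to prove it under each ergodic component, so from now on I would assume $P_{\ParVT}$ ergodic and invoke Lemma~\ref{ergodiclemma}.

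First I would fix $\ParV$ and write $C_{\Lambda_n}(\varphi;h_k,\ParV)$ as the difference of the two additive functionals appearing in \eqref{eq-defC}: the functional $\Lambda\mapsto\ism[\Lambda\times\mSp]{f_k(x^m,\varphi;\ParV)}$ and the functional $\Lambda\mapsto\sum_{x^m\in\varphi_\Lambda}h_k(x^m,\varphi\setminus x^m;\ParV)$, with $f_k$ as in \textbf{[C4]}. Both are additive and, by the stationarity in \textbf{[C1]}, shift-invariant; the integrability hypotheses in \textbf{[C1]} (taken at $\ParV'=\ParV$ for the first term and, via the $e^{-V(0^M|\Phi;\ParVT)}$-weighting that is implicit in the GNZ identity, at $\ParV'=\ParVT$ for the second) give the $L^1$ bound required by Lemma~\ref{ergodiclemma}. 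Hence $|\Lambda_n|^{-1}C_{\Lambda_n}(\varphi;h_k,\ParV)$ converges $P_{\ParVT}$-a.s. to
$$
c_k(\ParV):=\Esp\bigl(h_k(0^M,\Phi;\ParV)e^{-V(0^M|\Phi;\ParV)}\bigr)-\Esp\bigl(h_k(0^M,\Phi\setminus 0^M;\ParV)\bigr),
$$
which by the stationary GNZ formula \eqref{GNZ} equals $\Esp\bigl(h_k(0^M,\Phi;\ParV)(e^{-V(0^M|\Phi;\ParV)}-e^{-V(0^M|\Phi;\ParVT)})\bigr)$. Squaring and summing over $k$, $U_{\Lambda_n}(\varphi;\Vect{h},\ParV)\to U(\ParV):=\sum_{k=1}^K c_k(\ParV)^2$ pointwise in $\ParV$, and by construction $U(\ParVT)=0$; the identifiability hypothesis \textbf{[C3]} is exactly the statement that $\ParVT$ is the unique zero, hence the unique minimizer, of $U$.

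The main obstacle is upgrading this pointwise convergence to uniform convergence over $\SpPar$, which is what is needed to pass from minimizers of $U_{\Lambda_n}$ to the minimizer of $U$. Here I would use \textbf{[C4]}: the dominated gradients $\Esp(\max_{\ParV}\|\Vect{f_k}^{(1)}(0^M,\Phi;\ParV)\|)<\infty$ and the companion bound for $\Vect{h_k}^{(1)}$ let me apply the ergodic theorem not just to $C_{\Lambda_n}(\cdot;h_k,\ParV)$ but simultaneously to its $\ParV$-derivative, giving an a.s. bound on $\sup_{\ParV}\|\,|\Lambda_n|^{-1}\partial_\ParV C_{\Lambda_n}(\varphi;h_k,\ParV)\|$; this equicontinuity in $\ParV$, combined with pointwise convergence on a countable dense subset of the compact $\SpPar$ and continuity \textbf{[C2]} of $U_{\Lambda_n}$, yields uniform convergence $\sup_{\ParV\in\SpPar}|U_{\Lambda_n}(\varphi;\Vect{h},\ParV)-U(\ParV)|\to0$ $P_{\ParVT}$-a.s. (the standard Arzel\`a--Ascoli / stochastic-equicontinuity argument). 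Finally, the routine minimum-contrast conclusion: if $\widehat\ParV_n(\varphi)$ did not converge to $\ParVT$, compactness of $\SpPar$ would give a subsequence converging to some $\ParV^\star\neq\ParVT$; uniform convergence forces $U_{\Lambda_n}(\varphi;\Vect{h},\widehat\ParV_n)\to U(\ParV^\star)$, while by definition of $\widehat\ParV_n$ as a minimizer $U_{\Lambda_n}(\varphi;\Vect{h},\widehat\ParV_n)\le U_{\Lambda_n}(\varphi;\Vect{h},\ParVT)\to U(\ParVT)=0$, so $U(\ParV^\star)=0$, contradicting \textbf{[C3]}. Hence $\widehat\ParV_n(\varphi)\to\ParVT$ for $P_{\ParVT}$-a.e.\ $\varphi$.
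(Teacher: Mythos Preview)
Your proposal is correct and follows essentially the same route as the paper: reduction to the ergodic case via the Choquet decomposition, pointwise convergence of $|\Lambda_n|^{-1}C_{\Lambda_n}$ by the ergodic Lemma~\ref{ergodiclemma} plus the GNZ formula, identification of the unique minimizer via \textbf{[C3]}, and then an equicontinuity/modulus-of-continuity control using the gradient bounds in \textbf{[C4]}. The only cosmetic difference is that the paper packages the final step as a direct application of Guyon's Theorem~3.4.3 (contrast function plus a modulus-of-continuity condition on $U_{\Lambda_n}$), whereas you spell out the compactness/subsequence argument by hand.
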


%\begin{remark}
Assumptions \textbf{[C1]}, \textbf{[C2]} and \textbf{[C4]} are related to the regularity and the integrability of the different test functions and the local energy function. Some general criteria may be proposed to verify these assumptions, see Section~\ref{sec-ass} for a discussion. Assumption \textbf{[C3]} corresponds to an identifiability condition and requires much more attention. It is well-known that such an assumption is fulfilled when $\Vect{h}=\Vect{V}^{(1)}$ (leading to the MPLE) under mild assumptions (see Assumption \textbf{[Ident]} proposed by \cite{A-BilCoeDro08}). The question to know if this remains true for more general test functions is difficult (actually it is untrue in several cases). This will be discussed specifically in Section~\ref{SectionIdent}.
%\end{remark}

\subsection{Asymptotic normality}

We need the following assumptions denoted by \textbf{[N]}: For any Gibbs measure $P_{\ParVT}$, $k=1,\ldots,K$, $\Lambda\Subset \mathbb{R}^d$, $\varphi\in \Omega$  and $\ParV$ in a neighborhood $\mathcal{V}(\ParVT)$ of $\ParVT$: 
\begin{itemize}
\item[\textbf{[N1]}] $\Esp \left( \left|  C_\Lambda(\Phi;h_k,\ParVT)  \right|^3 \right)<+\infty$.
\item[\textbf{[N2]}] For any sequence of bounded domains $\Gamma_n$ such that $\Gamma_n \to 0$ as $n\to +\infty$, $\Esp\left( C_{\Gamma_n}(\Phi;h_k,\ParVT)^2\right)\to 0$.
\item[\textbf{[N3]}] $C_\Lambda(\varphi;h_k,\ParVT)$ depends only on $\varphi_{\Lambda\oplus B(0,D)}$ for some $D\geq0$ (which is uniform in $\Lambda$, $\varphi$, $\ParVT$).
\item[\textbf{[N4]}] $h_k$ and $f_k$ (defined in \textbf{[C4]}) are twice continuously differentiable in $\ParV$ and 
$$
\Esp \left(\| \Mat{h}^{(2)}(0^M,\Phi;\ParV)\|  e^{-\VIPar{0^M}{\Phi}{\ParVT}} \right) <+\infty 
\quad \mbox{ and } \quad
\Esp \left(\| \Mat{f}^{(2)}(0^M,\Phi;\ParV)\|  \right) <+\infty .
$$
\end{itemize}
Let us remark that Assumption \textbf{[N3]} leads us to consider in general that $V$ has a finite range, which means that there exists $D\geq 0$ such that for all $(m,\varphi) \in \mSp\times \Omega$ and all $\ParV \in \SpPar$
$$\VIPar{0^{m}}{\varphi}{\ParV} = \VIPar{0^{m}}{\varphi_{\mathcal B(0,D)}}{\ParV}.$$
The same kind of finite range property is also expected for $(h_k)$.

\begin{theorem} \label{prop-norm}
Under Assumptions \textbf{[Mod]}, \textbf{[C]} and \textbf{[N]}, for any ergodic Gibbs measure $P_{\ParVT}$ the following convergence in distribution holds as $n\to +\infty$
\begin{equation} \label{conv-norm}
|\Lambda_n|^{1/2} \Mat{\mathcal{E}}(\Vect{h},\ParVT) \tr{\Mat{\mathcal{E}}(\Vect{h},\ParVT)} \left(\widehat{\ParV}_n(\Phi)-\ParVT \right) \stackrel{d}{\rightarrow} \mathcal{N}\left(0, {\Mat{\mathcal{E}}(\Vect{h},\ParVT)} \Mat{\Sigma}(\Vect{h},\ParVT) \tr{\Mat{\mathcal{E}}(\Vect{h},\ParVT)} \right),
\end{equation}
where $\Mat{\mathcal{E}}(\Vect{h},\ParVT)$ is the $(p,K)$ matrix defined for $i=1,\ldots,p$ and $k=1,\ldots,K$ by
$$
\left( \Mat{\mathcal{E}}(\Vect{h},\ParVT) \right)_{ik}= \Esp \left( 
h_k(0^M,\Phi;\ParVT) \left(\Vect{V}^{(1)}(0^M|\Phi;\ParVT)\right)_i e^{-V(0^M|\Phi;\ParVT)} 
\right)
$$
and where $\Mat{\Sigma}(\Vect{h},\ParVT)$ is the $(K,K)$ matrix defined by 
\begin{equation}\label{matC}
\Mat{\Sigma}(\Vect{h},\ParVT)= D^{-d} \; \sum_{|\ell|\leq 1} \Esp \left( 
\widetilde{\Vect{C}}_{\Delta_0(D)}(\Phi;\Vect{h},\ParVT) \tr{\widetilde{\Vect{C}}_{\Delta_\ell(D)}(\Phi;\Vect{h},\ParVT)}
\right),
\end{equation}
where, for all $k\in \ZZ[d]$, $\Delta_k(D)$ is the cube centered at $kD$ with side-length $D$ and where, for any bounded domain $\Lambda$, $\widetilde{\Vect{ C}}_{\Lambda}(\Phi;\Vect{h},\ParVT):=\left( {C}_{\Lambda}(\Phi;h_k,\ParVT)\right)_{k=1,\ldots,K}$.
\end{theorem}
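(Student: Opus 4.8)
## Proof Plan for Theorem \ref{prop-norm}

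\textbf{Overall strategy.} The plan is to follow the classical minimum-contrast / M-estimator route: expand the gradient of $U_{\Lambda_n}$ around $\ParVT$, show that the normalized score vector $|\Lambda_n|^{-1/2}$ times (a linear image of) $\Vect{C}_{\Lambda_n}(\Phi;\Vect h,\ParVT)$ is asymptotically Gaussian, and control the Hessian by an ergodic argument. Concretely, write $\Vect{C}_n(\ParV):=\widetilde{\Vect C}_{\Lambda_n}(\varphi;\Vect h,\ParV)$, so $U_{\Lambda_n}=|\Lambda_n|^{-2}\,\tr{\Vect C_n}\Vect C_n$. Since $\widehat{\ParV}_n$ minimizes $U_{\Lambda_n}$ and (by Theorem \ref{prop-cons}) converges a.s.\ to $\ParVT\in\mathring{\SpPar}$, for $n$ large the first-order condition $\Vect U_{\Lambda_n}^{(1)}(\varphi;\Vect h,\widehat{\ParV}_n)=0$ holds; expanding the $p$-vector $|\Lambda_n|^{-2}\,(\Vect C_n^{(1)})^T\Vect C_n$ and using $\Vect C_n^{(1)}(\ParV)^T\Vect C_n(\ParV)$ one obtains, after a Taylor expansion in $\ParV$ around $\ParVT$,
\begin{equation*}
|\Lambda_n|^{-2}\,\bigl(\Vect C_n^{(1)}(\ParVT)\bigr)^{T}\Vect C_n(\ParVT)
+\Bigl(|\Lambda_n|^{-2}\,(\Vect C_n^{(1)})^T\Vect C_n^{(1)} + R_n\Bigr)(\widehat{\ParV}_n-\ParVT)=0,
\end{equation*}
where $R_n$ collects second-derivative remainder terms. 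The three ingredients are then: (i) a law-of-large-numbers identification of $|\Lambda_n|^{-1}\Vect C_n^{(1)}(\ParVT)\to -\,\Mat{\mathcal E}(\Vect h,\ParVT)^T\cdot(\text{something})$ — more precisely $|\Lambda_n|^{-1}\Vect C_n^{(1)}(\ParVT)\to -\Mat{\mathcal E}(\Vect h,\ParVT)^T$ coordinatewise via Lemma \ref{ergodiclemma}; (ii) the CLT for $|\Lambda_n|^{-1/2}\Vect C_n(\ParVT)$; (iii) negligibility of $R_n$ and of $|\Lambda_n|^{-1}\Vect C_n(\ParVT)\to 0$ (which kills the cross terms multiplied by second derivatives).

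\textbf{The ergodic/LLN steps.} Each $C_{\Lambda}(\varphi;h_k,\ParV)$ is, by \eqref{eq-defC}, a difference of an additive integral functional and an additive sum functional, hence $F_\Lambda:=C_\Lambda(\cdot;h_k,\ParV)$ and likewise its $\ParV$-derivatives are additive and shift-invariant in the sense of Lemma \ref{ergodiclemma}; integrability is exactly what \textbf{[C1]}, \textbf{[C4]}, \textbf{[N4]} provide. Applying Lemma \ref{ergodiclemma} (valid since $P_{\ParVT}$ is assumed ergodic here) gives $|\Lambda_n|^{-1}C_{\Lambda_n}(\varphi;h_k,\ParVT)\to \Esp(h_k(0^M,\Phi;\ParVT)e^{-V(0^M|\Phi;\ParVT)})-\Esp(h_k(0^M,\Phi\setminus 0^M;\ParVT))=0$ by the GNZ formula \eqref{GNZ}; similarly $|\Lambda_n|^{-1}\partial_{\theta_i}C_{\Lambda_n}(\varphi;h_k,\ParVT)$ converges, and differentiating the integrand of \eqref{eq-defC} in $\theta_i$ shows the limit equals $-\bigl(\Mat{\mathcal E}(\Vect h,\ParVT)\bigr)_{ik}$ plus a term $\Esp(\partial_{\theta_i}h_k(0^M,\Phi;\ParVT)e^{-V(0^M|\Phi;\ParVT)})-\Esp(\partial_{\theta_i}h_k(0^M,\Phi\setminus 0^M;\ParVT))$ which vanishes by GNZ applied to the test function $\partial_{\theta_i}h_k$. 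Combining, $|\Lambda_n|^{-2}(\Vect C_n^{(1)})^T\Vect C_n\to 0$, $|\Lambda_n|^{-2}(\Vect C_n^{(1)})^T\Vect C_n^{(1)}\to \Mat{\mathcal E}(\Vect h,\ParVT)\tr{\Mat{\mathcal E}(\Vect h,\ParVT)}$, and (using \textbf{[N4]} to bound the Hessian terms uniformly on $\mathcal V(\ParVT)$ and consistency of $\widehat{\ParV}_n$) $R_n\to 0$ a.s. This yields the deterministic matrix $\Mat{\mathcal E}\tr{\Mat{\mathcal E}}$ premultiplying $|\Lambda_n|^{1/2}(\widehat{\ParV}_n-\ParVT)$ in \eqref{conv-norm}.

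\textbf{The central limit theorem, the main obstacle.} The core of the proof is showing
\begin{equation*}
|\Lambda_n|^{-1/2}\,\Vect C_{\Lambda_n}(\Phi;\Vect h,\ParVT)\ \stackrel{d}{\longrightarrow}\ \mathcal N\bigl(0,\Mat\Sigma(\Vect h,\ParVT)\bigr),
\end{equation*}
with $\Mat\Sigma$ as in \eqref{matC}. The plan is to decompose $\Vect C_{\Lambda_n}(\Phi;\Vect h,\ParVT)=\sum_{k\in\ZZ[d]:\,\Delta_k(D)\cap\Lambda_n\neq\emptyset}\widetilde{\Vect C}_{\Delta_k(D)}(\Phi;\Vect h,\ParVT)$ into contributions over a grid of cubes of side $D$, exploiting the additivity of $C_\Lambda$ in $\Lambda$ and the finite-range property \textbf{[N3]} (so that $\widetilde{\Vect C}_{\Delta_k(D)}$ depends only on $\Phi$ in a bounded enlargement of $\Delta_k(D)$, making blocks at lattice distance $\geq 2$ functions of disjoint regions). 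One then invokes the central limit theorem for conditionally centered random fields of Guyon--Künsch / Jensen--Künsch (cited in the introduction as \cite{guyonkunsch92,A-JenKun94}): the key structural fact to verify is that $\{\widetilde{\Vect C}_{\Delta_k(D)}(\Phi;\Vect h,\ParVT)\}_k$ forms a triangular array with a conditional-centering property — namely that $\Esp\bigl(\widetilde{\Vect C}_{\Delta_k(D)}(\Phi;\Vect h,\ParVT)\,\big|\,\Phi \text{ outside } \Delta_k(D)\oplus B(0,D)\bigr)=0$, which follows from the conditional form of the GNZ formula (the conditional density $f_\Lambda$ is precisely designed so that the integral term is the conditional expectation of the sum term). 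The moment condition \textbf{[N1]} ($L^3$ bounds) supplies the Lyapunov-type control, \textbf{[N2]} handles edge effects (boundary cubes have volume $o(|\Lambda_n|)$ and vanishing $L^2$ norm), and the limiting covariance is identified as the sum over $|\ell|\leq 1$ in \eqref{matC} because only neighboring (or coinciding) blocks are correlated. I expect this CLT step — checking the conditional-centering hypothesis rigorously and verifying the mixing/summability conditions of the Jensen--Künsch theorem in the present marked-Gibbs generality — to be the principal technical difficulty; the rest is bookkeeping with Slutsky's lemma to pass from the CLT for $|\Lambda_n|^{-1/2}\Vect C_{\Lambda_n}$ and the a.s.\ limits of the Hessian-type matrices to the stated conclusion \eqref{conv-norm}.
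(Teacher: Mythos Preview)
Your proposal is correct and follows essentially the same route as the paper, which likewise applies a general minimum-contrast CLT (Theorem~3.4.5 of \cite{B-Guy95}) after establishing (i) the ergodic limit $|\Lambda_n|^{-1}\Vect C_{\Lambda_n}^{(1)}(\Phi;h_k,\ParVT)\to -\Vect{\mathcal E}(h_k,\ParVT)$ via exactly the GNZ cancellation you describe, (ii) uniform control of $\Mat U_{\Lambda_n}^{(2)}$ on $\mathcal V(\ParVT)$ from \textbf{[N4]}, and (iii) the block CLT for $|\Lambda_n|^{-1/2}\widetilde{\Vect C}_{\Lambda_n}$ via the Jensen--K\"unsch conditional-centering scheme (the paper packages this as Lemma~\ref{lem-CL} and defers the details to \cite{A-CoeLav10}). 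One small correction to your sketch: the conditional centering you need is $\Esp\bigl(\widetilde{\Vect C}_{\Delta_k(D)}(\Phi;\Vect h,\ParVT)\,\big|\,\Phi_{\Delta_k(D)^c}\bigr)=0$, i.e.\ conditioning on the complement of $\Delta_k(D)$ itself rather than of its $D$-enlargement --- this is precisely what the DLR specification together with the form \eqref{eq-defC} delivers.
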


\begin{remark}
While Assumptions \textbf{[N1-3]} will ensure that a central limit theorem holds for $\Vect{U}_{\Lambda_n}^{(1)}(\Phi;\Vect{h},\ParVT)$, Assumption \textbf{[N4]} is required to prove that $\Mat{U}_{\Lambda_n}^{(2)}(\Phi;\Vect{h},\ParV)$ is uniformly bounded in a neighborhood of $\ParVT$. These two statements allow us to apply a general central limit theorem for minimum contrast estimators ({\it e.g.} Theorem 3.4.5 of \cite{B-Guy95}).
\end{remark}

\begin{remark} \label{est-cov}
In Theorem \ref{prop-norm}, if the Gibbs measure $P_{\ParVT}$ is  stationary (and then not necessarily ergodic), it is a mixture of ergodic measures and the left hand term in (\ref{conv-norm}) converges in distribution to a mixture of normal distributions. 
We may also propose an asymptotic result valid in the presence of a phase transition. Indeed, if the matrix  ${\Mat{\mathcal{E}}(\Vect{h},\ParVT)} \Mat{\Sigma}(\Vect{h},\ParVT) \tr{\Mat{\mathcal{E}}(\Vect{h},\ParVT)}$ is positive definite (for any extremal ergodic measure of the Choquet simplex of stationary Gibbs measures), then we may define a consistent empirical version $S(\Phi)$ of $ \left[{\Mat{\mathcal{E}}(\Vect{h},\ParVT)} \Mat{\Sigma}(\Vect{h},\ParVT) \tr{\Mat{\mathcal{E}}(\Vect{h},\ParVT)}\right]^{-1/2}\Mat{\mathcal{E}}(\Vect{h},\ParVT) \tr{\Mat{\mathcal{E}}(\Vect{h},\ParVT)}$ (see Section~4 of \cite{A-CoeLav10} for more details) to obtain:
\begin{equation*} 
|\Lambda_n|^{1/2} S(\Phi) \left(\widehat{\ParV}_n(\Phi)-\ParVT \right) \stackrel{d}{\rightarrow} \mathcal{N}\left(0, \Mat I \right).
\end{equation*}

\end{remark}

\begin{remark}
Following Section~\ref{sec-classical}, let us underline that~(\ref{conv-norm}) is coherent with the asymptotic normality of the MPLE established in \cite{A-CoeDro09}, i.e. with the case $K=p$ and $\Vect{h}=\Vect{V}^{(1)}$. Indeed, with similar assumptions to the ones presented in the present paper, Equation (4.4) in Theorem~2 \cite{A-CoeDro09} states that 
\begin{equation}\label{norm-MPLE}
|\Lambda_n|^{1/2} \Mat{A}(\ParVT) \left(\widehat{\ParV}_n(\Phi)-\ParVT \right) \stackrel{d}{\rightarrow} \mathcal{N}\left(0, \Mat{\Sigma}(\Vect{V}^{(1)},\ParVT)  \right),
\end{equation}
where $\Mat{A}(\ParVT)$ is the symmetric $(p,p)$ matrix given for $i,k=1,\ldots,p$ by
$$
\left( \Mat{A}(\ParVT) \right)_{ik} = \Esp \left( \left(\Vect{V}^{(1)}(0^M|\Phi;\ParVT)\right)_k \left(\Vect{V}^{(1)}(0^M|\Phi;\ParVT)\right)_i   
e^{-V(0^M|\Phi;\ParVT)}
\right).
$$
Since $\Vect{h}=\Vect{V}^{(1)}$, then $\Mat{\mathcal{E}}(\Vect{V}^{(1)},\ParVT) := \Mat{A}(\ParVT)$ and therefore (\ref{conv-norm}) reduces to
$$
|\Lambda_n|^{1/2} \Mat{A}(\ParVT)^2 \left(\widehat{\ParV}_n(\Phi)-\ParVT \right) \stackrel{d}{\rightarrow} \mathcal{N}\left(0, \Mat{A}(\ParVT) \Mat{\Sigma}(\Vect{V}^{(1)},\ParVT) \Mat{A}(\ParVT) \right)
$$
which is exactly~(\ref{norm-MPLE}) by assuming that $\Mat{A}(\ParVT)$ is invertible.
\end{remark}

\begin{remark}
%Proposition~\ref{prop-norm} suggests the two following questions which are clearly very interesting from a practical point of view~:
%\begin{itemize}
%\item Can one derive data-driven estimates of  $\Mat{\mathcal{E}}(\Vect{h},\ParVT) \tr{\Mat{\mathcal{E}}(\Vect{h},\ParVT)}$ and  ${\Mat{\mathcal{E}}(\Vect{h},\ParVT)} \Mat{\Sigma}(\Vect{h},\ParVT) \tr{\Mat{\mathcal{E}}(\Vect{h},\ParVT)}$ in order to propose asymptotic confidence bands for example~?
%\item 
The question how to choose the test functions in order to minimize  the norm of the asymptotic covariance matrix is difficult to answer, still open and is a perspective for future work.
%\end{itemize}
%The first question may be resolved in two different ways. The first one consists in approximating these matrices by Monte-Carlo methods (by generating point processes with parameter vector $\widehat{\ParV}_n$) or by using empirical estimates. Empirical estimates have been proposed for the MPLE by \cite{A-BilCoeDro08} (Theorem~2 Equation~(4.6)). In \cite{A-CoeLav10} (in particular Section~4)  a more general result is provided for a similar problem and the same procedure may be applied in the present context. \\
%The second question is much more difficult, still open and is a perspective for future work.
\end{remark}

\subsection{Discussion} \label{sec-ass}

The present paragraph is devoted to the discussion of Assumptions \textbf{[Mod]},  \textbf{[C]} (except \textbf{[C3]}) and \textbf{[N]}. In the previous sections, we have expressed the different assumptions in a very general way. Our aim, here, is to make these assumptions concrete for a wide range of models and a wide range of test functions, in order to illustrate that our setting is not restrictive. In particular, we will focus on exponential family models having a local energy of the form:  
\begin{equation}\label{formeexp}
V(x^m|\varphi;\ParV):= \PrSc{\ParV}{\Vect{V}(x^m|\varphi)}=\ParV_1{V}_1(x^m|\varphi)+\ldots+\ParV_p{V}_p(x^m|\varphi),
\end{equation} 
with $
\Vect{V}=(V_1,\ldots, V_p)$ a vector function  from $\sSp\times \Omega(\sSp)$ to $\RR[p]$.

Let us consider the following assumptions: for all $(m,\varphi) \in \mSp\times \Omega$.

\begin{itemize}
\item[\textbf{[Exp]}] For $i=1,\cdots,p$, for all $x\in\RR[d]$, $V_i(x^m|\varphi)=V_i(0^m|\tau_x\varphi)$ and  there exist $\cSEx{\inf}{i},\cSEx{\sup}{i}\geq 0$, $k_i\in \NN, D>0$  such that one of the two following assumptions is satisfied :
$$
\Par_i \geq 0 \mbox{ and } 
-\cSEx{\inf}{i}\leq \SExI{i}{0^{m}}{\varphi}=\SExI{i}{0^{m}} {\varphi_{\mathcal{B}(0,D)}} \leq \cSEx{\sup}{i} |\varphi_{\mathcal{B}(0,D)} |^{k_i}.
$$
or
$$
-\cSEx{\inf}{i}\leq \SExI{i}{0^{m}}{\varphi}=\SExI{i}{0^{m}} {\varphi_{\mathcal{B}(0,D)}} \leq \cSEx{\sup}{i}.
$$
\item[$\widetilde{\mbox{\textbf{[Exp]}}}$] Assumption \textbf{[Exp]} with $k_i=0$ or 1 for all $i$ (when $\theta_i\geq 0$).
\item[\textbf{[H]}] For all $x\in\RR[d]$, $h(x^m,\varphi;\ParV)=h(0^m,\tau_x\varphi;\ParV)$ and there exist ${\kappa}>0$, ${k} \in \NN, D>0$ such that $h(0^m,\varphi;\ParV)=h(0^m,\varphi_{\mathcal{B}(0,D)};\ParV)$, such that $h(0^m,\varphi;\cdot)$ is twice continuously differentiable in $\ParV$ and such that $|Y(\varphi,m)| \leq {\kappa} |\varphi_{\mathcal{B}(0,D)}|^{{k}}$, where
\begin{equation}\label{eq-Y}
Y(\varphi,m):= \max\left( 
|h(0^m,\varphi;\ParV)|, \| \Vect{h}^{(1)}(0^m,\varphi;\ParV) \|, \| \Mat{h}^{(2)}(0^m,\varphi;\ParV) \|  \right).
\end{equation}
\item[$\widetilde{\mbox{\textbf{[H]}}}$] $h(0^m,\varphi;\ParV) = \widetilde{h}(0^m,\varphi;\ParV) e^{\PrSc{\ParV}{\Vect{V}(0^m|\varphi)}}$ with $\widetilde{h}$ satisfying \textbf{[H]}.
\end{itemize}
Let us underline that the different constants involved in these assumptions are assumed to be independent of $m,\varphi,\ParV$. Note also that if the test function $h$ is independent of $\ParV$, $Y(\varphi,m)$ obviously reduces to $|h(0^m,\varphi)|$. \\

These assumptions are  common and very simple to check. Assumption \textbf{[Exp]} has already been investigated in \cite{A-BilCoeDro08}. It includes a wide variety of models such as the overlap area point process, the multi-Strauss marked point process, the $k-$nearest-neighbor multi-Strauss marked point process, the Strauss type disc process, the Geyer's triplet point process, the area process, some special cases of quermass process (for instance when $\lambda^{\mE}$ has a compact support not containing $0$ and $\ParV_4=0$), etc. Among these models, the only one that does not satisfy $\widetilde{\mbox{\textbf{[Exp]}}}$ is the Geyer's triplet point process (see \cite{A-BilCoeDro08}, p.242).

On the other hand, the test functions $h(x^m,\varphi;\ParV)=1$,  $h(x^m,\varphi;\ParV)=\Vect{V}^{(1)}_k(x^m|\varphi;\ParV)=V_k(x^m|\varphi)$, $h(x^m,\varphi;\ParV)=|\varphi_{\mathcal{B}(x,r)}|$ satisfy \textbf{[H]} (for the second one, it is implied by \textbf{[Exp]}).  Note that the functional described by (\ref{def-hkStrauss}) for the Strauss model depending on $\ParV$, the functionals $h_{per}$ and $h_{iso}$ in (\ref{def-hper}), (\ref{def-hiso}) also satisfy \textbf{[H]}. In a similar way, test functions like $e^{\PrSc{\ParV}{\Vect{V}(x^m|\varphi)}}$, $e^{\PrSc{\ParV}{\Vect{V}(x^m|\varphi)}/2}$, $|\varphi_{\mathcal{B}(x,r)}|e^{\PrSc{\ParV}{\Vect{V}(x^m|\varphi)}}$, $\mathbf{1}_{[0,r]}(d(x^m,\varphi)) e^{\PrSc{\ParV}{\Vect{V}(x^m|\varphi)}}$ satisfy $\widetilde{\mbox{\textbf{[H]}}}$.

We  show that most of all the assumptions required in Theorems~\ref{prop-cons} and~\ref{prop-norm} are not too restrictive.

\begin{Proposition}\label{prop-ass}

$(i)$ Under Assumption \textbf{[Exp]}, Assumption \textbf{[Mod]} is fulfilled.\\
$(ii)$ For a test function satisfying \textbf{[H]} (resp. $\widetilde{\mbox{\textbf{[H]}}}$) and a model satisfying \textbf{[Exp]}  (resp. $\widetilde{\mbox{\textbf{[Exp]}}}$), then Assumptions \textbf{[C]} (excepted \textbf{[C3]}) and \textbf{[N]} are fulfilled.
\end{Proposition}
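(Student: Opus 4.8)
The plan is to verify, one assumption at a time, that the structural hypotheses \textbf{[Exp]} and \textbf{[H]} (resp.\ their tilde versions) imply each of the conditions grouped under \textbf{[C]} (except \textbf{[C3]}) and \textbf{[N]}, and separately invoke an existence result from the literature for part $(i)$. For $(i)$, the point is that \textbf{[Exp]} guarantees the local energy $V(0^m|\varphi)$ is bounded below uniformly (by $-\sum_i c^{(i)}_{\inf}|\theta_i|$) and has finite range $D$; such stability plus finite range is exactly the classical setting in which existence of a stationary Gibbs measure is known. I would cite \cite{B-Rue69}, \cite{B-Pre76}, \cite{A-BerBilDro99}, \cite{A-Der05}, \cite{A-DerDroGeo09} as in the existence discussion of Section~\ref{sec-background}, and also point to \cite{A-BilCoeDro08} where \textbf{[Exp]} was already shown to yield \textbf{[Mod]}; compactness of $\SpPar$ then gives the uniformity in $\ParV$.

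\textbf{The core of $(ii)$: controlling moments of a Poisson-like bound.} The single workhorse estimate is the following: under \textbf{[Mod]} with a finite-range stable energy, for every $k\in\NN$ there is a finite constant $c_k$ with $\Esp\!\left(|\Phi_{\mathcal B(0,D)}|^k\, e^{-V(0^M|\Phi;\ParV')}\right)\le c_k$ uniformly for $\ParV'$ in the compact $\SpPar$. This is obtained by applying the GNZ formula \eqref{GNZ} with $h(0^m,\varphi;\ParV)=|\varphi_{\mathcal B(0,D)}|^k e^{V(0^m|\varphi;\ParV')}$ — which converts the weighted left-hand side into $\Esp(|\Phi_{\mathcal B(0,D)}\setminus 0^M|^k)$ — combined with a Ruelle/Georgii bound showing that, for a stable finite-range Gibbs measure, $\Esp(|\Phi_K|^k)<\infty$ for every bounded $K$ and every $k$ (the correlation functions are dominated by those of a dominating Poisson process of intensity $e^{\sum_i c^{(i)}_{\inf}|\theta_i|}$). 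Once this is in hand, every integrability statement in \textbf{[C1]}, \textbf{[C4]}, \textbf{[N1]}, \textbf{[N4]} follows: \textbf{[H]} gives $|h|,\|\Vect h^{(1)}\|,\|\Mat h^{(2)}\|\le \kappa|\varphi_{\mathcal B(0,D)}|^k$, the upper bound in \textbf{[Exp]} gives $e^{-V(0^m|\varphi;\ParV)}\le e^{\sum c^{(i)}_{\inf}|\theta_i|}$ in the pure-stability case and a polynomial-in-$|\varphi_{\mathcal B(0,D)}|$ bound times a constant in the $k_i\ge 1$ case (this is where $\widetilde{\textbf{[Exp]}}/\widetilde{\textbf{[H]}}$ are needed to keep the exponent of the exponential from blowing up when $h$ itself carries a factor $e^{\langle\ParV,\Vect V\rangle}$). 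The third-moment condition \textbf{[N1]} is handled the same way after expanding $C_\Lambda(\Phi;h_k,\ParVT)$ into its integral part and its sum part and bounding each by a polynomial in $|\Phi_{\Lambda\oplus\mathcal B(0,D)}|$; here one uses that $\Lambda$ is bounded (it is $\Delta_0(D)$ in the application) so the relevant region is bounded.

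\textbf{The remaining assumptions.} The shift-invariance halves of \textbf{[C1]}, \textbf{[H]}, \textbf{[Exp]} are immediate from the corresponding hypotheses; continuity/differentiability in $\ParV$ in \textbf{[C2]}, \textbf{[C4]}, \textbf{[N4]} follows because $x^m\mapsto e^{-V(x^m|\varphi;\ParV)}$ is smooth in $\ParV$ (the exponent is linear, resp.\ smooth, in $\ParV$) and $h$ is twice $C^1$ in $\ParV$ by \textbf{[H]}; the finite sums and integrals over the bounded window $\Lambda_n$ in $U_{\Lambda_n}$ preserve continuity, and the dominated-convergence argument needed to differentiate under the integral/expectation is exactly supplied by the uniform bounds just established (the $\max_{\ParV\in\SpPar}$ versions). \textbf{[N3]}, the finite-range property of $C_\Lambda(\varphi;h_k,\ParVT)$, is read off directly: $V(x^m|\varphi;\ParV)$ depends only on $\varphi_{\mathcal B(x,D)}$ by \textbf{[Exp]} and $h_k(x^m,\varphi;\ParV)$ on $\varphi_{\mathcal B(x,D)}$ by \textbf{[H]}, so both the integral over $x\in\Lambda$ and the sum over $x^m\in\varphi_\Lambda$ in \eqref{eq-defC} depend only on $\varphi_{\Lambda\oplus\mathcal B(0,D)}$. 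Finally \textbf{[N2]} is a continuity-of-measure statement: as $\Gamma_n\downarrow$ a null set, $C_{\Gamma_n}(\Phi;h_k,\ParVT)^2\to 0$ pointwise (both terms vanish since $|\Gamma_n|\to0$ and $N_{\Gamma_n}(\Phi)\to0$ a.s.), and it is dominated by a fixed integrable random variable via the $L^3$ bound of \textbf{[N1]} on a fixed enclosing box, so dominated convergence closes it. I expect the genuine technical obstacle to be none of these individually but rather the bookkeeping in the moment bound: verifying carefully that in the $\widetilde{\textbf{[Exp]}}$/$\widetilde{\textbf{[H]}}$ case the product $h\cdot e^{-V}$ produces at worst $e^{\langle\ParVT-\ParV,\Vect V\rangle}$ times a polynomial, whose exponent has a sign that can be absorbed into the Ruelle bound uniformly over the compact $\SpPar$ — this is the step where the distinction between \textbf{[Exp]} and $\widetilde{\textbf{[Exp]}}$ actually bites, and it is the place a reader should check most attentively.
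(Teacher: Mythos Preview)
Your outline for part $(i)$ and the verification of the qualitative assumptions \textbf{[C2]}, \textbf{[N2]}, \textbf{[N3]} are correct and match the paper's proof closely. The divergence is in your ``core workhorse estimate'' for part $(ii)$, and there the GNZ step does not work as you describe it.

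With $h(0^m,\varphi)=|\varphi_{\mathcal B(0,D)}|^k e^{V(0^m|\varphi;\ParV')}$, the left-hand side of \eqref{GNZ} is $\Esp\big(|\Phi_{\mathcal B(0,D)}|^k\, e^{V(0^M|\Phi;\ParV')-V(0^M|\Phi;\ParVT)}\big)$, which is neither the quantity you are trying to bound (it carries $e^{V(\ParV')-V(\ParVT)}$ rather than $e^{-V(\ParV')}$) nor $\Esp(|\Phi_{\mathcal B(0,D)}\setminus 0^M|^k)$ unless $\ParV'=\ParVT$. The GNZ formula only trades an expectation weighted by $e^{-V(\cdot;\ParVT)}$ against a Palm-type expectation; it cannot by itself deliver uniformity over an arbitrary $\ParV'\in\SpPar$.

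The paper's route is more direct and bypasses GNZ here. Local stability under \textbf{[Exp]} gives $e^{-V(0^m|\varphi;\ParV')}\le e^\rho$ for a constant $\rho$ independent of $m,\varphi,\ParV'$ (using compactness of $\SpPar$), so every expectation of the form $\Esp(|h|\,e^{-V(\ParV')})$ is bounded by $e^\rho\,\Esp(|h|)$. The only moment input needed is then the single fact $\Esp(e^{c|\Phi_\Lambda|})<+\infty$ for every bounded $\Lambda$ and every $c\in\RR[]$, quoted from Proposition~11 of \cite{A-BerBilDro08} as a consequence of local stability; this yields both $\Esp(|\Phi_\Lambda|^\alpha)<\infty$ and $\Esp(|\Phi_\Lambda|^\alpha e^{c|\Phi_\Lambda|})<\infty$, covering the \textbf{[H]}/\textbf{[Exp]} and the $\widetilde{\mbox{\textbf{[H]}}}$/$\widetilde{\mbox{\textbf{[Exp]}}}$ cases in one stroke. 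Your Ruelle/Georgii domination by a Poisson process would reach the same moment bounds by a different path and is a legitimate alternative, but it does the work on its own --- the GNZ detour should simply be dropped.

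One point you identified correctly and which the paper makes explicit: without the restriction $k_i\le 1$ in $\widetilde{\mbox{\textbf{[Exp]}}}$, test functions of type $\widetilde{\mbox{\textbf{[H]}}}$ would produce expectations of the form $\Esp(e^{|\Phi_\Lambda|^k})$ with $k>1$, which local stability does \emph{not} control. This is exactly the place where the distinction between \textbf{[Exp]} and $\widetilde{\mbox{\textbf{[Exp]}}}$ bites, as you anticipated.
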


\begin{proof}
$(i)$ From  \textbf{[Exp]}, it is easy to check that  for any fixed $\ParV$, there exists a positive constant $K(\ParV)$ such that   $V(0^m|\varphi;\ParV)\geq -K(\ParV)$. Therefore the local energy is stable for any $\theta$. On the other hand it is finite range. These two properties imply the existence of ergodic measures (see {\it e.g.} \cite{A-BerBilDro99}, Proposition 1). Among them, there exists at least one stationary
measure due to the invariance by translation of the family of energy functions.

$(ii)$ \textbf{[C2]} and \textbf{[N3]} are quite obvious to check. Now, from the stability of the local energy, we have that for all $(m,\varphi)\in \mSp\times \Omega$, $\PrSc{\ParV} {\Vect{V}(0^m|\varphi)}\geq -\rho$ for $\rho<+\infty$, independent of $m,\varphi,\ParV$ (this is possible because $\SpPar$ is compact). Let us also underline that this property ensures that for every $\Lambda\Subset \RR[d]$, every $c\in\RR[]$,  $\Esp (e^{c|\Phi_\Lambda|} ) <+ \infty$ (see {\it e.g.} Proposition~11 of \cite{A-BerBilDro08}), which obviously implies that $\Esp (|\Phi_\Lambda|^\alpha ) <+\infty$ and $\Esp (|\Phi_\Lambda|^\alpha e^{c|\Phi_\Lambda|} )<+\infty$ for every $\alpha>0$. Now, under \textbf{[H]} and \textbf{[Exp]} (or $\widetilde{\mbox{\textbf{[H]}}}$ and $\widetilde{\mbox{\textbf{[Exp]}}}$), the expectations in \textbf{[C1]}, \textbf{[C4]}, \textbf{[N1]} and \textbf{[N4]} are clearly finite. Let us focus on \textbf{[C1]} for example (the justification for the other assumptions is similar). We have for any $\ParV,{\ParV}^\prime \in \SpPar$
$$
\Esp \left( |h(0^M,\Phi;\ParV)| e^{-\PrSc{{{\ParV}^\prime}}{\Vect{V}(0^M|\Phi)}} \right)\hspace*{10cm} $$
$$\hspace*{1.5cm} \leq \left\{
\begin{array}{ll}
e^\rho \Esp\left( |h(0^M,\Phi;\ParV)|\right) \leq c \times e^\rho \Esp \left( |\Phi_{\mathcal{B}(0,D)}|^\alpha \right) & \mbox{under \textbf{[H]} and \textbf{[Exp]}} \\
e^\rho \Esp\left( |\widetilde{h}(0^M,\Phi;\ParV)|e^{\PrSc{{{\ParV}}}{\Vect{V}(0^M|\Phi)}}  \right) \leq c \times e^\rho \Esp\left( |\Phi_{\mathcal{B}(0,D)}|^\alpha e^{c|\Phi_{\mathcal{B}(0,D)}|}\right) &\mbox{ under }\widetilde{\mbox{\textbf{[H]}}} \mbox{ and }\widetilde{\mbox{\textbf{[Exp]}}},
\end{array} \right.
$$
for some constants $\alpha$ and $c$. Note that, if we had not assumed $\widetilde{\mbox{\textbf{[Exp]}}}$ for test functions of the form $\widetilde{\mbox{\textbf{[H]}}}$, then one would have had expectations of the form $\Esp\left( e^{|\Phi_\Lambda|^k} \right)$ for some $k>1$ which is not necessarily finite under the local stability property. Assumption \textbf{[N2]} is proved similarly and by using the dominated convergence theorem.
\end{proof}

\begin{remark}
By following ideas in \cite{A-CoeDro09}, it is possible to fulfill the integrability type assumptions for more complicated models such as the Lennard-Jones model (which is not locally stable and nonlinear in terms of the parameters). The using of Ruelle's estimates \cite{A-Rue70} plays a crucial role in this case of superstable interaction. For the sake of conciseness and simplicity, we do not investigate this in the present paper. 
\end{remark}

%%%%%%%%%%%%%%%%%%%%%%%%%%%%%%%%%%%%%%%%%%%%%%%%%%%%%%%%%%%%%%%%%%%%%%%%%%%%%
%%%%%%%%%%%%%%%%%%%%%%%% Identifiabilité %%%%%%%%%%%%%%%%%%%%%%%%%%%%%%%%%%%%
%%%%%%%%%%%%%%%%%%%%%%%%%%%%%%%%%%%%%%%%%%%%%%%%%%%%%%%%%%%%%%%%%%%%%%%%%%%%%

\section{Identifiability : Assumption \textbf{[C3]}}\label{SectionIdent}

The Assumption \textbf{[C3]}  is related to the identifiability of the estimation procedure. It is more complicated to verify than the other assumptions and an investigation to obtain a criterion or a characterization seems necessary. We address this question in this section.

In the following, we consider that the interaction has an exponential form as in \eqref{formeexp}.
Then \textbf{[C3]} is equivalent to: $\ParV=\ParVT$ is the unique solution of the nonlinear system of equations in $\ParV$ defined by 
\begin{equation}\label{ident}
 \Esp \left( h_k(0^M,\Phi;\ParV) \left(
e^{-\PrSc{\ParV}{\Vect V(0^M|\Phi)}} -e^{-\PrSc{\ParVT}{\Vect V(0^M|\Phi)}}
\right) \right) = 0, \qquad 1\le k \le K.
\end{equation}
If $h_k$ and $\Vect V$ are sufficiently regular, each equation in (\ref{ident}) gives a $(p-1)$-dimensional manifold of solutions in $\SpPar$ containing $\ParVT$. So it is clear that the choice $K\ge p$ is in general necessary to prove that the system (\ref{ident}) admits the unique solution $\ParVT$. 

In Section \ref{pK}, we investigate the delicate case $K=p$ in detail. In opposition to the linear case where $p$ hyperplanes in $\RR[p]$ have in general a unique common point, the intersection of $p$ $(p-1)$-dimensional manifolds does not generally reduce to a single point. So, when $K=p$, there is no guarantee that   (\ref{ident}) has a unique solution $\ParVT$. This is illustrated by a simple example at the beginning of Section \ref{pK}.  In Proposition $\ref{propident}$, we provide a criterion to ensure that the system in (\ref{ident}) admits the only one solution $\ParVT$. Some examples, for which the criterion is available, are presented and the rigidity of the criterion, when $p\ge 3$, is also evoked. In the case where $p=2$, we show that our criterion is not far from being necessary.

% Moreover, in general, the choice $K=p$ is not sufficient. Indeed, in opposition to the linear case where $p$ hyperplanes in $\RR[p]$ have, in general, an unique common point, the intersection of $p$ $(p-1)$-dimensional manifolds is classically not reduced to a single point. In our opinion, this situation is not exotic in the setting of solutions in (\ref{ident}). Let us give a simple example, in the case $K=p=2$, where this situation occurs.

The case $K>p$ is studied in Section \ref{Kp}. The identification problem should be simpler since, in general, $p+1\;$ $(p-1)$-dimensional manifolds in $\RR[p]$ have no common point. We give a sufficient criterion to prove the identification but we think that it is far from being necessary.

% In the case $K=p$, assumptions \textbf{[Det]} and \textbf{[Id]} in Proposition \ref{propident} below ensure the dispersion around $\ParVT$ of manifolds of solutions coming from (\ref{ident}). Roughly speaking, it means that the distance, between solutions of each equation in (\ref{ident}), is increasing with respect to their distance with $\ParVT$. So our infinitesimal criterion implies that $\ParVT$ is the unique solution of system (\ref{ident}).

% In the Case $K>p$, the identification problem should be simple since, in general, $p+1$ $(p-1)$-dimensional manifolds in $\RR[p]$ have no common point. Obviously this property is not true in all situations but the cases, where it does not occur, should be exceptional. For example, if $p=2$ and $K=3$, three arbitrary curves in $\RR[2]$, having a common point $\ParVT$, may intersect each other several times but in general these intersection points involve only both of these three curves. Therefore $\ParVT$ should be the unique intersection point of the three curves.

% In Sections \ref{pK} and \ref{Kp} we investigate the cases $K=p$ and $K>p$ for an energy function $V$ having an exponential form (\ref{}). We present criterion to prove \textbf{[C3]} and comment their powerful. Proposition \ref{rigid} shows that the case $p=2$ is particularly interesting. Section \ref{p2} is devoted at this setting.

Before presenting these two sections, let us give further notation. We denote by $P_V$ the law of $\Vect V(0^M,\Phi)$ in $\RR[p]$. We also define  the function $\Psi_\ParV$, for each $\ParV\in\SpPar$, by 
\begin{equation}\label{Psi}
\begin{array}{cccc}
\Psi_\ParV: & \RR[p] & \longrightarrow  & \RR[K]\\
& \Vect{v} & \longmapsto & 
 \left( 
 \begin{array}{c}
 \Esp\Big(h_1(0^M,\Phi;\ParV) \Big| \Vect V(0^M|\Phi)=\Vect{v}\Big)\\
 \vdots\\
 \Esp\Big(h_K(0^M,\Phi;\ParV) \Big| \Vect V(0^M|\Phi)=\Vect{v} \Big)
 \end{array}\right)
\end{array}.
\end{equation}
We will see that this function plays a crucial role in the identification problem.

%  Let us remark that if $h_i=\psi_i(U)$ then the $i$th coordinate function of $\Psi_\ParV$ is nothing else than $\psi_i$. If $\Psi_\ParV$ can not be computed explicitly from $U$ and $(h_i)$, we think it is possible to approximate it from the data via for example kernel function estimators (Fred peux tu pr�ciser cela comme tu me l'avais expliqu�!). If functions $(h_i)$ do not depend on $\ParV$ we write $\Psi$ in place of $\Psi_\ParV$.

\subsection{The case $K=p$}\label{pK}

First of all, let us give a simple example to show that the identification problem is delicate in the situation where $K=p$.
Let us consider that $K=p=2$, $V_1=1$, and let us choose the simple test functions $h_1=1$ and $h_2=e^{\PrSc{\ParV}{\Vect V(x^m|\varphi)}}$. Then $\tilde\ParV=(\tilde\ParV_1,\tilde\ParV_2)$ with $\tilde\ParV_1=\ParVT_1-\ln(\Esp(e^{-\ParVT_2 V_2(0^M|\Phi)}))$ and $\tilde\ParV_2=0$ is always a solution of the system in (\ref{ident}). Therefore if $\ParVT_2\neq 0$ and if $\tilde\ParV$ defined before is in $\SpPar$, then the system  in (\ref{ident}) admits at least two solutions.

In the following, we first give a sufficient criterion to prove the identifiability and propose some examples. Next, we show the rigidity of our criterion which seems constraining when $p\ge 3$. 

\subsubsection{Criterion for identifiability}

Assumption \Det gathers the two following assumptions:
\begin{itemize}
\item[{\Det[($\neq$)]}] For every $\ParV$ in $\SpPar$, $det(\Vect{v}_1,\ldots,\Vect{v}_p)det\big(\Psi_\ParV(\Vect{v}_1),\ldots,\Psi_\ParV(\Vect{v}_p)\big)$ is not $(P_V)^{\otimes p}$-a.s. identically null
\item[{\Det[($\geq$)]}] For every $\ParV$ in $\SpPar$, there exists $\epsilon=\pm 1$ such that for $(P_V)^{\otimes p}$-a.s. every $(\Vect{v}_1,\ldots,\Vect{v}_p)$ in $(\RR[p])^p$
$$ \epsilon \;det(\Vect{v}_1,\ldots,\Vect{v}_p)det\big(\Psi_\ParV(\Vect{v}_1),\ldots,\Psi_\ParV(\Vect{v}_p)\big)\ge 0.$$
\end{itemize}
When $\epsilon=1$ (respectively $\epsilon=-1$), \Det[($\geq$)] means that $\Psi_\ParV$ preserves the sign (respectively the opposite sign) of the determinant.

The criterion is the following.
\begin{Proposition}\label{propident}
If $K=p$ then Assumption \Det ensures that Assumption \textbf{[C3]} holds.
\end{Proposition}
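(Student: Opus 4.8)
The plan is to reformulate Assumption \textbf{[C3]} as an integral identity and then show that Assumption \Det forces this identity to hold only at $\ParV=\ParVT$. Starting from \eqref{ident}, I would rewrite each of the $p$ equations as an integral with respect to $P_V$, the law of $\Vect V(0^M,\Phi)$. Using the tower property to condition on $\Vect V(0^M|\Phi)=\Vect v$, the $k$-th equation becomes
$$
\int_{\RR[p]} \left(\Psi_\ParV(\Vect v)\right)_k \left( e^{-\PrSc{\ParV}{\Vect v}} - e^{-\PrSc{\ParVT}{\Vect v}} \right) P_V(d\Vect v) = 0, \qquad 1\le k\le p,
$$
where $\Psi_\ParV$ is defined in \eqref{Psi}. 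Writing $g_\ParV(\Vect v) := e^{-\PrSc{\ParV}{\Vect v}} - e^{-\PrSc{\ParVT}{\Vect v}}$, this says precisely that the $\RR[p]$-valued integral $\int \Psi_\ParV(\Vect v)\, g_\ParV(\Vect v)\, P_V(d\Vect v)$ is the zero vector.

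The second step is a determinant/antisymmetrization argument. Suppose, for contradiction, that for some $\ParV\neq\ParVT$ the above vector integral vanishes. Then for $(P_V)^{\otimes p}$-almost every $(\Vect v_1,\ldots,\Vect v_p)$ the $p$ vectors $\Psi_\ParV(\Vect v_1),\ldots,\Psi_\ParV(\Vect v_p)$ together with the vanishing integral give, after expanding a $(p+1)$-fold or $p$-fold determinant, an identity of the form
$$
\int_{(\RR[p])^p} \det\big(\Psi_\ParV(\Vect v_1),\ldots,\Psi_\ParV(\Vect v_p)\big)\, \det(\Vect v_1,\ldots,\Vect v_p)\, \prod_{i=1}^p \mu_\ParV(d\Vect v_i) = 0,
$$
for an appropriate (signed) measure built from $P_V$ and $g_\ParV$; more carefully, one replaces one of the columns by the vanishing integral and uses multilinearity of the determinant, so that the integrand carries the product $\det(\Vect v_1,\ldots,\Vect v_p)\det(\Psi_\ParV(\Vect v_1),\ldots,\Psi_\ParV(\Vect v_p))$ times a product of sign-definite scalar factors coming from $g_\ParV$. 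Here the key elementary fact is that $\Vect v \mapsto g_\ParV(\Vect v) = e^{-\PrSc{\ParV}{\Vect v}} - e^{-\PrSc{\ParVT}{\Vect v}}$ changes sign across the hyperplane $\{\Vect v : \PrSc{\ParV}{\Vect v} = \PrSc{\ParVT}{\Vect v}\}$ and has constant sign on each side — so after a suitable change of variables or a sign bookkeeping, the scalar weights can be arranged to be nonnegative. Then \Det[($\geq$)] says the determinant product has constant sign $\epsilon$ over the support, and \Det[($\neq$)] says it is not a.s.\ zero; hence the integral is strictly of sign $\epsilon$ (nonzero), contradicting that it equals $0$. Therefore no such $\ParV\neq\ParVT$ exists, i.e.\ \textbf{[C3]} holds.

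The main obstacle I anticipate is the bookkeeping in the antisymmetrization step: getting from the $p$ scalar equations in \eqref{ident} to a single determinant integral whose integrand is a product of the two determinants times manifestly sign-definite weights. This requires choosing the right way to insert the vanishing vector integral into a determinant (or passing to a $(p+1)\times(p+1)$ determinant with a constant column), carefully tracking the sign of $g_\ParV$ relative to the separating hyperplane, and justifying that the resulting weight $\prod_i g_\ParV(\Vect v_i)$ (or its symmetrized version over orderings of the $\Vect v_i$) can be taken nonnegative — this is where one really uses that $\{h_k\}$ and $\Vect V$ are regular enough for $\Psi_\ParV$ to be well-defined and for Fubini to apply. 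Once that reduction is in place, the conclusion is immediate from \Det[($\geq$)] and \Det[($\neq$)]. I would also need to check at the outset that all the conditional expectations and integrals in question are finite, which follows from \textbf{[C1]}–\textbf{[C4]} and the local stability bounds discussed after \eqref{formeexp}.
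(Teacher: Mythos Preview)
Your first reduction is fine and matches the paper: conditioning on $\Vect V(0^M|\Phi)=\Vect v$ turns the $k$-th equation into
\[
\int_{\RR[p]} \big(\Psi_\ParV(\Vect v)\big)_k\, g_\ParV(\Vect v)\, P_V(d\Vect v)=0,\qquad g_\ParV(\Vect v)=e^{-\PrSc{\ParV}{\Vect v}}-e^{-\PrSc{\ParVT}{\Vect v}}.
\]
The gap is in the second step. From these $p$ equations you only have that the single vector $\int \Psi_\ParV(\Vect v)\,g_\ParV(\Vect v)\,P_V(d\Vect v)\in\RR[p]$ vanishes. Expanding a determinant with this vector as one column and using multilinearity produces an integrand of the form $\det\big(\Psi_\ParV(\Vect v_1),\ldots,\Psi_\ParV(\Vect v_p)\big)\prod_i g_\ParV(\Vect v_i)$, \emph{not} one carrying the extra factor $\det(\Vect v_1,\ldots,\Vect v_p)$: the $\Vect v_i$ enter only through $\Psi_\ParV$ and the scalar $g_\ParV$, so there is nothing to generate that second determinant. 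Worse, the resulting integral is identically zero by symmetry (you are integrating an alternating function of $(\Vect v_1,\ldots,\Vect v_p)$ against a symmetric measure), so no contradiction is obtained. Your hope that ``$\prod_i g_\ParV(\Vect v_i)$ can be taken nonnegative'' is also not realizable: $g_\ParV$ genuinely changes sign across the hyperplane $\{\PrSc{(\ParVT-\ParV)}{\Vect v}=0\}$, and no reordering of the $\Vect v_i$ fixes this.

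What you are missing is the factorization that simultaneously cures the sign problem and introduces the missing $\Vect v$-column. Write
\[
g_\ParV(\Vect v)=\PrSc{(\ParVT-\ParV)}{\Vect v}\;e^{-\PrSc{\ParVT}{\Vect v}+\zeta\big(\PrSc{(\ParVT-\ParV)}{\Vect v}\big)},\qquad \zeta(x)=\ln\!\frac{e^x-1}{x},
\]
where the second factor is \emph{strictly positive} for all $\Vect v$. The $k$-th equation then reads $\PrSc{(\ParVT-\ParV)}{\Vect X_k(\ParV,\ParVT)}=0$ with
\[
\Vect X_k(\ParV,\ParVT)=\int \big(\Psi_\ParV(\Vect v)\big)_k\,\Vect v\; e^{-\PrSc{\ParVT}{\Vect v}+\zeta(\PrSc{(\ParVT-\ParV)}{\Vect v})}\,P_V(d\Vect v)\in\RR[p].
\]
Now the $\Vect v$ sits as a genuine column, and it suffices that $\big(\Vect X_k\big)_{1\le k\le p}$ are linearly independent. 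Expanding $\det(\Vect X_1,\ldots,\Vect X_p)$ by multilinearity (one integration variable per column) and symmetrizing over $S_p$ exactly as you sketched yields
\[
\det(\Vect X_1,\ldots,\Vect X_p)=\frac{1}{p!}\int\!\cdots\!\int \det(\Vect v_1,\ldots,\Vect v_p)\,\det\big(\Psi_\ParV(\Vect v_1),\ldots,\Psi_\ParV(\Vect v_p)\big)\,\prod_{i=1}^p w(\Vect v_i)\,P_V(d\Vect v_i),
\]
with the strictly positive weight $w(\Vect v)=e^{-\PrSc{\ParVT}{\Vect v}+\zeta(\PrSc{(\ParVT-\ParV)}{\Vect v})}$. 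At this point \Det[($\geq$)] and \Det[($\neq$)] give $\det(\Vect X_1,\ldots,\Vect X_p)\neq 0$, hence $\ParV=\ParVT$. In short, the antisymmetrization idea is right, but it only works after the $\zeta$-factorization that extracts the linear-in-$\Vect v$ part of $g_\ParV$ and leaves a positive scalar weight.
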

\begin{proof}
Denoting by $\zeta$ the real function $x\mapsto  \ln\left(\frac{e^x-1}{x}\right)$ with the convention $\zeta(0)=0$, the equations (\ref{ident}) become 
\begin{equation}\label{systscalar}
 \PrSc{(\ParVT-\ParV)}{\Vect X_k(\ParV,\ParVT)} = 0, \qquad 1\le k \le p,
\end{equation}
where the vector $\Vect X_k(\ParV,\ParVT)$ is defined by
\begin{equation}\label{Xi}
 \Vect X_k(\ParV,\ParVT)=\Esp \left(  h_k(0^M,\Phi;\ParV) e^{-\PrSc{\ParVT}{\Vect V(0^M|\Phi)}}e^{\zeta\big(\PrSc{(\ParVT-\ParV)}{\Vect V(0^M|\Phi)}\big)} \Vect V(0^M|\Phi)\right).
\end{equation}
Therefore the system (\ref{ident}) admits the unique solution $\ParVT$ if the family of vectors $(\Vect X_k(\ParV,\ParVT))_{1\le k \le p}$ is independent in $\RR[p]$. Let us give a formula of the determinant of these vectors which shows that it is not null.

Conditioning by the law of $\Vect V(0^M|\Phi)$ and using the multi-linearity of the determinant, we obtain
\begin{eqnarray*}
& &det(\Vect X_1(\ParV,\ParVT),\dots, \Vect X_p(\ParV,\ParVT))\\
& = &\int\!\cdots\!\int det\left(\Esp\left[h_1(0^M,\Phi;\ParV) e^{-\PrSc{\ParVT}{\Vect v_1}}e^{\zeta\big(\PrSc{(\ParVT-\ParV)}{\Vect v_1}\big)} \Vect v_1 \Big|\Vect V(0^M|\Phi)=\Vect v_1\right],\dots,\right.\\
& &\left.\Esp\left[h_p(0^M,\Phi;\ParV) e^{-\PrSc{\ParVT}{\Vect v_p}}e^{\zeta\big(\PrSc{(\ParVT-\ParV)}{\Vect v_p}\big)} \Vect v_p \Big|\Vect V(0^M|\Phi)=\Vect v_p\right] \right) P_V(d\Vect v_1)\cdots P_V(d\Vect v_p)\\
& = & \int\!\cdots\!\int e^{\sum_{k=1}^p -\PrSc{\ParVT}{\Vect v_k}+\zeta\big(\PrSc{(\ParVT-\ParV)}{\Vect v_k}\big)} det\left(\Vect v_1,\dots,\Vect v_p\right)\prod_{k=1}^p \Esp\left[h_k(0^M,\Phi;\ParV)|\Vect v_k\right]P_V(d\Vect v_1)\cdots P_V(d\Vect v_p)\\
&=&\frac{1}{p!}\sum_{\sigma\in S_p} \int\!\cdots\!\int e^{\sum_{k=1}^p -\PrSc{\ParVT}{\Vect v_{\sigma(k)}}+\zeta\big(\PrSc{(\ParVT-\ParV)}{\Vect v_{\sigma(k)}}\big)} det\left(\Vect v_{\sigma(1)},\dots,\Vect v_{\sigma(p)}\right) \\
& & \prod_{k=1}^p \Esp\left[h_k(0^M,\Phi;\ParV)|\Vect v_{\sigma(k)}\right] P_V(d\Vect v_1)\cdots P_V(d\Vect v_p),
\end{eqnarray*}
where $S_p$ is the set of all permutations in $\{1,\dots,p\}$. Denoting by $\epsilon(\sigma)$ the signature of $\sigma$, we obtain
\begin{eqnarray}\label{formDet}
& & det(\Vect X_1(\ParV,\ParVT),\dots, \Vect X_p(\ParV,\ParVT))\\
& = &\frac{1}{p!} \int\!\cdots\!\int e^{\sum_{k=1}^p -\PrSc{\ParVT}{\Vect v_k}+\zeta\big(\PrSc{(\ParVT-\ParV)}{\Vect v_k}\big)} det\left(\Vect v_1,\dots,\Vect v_p\right) \sum_{\sigma\in S_p} \epsilon(\sigma)\prod_{k=1}^p \Esp\left[h_k(0^M,\Phi;\ParV)|\Vect v_{\sigma(k)}\right] \nonumber\\
& & P_V(d\Vect v_1)\cdots P_V(d\Vect v_p)\nonumber\\
&= &\frac{1}{p!} \int\!\cdots\!\int e^{\sum_{k=1}^p -\PrSc{\ParVT}{\Vect v_k}+\zeta\big(\PrSc{(\ParVT-\ParV)}{\Vect v_k}\big)}det(\Vect{v}_1,\ldots,\Vect{v}_p)det\big(\Psi_\ParV(\Vect{v}_1),\ldots,\Psi_\ParV(\Vect{v}_p)\big) P_V(d\Vect v_1)\cdots P_V(d\Vect v_p).\nonumber
\end{eqnarray}
From Assumption \Det, this determinant is not null. The proposition is proved.
\end{proof}
\bigskip

Now let us give some examples for which the criterion is valid. 

\begin{example}[linear case] If the function $\Psi_\ParV$ is linear and invertible then Assumption \Det[($\geq$)] is clearly satisfied and \Det[($\neq$)] holds as soon as the support of $P_V$ is not included in a hyperplane. In particular, if $h_k=V_k$ for every $1\le k \le p$ then $\Psi_\ParV$ is equal to the identity function. This situation corresponds to the pseudo-likelihood procedure for which we regain the identifiability via our criterion.
\end{example}

\begin{example}[area process] \label{exArea} For the area process defined in Section~\ref{sec-quermass} with $\lambda^{\mE}=\delta_R$ (i.e. the radii of balls are constant), it is easy to check that the functions $h_{per}$ and $h_{iso}$ respectively defined by~\eqref{def-hper} and~\eqref{def-hiso} give a function $\Psi$ which satisfies Assumption \Det. Indeed the support of $P_V$ is the segment $\{1\}\times[0,\pi R^2]$ in $\RR[2]$ and for a vector $\Vect v=(1,v_2)$ the image $\Psi(\Vect v)$ is $(\psi_1(v_2),0)$ if $v_2\neq \pi R^2$ and $(2\pi R,1)$ if $v_2= \pi R^2$. Therefore, it follows that \Det[($\geq$)] is satisfied and noting that $0<P_V((1,\pi R^2))<1$ we deduce that \Det[($\neq$)] holds too. 
\end{example}

\begin{example}[a general example with $\pmb{p=2}$]
Example \ref{exArea} is included in a more general setting when $p=2$. Indeed let us suppose that the function $\Psi_\ParV$ has the form $\Psi_\ParV( v_1, v_2) = \big(g_\ParV(v_1, v_2),g_\ParV(v_1, v_2)f_\ParV(v_2/v_1)\big)$ where $g_\ParV$ is a nonnegative scalar function and $f_\ParV$ is a monotone scalar function. Then $\Psi_\ParV$ satisfies \Det[($\geq$)] and \Det[($\neq$)] holds if $g_\ParV( v_1, v_2)f_\ParV( v_2/ v_1)$ is not $P_V^{\otimes 2}$-a.s. constant when $g_\ParV( v_1, v_2)$ is not null.
\end{example}

\begin{example}[functions of the type $\pmb{h_k e^{\PrSc{\ParV}{\Vect V}} }$] Let us suppose that the functions $(h_k)$ ensure that $\Psi_\ParV$ satisfies \Det[($\geq$)] then for any nonnegative function  $g_\ParV$ from $\RR[p]$ to $\mathbb{R}$ the functions $(\tilde h_k)=\big(g_\ParV(\Vect V)h_k\big)$ also provide a function $\tilde\Psi_\ParV$ satisfying \Det[($\geq$)]. This remark is related to Section \ref{sec-quick} where it is suggested to choose functions $(\tilde h_k)$ of the form $(e^{\PrSc{\ParV}{\Vect V}} h_k)$ to simplify the integral in (\ref{eq-defC}).\\
As an immediate consequence, the test functions $\big(V_k e^{\PrSc{\ParV}{\Vect{V}}}\big)$, considered in \cite{A-Billiot97} for the particular multi-Strauss point process, satisfy \Det[($\geq$)].
\end{example}

\subsubsection{Rigidity of the criterion}

In this section, we give some comments about the rigidity of the criterion. In  Proposition \ref{rigid} below, we show that a function $\Psi_\ParV$, satisfying \Det[($\geq$)], has a strong linear structure since, under very reasonable assumptions, the image of any hyperplane is included in a hyperplane. For example, in the classical setting where  $V_1=1$, the function $\Psi_\ParV$ is defined from the affine space  $H=\{1\}\times\RR[p-1]$ and if $\Psi_\ParV$ is assumed to be continuous then the image of any $p-2$ dimensional affine space in $H$ is included in a hyperplane. This property clearly shows that $\Psi_\ParV$ is very rigid when $p\ge 3$.

However, when $p=2$, we show in Proposition \ref{propnonident} that our criterion is not far from being necessary. Indeed, we present a large class of examples which do not satisfy our criterion and for which the identifiability fails.

\begin{Proposition}\label{rigid}
Let $\Psi$ be a continuous function from $\mathcal{D}$ to $\RR[p]$ satisfying \Det[($\geq$)], where the domain $\mathcal{D}$ is a subset of $\RR[p]$ with the following property: for any $(x_i)_{1\le i\le p} \in \mathcal{D}^p$ such that $det(x_1,\dots,x_p)=0$, then for any neighborhood $\mathcal{V}$ of $(x_i)$, there exist $(x^+_i)$ and $(x^-_i)$ in $\mathcal{V}\cap\mathcal{D}^p$ such that $det(x^+_1,\dots,x^+_p)>0$ and $det(x^-_1,\dots,x^-_p)<0$.
Then for any hyperplane $H$ in $\RR[p]$ the image $\Psi(H\cap \mathcal{D})$ is included in a hyperplane of $\RR[p]$.
\end{Proposition}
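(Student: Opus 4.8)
The plan is to argue by contradiction: suppose $H$ is a hyperplane with $\Psi(H\cap\mathcal D)$ not contained in any hyperplane of $\RR[p]$. Then one can extract from $H\cap\mathcal D$ a collection of points $x_1,\dots,x_p$ lying in $H$ (hence with $\det(x_1,\dots,x_p)=0$, assuming $H$ passes through the origin; the affine case is handled by the same device applied to the linear span) whose images $\Psi(x_1),\dots,\Psi(x_p)$ are linearly independent, i.e. $\det\big(\Psi(x_1),\dots,\Psi(x_p)\big)\neq 0$. The idea is then to perturb: by the structural hypothesis on $\mathcal D$, any neighborhood $\mathcal V$ of $(x_1,\dots,x_p)$ contains tuples $(x_1^+,\dots,x_p^+)$ and $(x_1^-,\dots,x_p^-)$ in $\mathcal D^p$ with $\det(x_1^+,\dots,x_p^+)>0$ and $\det(x_1^-,\dots,x_p^-)<0$. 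Since $\Psi$ is continuous and $\det$ is continuous, shrinking $\mathcal V$ keeps $\det\big(\Psi(x_i^{\pm})\big)$ close to the nonzero value $\det\big(\Psi(x_i)\big)$, hence of a fixed sign $s\in\{+1,-1\}$ on the whole neighborhood. Consequently the product $\det(x_1^+,\dots,x_p^+)\,\det\big(\Psi(x_1^+),\dots,\Psi(x_p^+)\big)$ has sign $s$ while $\det(x_1^-,\dots,x_p^-)\,\det\big(\Psi(x_1^-),\dots,\Psi(x_p^-)\big)$ has sign $-s$. This contradicts \Det[($\geq$)], which forces a single $\epsilon=\pm1$ with $\epsilon\,\det(\Vect v_1,\dots,\Vect v_p)\det\big(\Psi(\Vect v_1),\dots,\Psi(\Vect v_p)\big)\ge 0$ everywhere on $\mathcal D^p$.

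The one subtlety is the reduction from \Det[($\geq$)], which a priori is only a $(P_V)^{\otimes p}$-almost-sure statement, to a pointwise statement on $\mathcal D^p$ that can be combined with continuity. In the context of Proposition \ref{rigid}, $\Psi$ and $\mathcal D$ are given abstractly and \Det[($\geq$)] should be read as holding for every $(\Vect v_1,\dots,\Vect v_p)\in\mathcal D^p$ (equivalently, one takes $P_V$ to have full support $\mathcal D$, or simply restricts attention to the support), so this point is a matter of how the hypothesis is quoted; I would state explicitly at the outset that \Det[($\geq$)] is used here in its pointwise form on $\mathcal D^p$. Granting that, the continuity argument above goes through verbatim.

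It remains to justify the extraction of the points $x_1,\dots,x_p\in H\cap\mathcal D$ with independent images. If no such tuple existed, then for every choice of $p$ points of $H\cap\mathcal D$ the images would be dependent, i.e. $\Psi(H\cap\mathcal D)$ would span a subspace of dimension $\le p-1$, hence lie in a hyperplane — exactly the conclusion we want. So the contradiction hypothesis gives us the tuple, and we are done. I would also note that when $H$ is an affine hyperplane not through the origin (the relevant case $V_1=1$, $H=\{1\}\times\RR[p-1]$), the determinants $\det(x_1,\dots,x_p)$ of points of $H$ need not vanish; one instead works with $p$ points that are affinely dependent within $H$, which still makes $\det(x_1,\dots,x_p)=0$ because affine dependence of $p$ points in a $(p-1)$-dimensional affine space forces the $p\times p$ matrix of their coordinates to be singular. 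The main obstacle is thus not any single deep step but getting this bookkeeping right: matching the almost-sure hypothesis to a pointwise one, and correctly handling the vanishing-determinant condition in both the linear and affine settings so that the perturbation hypothesis on $\mathcal D$ is applicable.
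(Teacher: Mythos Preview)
Your proposal is correct and follows exactly the paper's approach: argue by contradiction, extract $p$ points $x_1,\dots,x_p\in H\cap\mathcal D$ with $\det\big(\Psi(x_1),\dots,\Psi(x_p)\big)\neq 0$ (while $\det(x_1,\dots,x_p)=0$ since $\dim H=p-1$), then perturb using the structural hypothesis on $\mathcal D$ and continuity of $\Psi$ to produce tuples on which the product of determinants takes both signs, contradicting \Det[($\geq$)]. Note that in the paper $H$ is a \emph{linear} hyperplane, so your affine digression is unnecessary---and slightly off, since $p$ points in a $(p-1)$-dimensional affine space not through the origin are generically affinely independent with nonzero $p\times p$ determinant, so that case would not reduce to the same vanishing condition.
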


\begin{proof}
Let $H$ be a hyperplane in $\RR[p]$. To prove that $\Psi(H\cap \mathcal{D})$ is included in a hyperplane, it is sufficient to prove that the dimension of the vectorial space generated by the vectors in $\Psi(H\cap \mathcal{D})$ is not equal to $p$. Let us suppose that it is equal to $p$, then there exists $(x_i)_{1\le i\le p}$ in $(H\cap\mathcal{D})^p$
such that $det(\Psi(x_1),\dots,\Psi(x_p))\neq 0$. Since $dim(H)=p-1$, we have $det(x_1,\dots,x_p)=0$. By continuity of $\Psi$ and by the local properties of $\mathcal{D}$ assumed in Proposition \ref{rigid}, we find $(x^+_i)_{1\le i\le p}$ and $(x^-_i)_{1\le i\le p}$ in $\mathcal{D}^p$ such that $det(x^+_1,\dots,x^+_p)det(\Psi(x^+_1),\dots,\Psi(x^+_p))>0$ and $det(x^-_1,\dots,x^-_p)det(\Psi(x^-_1),\dots,\Psi(x^-_p))<0$, which contradicts Assumption \Det[($\geq$)].
\end{proof}
\bigskip

In the case where $\mathcal{D}=\RR[p]$, if we assume that $\Psi(\RR[p])$ is not reduced to a hyperplane and that $\Psi$ is differentiable at the origin, then we can show that $\Psi$ satisfies \Det[($\geq$)] if and only if $\Psi(x)=g(x)Ax$, where $A$ is an invertible matrix and $g$ a nonnegative  scalar function. It means that $\Psi$ is quasi linear and so the rigidity of $\Psi$ is very strong.

Now let us focus on the case where $p=2$ and let us show that, while our criterion seems very constraining, it is not far from being necessary in this case. We suppose that $p=2$, $V_1=1$ and that the support of $V_2$ is included in an interval $[a,b]$. Let us remark that this case occurs for the area process with $[a,b]=[0,\pi R^2]$. First of all, it is easy to check visually, depending on the geometry of $\gamma_\ParV$ defined by the curve $\Psi_\ParV(\{1\}\times[a,b])$, whether $\Psi_\ParV$ satisfies \Det (see figure \ref{graphPsi} for examples).

Moreover let us show that the criterion is not far from being necessary. 
%Suppose that the functions $(h_i)$ do not depend on $\ParV$ and that $\Psi=\Psi_\ParV$ satisfies for $\epsilon=\pm 1$
%{\bf i)} There exists $\delta>0$ such that for $P_V^{\otimes 2}$-a.s. every $(\Vect v_1,\Vect v_2)$ in $(\{1\}\times[a,a+\delta])^2$  
%$$\epsilon\; det\left(\Vect v_1,\Vect v_2\right) \left(\Psi(\Vect v_1),\Psi(\Vect v_2)\right)\ge 0$$
%and $P_V^{\otimes 2}$-a.s. non identically null. Similarly we suppose 
%{\bf ii)} There exists $\delta>0$ such that for $P_V^{\otimes 2}$-a.s. every $(\Vect v_1,\Vect v_2)$ in $(\{1\}\times[b-\delta,b])^2$ 
%$$\epsilon\; det\left(\Vect v_1,\Vect v_2\right) \left(\Psi(\Vect v_1),\Psi(\Vect v_2)\right)\le 0$$
%and $P_V^{\otimes 2}$-a.s. non identically null.
Suppose that the functions $(h_i)$ do not depend on $\ParV$ and that $\Psi:=\Psi_\ParV$ satisfies for $\epsilon=\pm 1$ Assumption \DetT decomposed into the three following assumptions:
\begin{itemize}
\item[{\DetT[($\neq$)]}] $det\left(\Vect v_1,\Vect v_2\right) det\left(\Psi(\Vect v_1),\Psi(\Vect v_2)\right)$ is not $P_V^{\otimes 2}$-a.s. identically null
\item[{\DetT[($\geq$)]}] there exists $\delta>0$ such that for $P_V^{\otimes 2}$-a.s. every $(\Vect v_1,\Vect v_2)$ in $(\{1\}\times[a,a+\delta])^2$  
$$\epsilon\; det\left(\Vect v_1,\Vect v_2\right) det\left(\Psi(\Vect v_1),\Psi(\Vect v_2)\right)\ge 0$$
\item[{\DetT[($\leq$)]}] there exists $\delta>0$ such that for $P_V^{\otimes 2}$-a.s. every $(\Vect v_1,\Vect v_2)$ in $(\{1\}\times[b-\delta,b])^2$ 
$$\epsilon\; det\left(\Vect v_1,\Vect v_2\right) det\left(\Psi(\Vect v_1),\Psi(\Vect v_2)\right)\le 0.$$
\end{itemize}
See Figure \ref{graphPsi} for an example of such $\Psi$. Obviously, this situation is not exactly the opposite of Assumption {\bf [Det]}, but it is strongly related to it. Then, we have the following proposition which proves that the identifiability fails for this large class of examples. 

%{\bf iii)} The functions $h_1$ et $h_2$ are nonnegative.

\begin{Proposition}\label{propnonident}
If the functions $(h_i)$ are nonnegative, if $\Psi$ satisfies \DetT and  if $det\Big( E_{P_V}(\Psi(\Vect v)\tr{\Vect v })\Big)\neq 0$ then \textbf{[C3]} fails.
\end{Proposition}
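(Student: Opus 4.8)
The plan is to collapse the identification system (\ref{ident}) to a one–dimensional problem and then to use $\widetilde{\textbf{[Det]}}$ to prove that the resulting scalar function is not monotone. Since $V_1\equiv 1$ and the $h_i$ do not depend on $\ParV$, set $L_k(t):=\Esp\big(h_k(0^M,\Phi)\,e^{-t\,V_2(0^M|\Phi)}\big)=\int \psi_k(v)\,e^{-tv}\,P_{V_2}(dv)$ for $k=1,2$, where $\psi_k(v):=\Esp\big(h_k(0^M,\Phi)\mid V_2(0^M|\Phi)=v\big)\ge 0$ is the $k$-th coordinate of $\Psi$ (viewed, since $v_1=1$, as a function of the scalar second argument, so $\Psi(1,v)=(\psi_1(v),\psi_2(v))$) and $P_{V_2}$ is the second marginal of $P_V$, supported in $[a,b]$. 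If some $h_k$ is $P_{\ParVT}$-a.s. zero, (\ref{ident}) has a whole curve of solutions and \textbf{[C3]} fails at once; so we may assume $L_1,L_2>0$ on $\RR[]$ (here nonnegativity of the $h_i$ is used). Equation $k$ of (\ref{ident}) then reads $e^{-\ParV_1}L_k(\ParV_2)=e^{-\ParVT_1}L_k(\ParVT_2)$; the only solution with $\ParV_2=\ParVT_2$ is $\ParVT$, and dividing the $k=1$ and $k=2$ equations shows that (\ref{ident}) has a solution $\ParV\ne\ParVT$ if and only if the ratio $R(t):=L_1(t)/L_2(t)$ satisfies $R(\ParV_2)=R(\ParVT_2)$ for some $\ParV_2\ne\ParVT_2$, the coordinate $\ParV_1$ being then forced. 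Hence it suffices to prove that $R$ is not injective and that $R(\ParVT_2)$ is one of its multiply-attained values.

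To analyse $R$, I would write $R(s)=\Esp_{\mu_s}\!\big(r(V_2)\big)$ with $r(v):=\psi_1(v)/\psi_2(v)$ and $\mu_s(dv)\propto\psi_2(v)\,e^{-sv}\,P_{V_2}(dv)$, so that $R'(s)=-\mathrm{Cov}_{\mu_s}\!\big(r(V_2),V_2\big)$. Writing $\Vect v_1=(1,u_1)$ and $\Vect v_2=(1,u_2)$, the inequalities of \DetT[($\geq$)] and \DetT[($\leq$)], divided by $\psi_2(u_1)\psi_2(u_2)>0$, become sign conditions on the slopes of $r$: $\epsilon$ times the slope of $r$ is $\le 0$ for arguments near $a$ and $\ge 0$ for arguments near $b$, i.e. $r$ is monotone near $a$ and oppositely monotone near $b$, hence non-monotone; \DetT[($\neq$)] gives that it is non-constant. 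Since $\mu_s$ concentrates near the left (resp. right) end of $\mathrm{supp}(P_{V_2})$ as $s\to+\infty$ (resp. $s\to-\infty$), $\mathrm{Cov}_{\mu_s}(r(V_2),V_2)$ has for $|s|$ large a fixed sign, opposite at the two ends, so $R$ is strictly monotone in opposite directions at $\pm\infty$, with finite limits $r(a^+)$ and $r(b^-)$; it is therefore non-injective. Concretely, in the case $\epsilon=1$, $R$ decreases from $r(b^-)$ to a strict interior minimum and then increases to $r(a^+)$, whence $R(t)=c$ has two solutions for every $c$ strictly between $\min R$ and $\min\!\big(r(a^+),r(b^-)\big)$ (symmetrically for $\epsilon=-1$).

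It remains to certify that $c:=R(\ParVT_2)$ lies in that doubly-attained range, and this is where the last hypothesis enters. The condition $\det\!\big(E_{P_V}(\Psi(\Vect v)\tr{\Vect v})\big)\ne 0$ is a non-degeneracy statement on $\Psi$ relative to $P_V$ (it says that the columns $E_{P_V}(\Psi(\Vect v))$ and $E_{P_V}(\Psi(\Vect v)\,v_2)$ are independent, which rules out the image of $\Psi$ lying on a ray, hence rules out $R$ being constant, and more precisely controls the position of $R(\ParVT_2)$ relative to the endpoint values $r(a^+),r(b^-)$); together with the endpoint sign information of \DetT it forces $c=R(\ParVT_2)$ to fall strictly between the extremal value of $R$ and $\min\!\big(r(a^+),r(b^-)\big)$ — in particular it excludes the degenerate case in which $\ParVT_2$ coincides with the extremum of $R$, where uniqueness could otherwise survive. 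Granting this, pick $\ParV_2\ne\ParVT_2$ with $R(\ParV_2)=R(\ParVT_2)$ and set $\ParV_1:=\ParVT_1+\ln\!\big(L_1(\ParV_2)/L_1(\ParVT_2)\big)$; then $\ParV\ne\ParVT$ solves (\ref{ident}), so \textbf{[C3]} fails. The hard part — the technical heart of the argument — is exactly this last step: extracting, from the purely local sign information of \DetT near $a$ and $b$ together with the single non-degeneracy $\det\!\big(E_{P_V}(\Psi(\Vect v)\tr{\Vect v})\big)\ne 0$, enough control on the global shape of $s\mapsto L_1(s)/L_2(s)$ — via a Laplace-type asymptotic analysis as $s\to\pm\infty$ and an intermediate-value argument — to guarantee that $R(\ParVT_2)$ is multiply attained rather than the isolated extremum.
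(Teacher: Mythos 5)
Your reduction of (\ref{ident}) to the scalar ratio $R(t)=L_1(t)/L_2(t)$ of Laplace transforms is a legitimate reformulation and a genuinely different route from the paper's, but the proof has a real gap exactly at the step you yourself flag with ``granting this'': you never establish that the level $R(\ParVT_2)$ is attained at a second point. Your appeal to $det\big(E_{P_V}(\Psi(\Vect v)\tr{\Vect v})\big)\neq 0$ cannot do that job: in your coordinates, the way this hypothesis is actually exploited (to guarantee $det(\Vect X_1(\ParVT,\ParVT),\Vect X_2(\ParVT,\ParVT))\neq 0$) amounts to nothing more than $R'(\ParVT_2)\neq 0$, since $\Vect X_k(\ParVT,\ParVT)=e^{-\ParVT_1}\big(L_k(\ParVT_2),-L_k'(\ParVT_2)\big)$ and hence that determinant equals $e^{-2\ParVT_1}L_2(\ParVT_2)^2R'(\ParVT_2)$. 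Non-criticality of $\ParVT_2$ does not place $R(\ParVT_2)$ in a doubly-attained range: if, say, $\epsilon=1$ and $r(b^-)\le R(\ParVT_2)<r(a^+)$, nothing you prove excludes that $R$ crosses this level exactly once. Moreover the shape of $R$ you invoke is itself not justified: \DetT[($\geq$)] and \DetT[($\leq$)] are only non-strict sign conditions near $a$ and $b$, and \DetT[($\neq$)] is a global (not endpoint-localized) non-nullity, so ``$R$ strictly monotone in opposite directions at $\pm\infty$'' does not follow (e.g. $r$ constant near both endpoints satisfies \DetT near the endpoints vacuously). There are also smaller untreated points: translating \DetT into monotonicity of $r=\psi_1/\psi_2$ needs $\psi_2>0$ on the relevant sets (nonnegativity of $h_2$ and $L_2>0$ only give $\psi_2\ge0$), the identity $R(s)=\Esp_{\mu_s}(r(V_2))$ needs $\psi_1=r\psi_2$ $P_V$-a.e., and the limits $r(a^+),r(b^-)$ need not exist since $\Psi$ is not assumed continuous in this proposition.

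For comparison, the paper stays two-dimensional and closes precisely this step differently: using (\ref{detcoupe}) and the asymptotics of $\zeta$, it shows $det(\Vect X_1(\ParV,\ParVT),\Vect X_2(\ParV,\ParVT))$ has sign $\epsilon$ far out along directions $\Vect u\in\mathcal O^+$ and sign $-\epsilon$ along $\mathcal O^-$, hence obtains a continuous curve $\ParV(t)=\ParVT+\Vect u(t)$ of zeros of the determinant crossing the cone $\mathcal O$; nonnegativity of the $h_i$ makes $\Vect X_1(\ParV(t),\ParVT)$ collinear to a vector of $\{1\}\times[a,b]$, so $t\mapsto\PrSc{\Vect X_1(\ParV(t),\ParVT)}{\Vect u(t)}$ changes sign along the crossing and vanishes at some $t_0$; the vanishing determinant then forces $\Vect u(t_0)\perp\Vect X_2(\ParV(t_0),\ParVT)$ as well, so $\ParV(t_0)$ solves (\ref{systscalar}). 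The determinant hypothesis serves only to ensure $\Vect u(t)\neq 0$, i.e. that the exhibited solution is distinct from $\ParVT$ --- not to locate $R(\ParVT_2)$ relative to endpoint values, as your sketch hoped. To salvage your scalar approach you would need an analogue of this sign-change-at-infinity argument for $R$ together with strict control near the endpoints, which is exactly the part left unproved.
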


Let us note that even if the assumption $det\Big( E_{P_V}(\Psi(\Vect v)\tr{\Vect v})\Big)\neq 0$ seems unnatural, it is in general satisfied.
 
\begin{proof}
Let us show that (\ref{ident}) admits another solution than $\ParVT$. We only give here the main lines of the proof. 

We denote by $\mathcal{O}$ the set in $\RR[2]$ containing the vectors $\Vect u$ which are orthogonal to at least one vector $\Vect v$ in $\{1\}\times[a,b]$, i.e. $\mathcal{O}=\{\Vect u\in \RR[2], \exists \Vect v\in \{1\}\times[a,b], \PrSc{\Vect u}{\Vect v}=0\}$. In fact, $\mathcal O$ is the union of a cone $\mathcal O^+$ in the upper half plane  and a cone $\mathcal O^-$ in the lower half plane. For any $\delta>0$, the expression of the determinant in (\ref{formDet}) can be split in two parts
\begin{eqnarray}\label{detcoupe}
& & det(\Vect X_1(\ParV,\ParVT), \Vect X_2(\ParV,\ParVT))\\
&= &\frac{1}{2} \int\int_{[a,a+\delta]^2} e^{\sum_{k=1}^2 -\PrSc{\ParVT}{\Vect v_k}+\zeta\big(\PrSc{(\ParVT-\ParV)}{\Vect v_k}\big)}det(\Vect{v}_1,\Vect{v}_2)det\big(\Psi(\Vect{v}_1),\Psi(\Vect{v}_2)\big) P_V(d\Vect v_1) P_V(d\Vect v_2)\nonumber\\
& & +\frac{1}{2} \int\int_{[a,b]^2\backslash [a,a+\delta]^2} e^{\sum_{k=1}^2 -\PrSc{\ParVT}{\Vect v_k}+\zeta\big(\PrSc{(\ParVT-\ParV)}{\Vect v_k}\big)}det(\Vect{v}_1,\Vect{v}_2)det\big(\Psi(\Vect{v}_1),\Psi(\Vect{v}_2)\big) P_V(d\Vect v_1) P_V(d\Vect v_2).\nonumber
\end{eqnarray} 
Let $\Vect u\neq 0$ in $\mathcal O^+$ and $\ParV=\ParVT+\alpha \Vect u$ with $\alpha>0$. From \DetT[($\neq$)] and \DetT[($\geq$)] since $\zeta$ is increasing and $\zeta(x)\sim x$ as $x\rightarrow +\infty$, we deduce that the first integral in (\ref{detcoupe}) dominates the second one when $\alpha$ goes to infinity. Therefore $ det(\Vect X_1(\ParV,\ParVT), \Vect X_2(\ParV,\ParVT))$ has the  sign $\epsilon$ (defined before \DetT) when $\alpha$ is large enough. Similarly, if $\Vect u$ is in $\mathcal O^-$ then from \DetT[($\neq$)] and \DetT[($\leq$)] $ det(\Vect X_1(\ParV,\ParVT), \Vect X_2(\ParV,\ParVT))$ has the  sign  $-\epsilon$ when $\alpha$ is large enough and it implies that the sign of $det(\Vect X_1(\ParV,\ParVT),\Vect X_2(\ParV,\ParVT))$, with $\ParV=\ParVT+ \Vect u$, is different for $\Vect u$ in $\mathcal O^+$ or in $\mathcal O^-$ as soon as $\vert \Vect u \vert $ is large enough.  We deduce that there exists a continuous curve $t \mapsto  \Vect u(t)$ which crosses $\mathcal O$ such that  $det(\Vect X_1(\ParV(t),\ParVT),\Vect X_2(\ParV(t),\ParVT))=0$ for every $\ParV(t)=\ParVT+ \Vect u(t)$. Let us note that the assumption $det\Big( E_{P_V}(\Psi(\Vect v)\tr{\Vect v})\Big)\neq 0$ ensures that $det(\Vect X_1(\ParVT,\ParVT),\Vect X_2(\ParVT,\ParVT))\neq 0$, and so $\Vect u(t)$ is never null. Let us show that there exists $t_0$ such that $\ParV(t_0)$ is a solution of the system  in~(\ref{ident}). 
 
Since the functions $(h_i)$ are nonnegative and from Definition (\ref{Xi}), we obtain that for every $t$, $\Vect X_1 (\ParV(t),\ParVT)$ is collinear to a vector in $\{1\}\times[a,b]$. By continuity of the function $t \mapsto  \PrSc{\Vect X_1 (\ParV(t),\ParVT)}{\Vect u(t)}$ and by the mean value theorem, there exists $t_0$ such that $\Vect u(t_0)$ is orthogonal to $\Vect X_1 (\ParV(t_0),\ParVT )$. Since the determinant $det(\Vect X_1(\ParV(t_0),\ParVT),\Vect X_2(\ParV(t_0),\ParVT))=0$, 
$\Vect u(t_0)$ is also orthogonal to $\Vect X_2 (\ParV(t_0),\ParVT )$ and it follows that the system in~(\ref{systscalar}) provides at least two solutions $\ParVT$ and $\ParV(t_0)$. Identification assumption (\ref{ident}) or \textbf{[C3]} fail. 
\end{proof}

%Chez moi je suis oblig� d'enlever le suffixe en png pour que ca marche!!!!! d�sol�!

\begin{figure}[htbp]
  \begin{center}
  \begin{picture}(450,150) 
 \put(0,5){\includegraphics[scale=0.25]{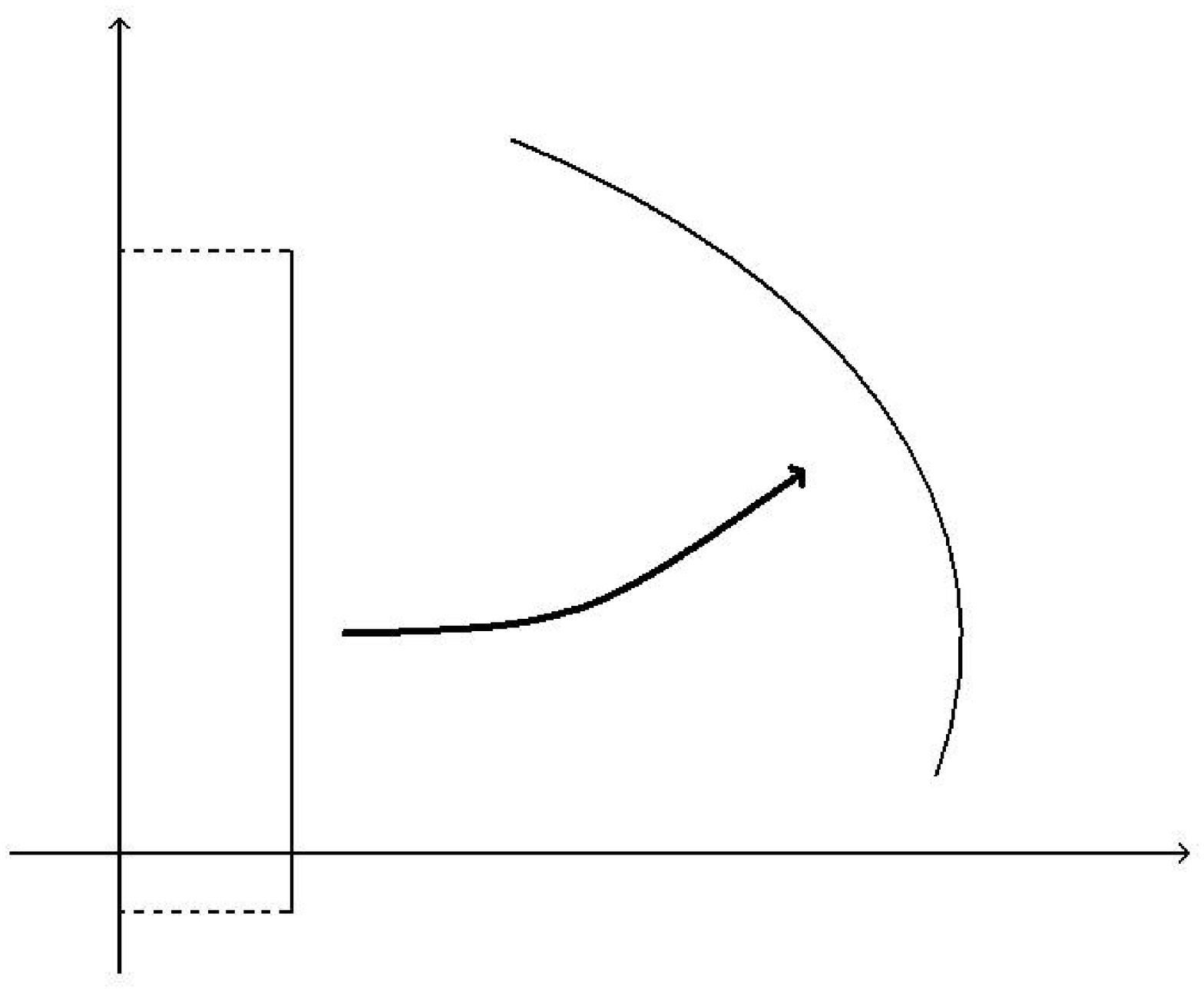}}
 \put(150,5){\includegraphics[scale=0.25]{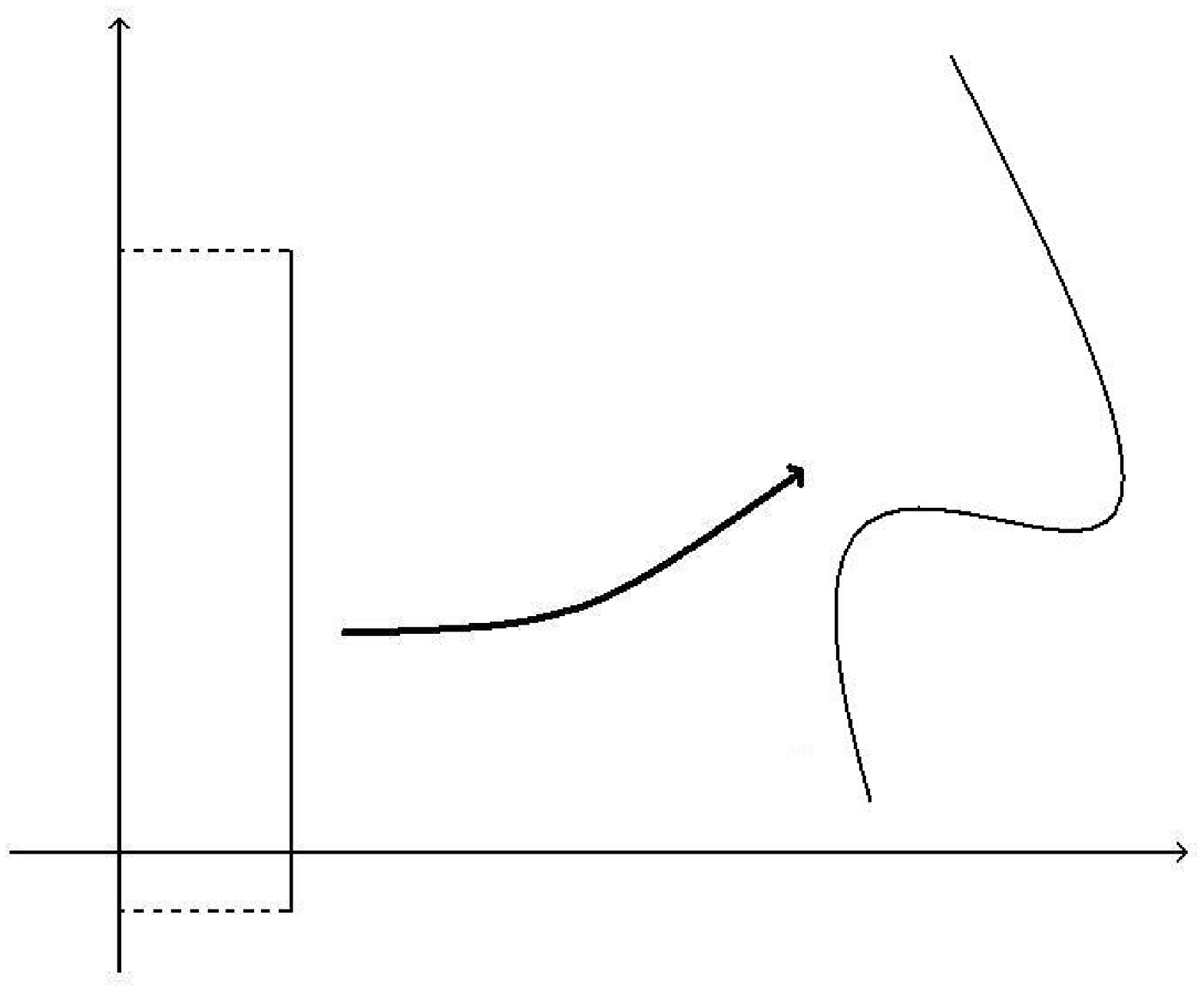}}
 \put(300,5){\includegraphics[scale=0.25]{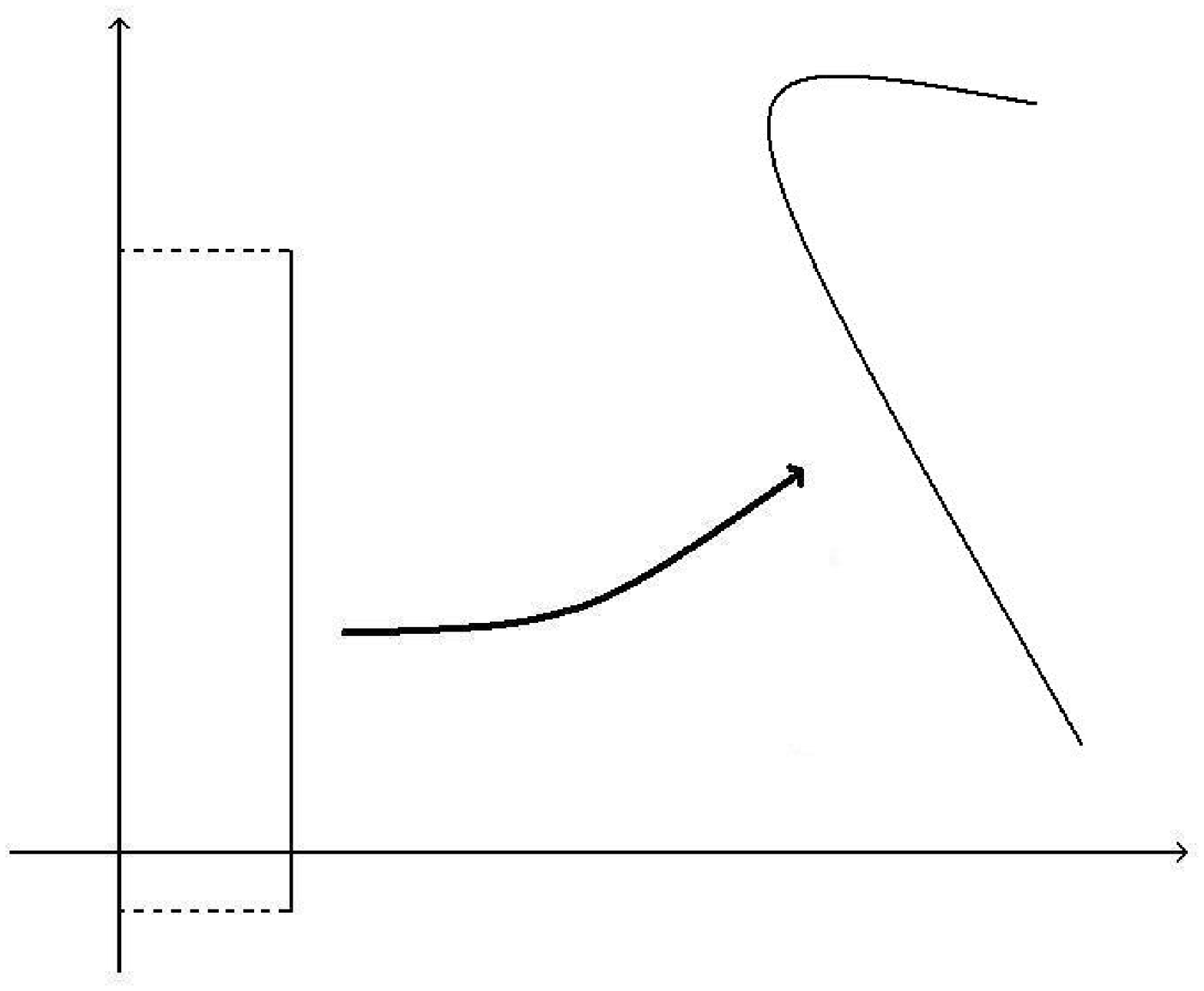}}
  \put(7,12){$a$}
  \put(7,85){$b$}
  \put(40,12){$1$}
  \put(55,60){$\Psi$}
  \put(105,75){$\gamma$}
  \put(157,12){$a$}
  \put(157,85){$b$}
  \put(190,12){$1$}
  \put(205,60){$\Psi$}
  \put(255,75){$\gamma$}
  \put(307,12){$a$}
  \put(307,85){$b$}
  \put(340,12){$1$}
  \put(355,60){$\Psi$}
  \put(405,75){$\gamma$}
  
  \end{picture}

  \caption{{\small on the left (respectively in the middle), an example of function $\Psi$ satisfying (respectively not satisfying) \Det[($\geq$)]. On the right, an example of $\Psi$ satisfying \DetT. }}\label{graphPsi}
  \end{center}
  \end{figure}

% We will present and study in details the case $p=2$ in Section \ref{p2}. We will see that our criterion seems really useful and that it is not far from being sufficient (see Proposition \ref{propnonident}).  

% \begin{Proposition}
% Let $\Psi$ a continuous function from $\RR[p-1]$ to $\RR[p]$ which is differentiable at $0$. For every $\Vect {u}$ in $\RR[p-1]$ we define $\Vect {\tilde u}=(1,\Vect {u})$ in $\RR[p]$ and $\tilde \Psi(\Vect {\tilde u})= \Psi(\Vect u)$. Then $\tilde \psi$ satisfies \textbf{[Det]} for every $\Vect {\tilde u_1},\dots ,\Vect {\tilde u_P}$ in $(\{1\}\times\RR[p-1])^p$ if and only if
% \begin{itemize}
% \item in the case $p=2$, $\Psi_2/\Psi_1$ is monotone where $\Psi=(\Psi_1,\Psi_2)$.
% \item in the case $p\ge 3$, there exist functions $g$ and $\rho$ from $\RR[p-1]$ to $\mathbb{R}$, where $g$ has a constant sign, and a invertible linear function $A$ in $\RR[p]$ such that $\Psi(\Vect u)=g(\Vect u)A(\rho(\Vect u),\Vect u)$.
% \end{itemize} 
% \end{Proposition}
% \begin{proof}

\subsection{The case K>p}\label{Kp}

 In the case where $K>p$, we noticed, in the introduction, that the identification problem should be simpler. Nevertheless, we did not find a satisfactory criterion to prove it. The following Proposition \ref{propidentb} gives a sufficient criterion which is probably far from being necessary. It is based on a slight modification of Assumption \Det which does not seem to be the appropriate tool in this setting. However, in the case where $p=2$ and $K=3$, this condition  reduces to a nice geometrical property which can be checked easily. 

First, let us present the criterion. We denote by $\mathcal A$ the set of all subsets with $p$ elements in $\{1,\ldots,K\}$,
$$ \mathcal{A}= \Big\{ I\subset \{1,\ldots,K\}, \text{ such that } \#(I)=p\Big\}.$$
We say that Assumption \Det['] is satisfied if, for every $\ParV$ in $\SpPar$, there exists a family of real coefficients $(c_I)_{I\in\mathcal{A}}$ such that the two following assumptions hold:
\begin{itemize}
\item[{\Det['($\neq$)]}] $\displaystyle\sum_{I\in\mathcal{A}} c_I det(\Vect{v}_1,\dots,\Vect v_p)det\big(\Psi^I_\ParV(\Vect{v}_1),\dots,\Psi^I_\ParV(\Vect{v}_p))$ is not $(P_V)^{\otimes p}$-a.s. identically null
\item[{\Det['($\geq$)]}]  for $(P_V)^{\otimes p}$-a.s. every $(\Vect{v}_1,\ldots,\Vect{v}_p)$ in $(\RR[p])^p$
$$ \sum_{I\in\mathcal{A}} c_I det(\Vect{v}_1,\dots,\Vect v_p)det\big(\Psi^I_\ParV(\Vect{v}_1),\dots,\Psi^I_\ParV(\Vect{v}_p))\ge 0,$$
\end{itemize}
where $\Psi^I_\ParV(\Vect v)$ denotes the $p$-dimensional vector extracted from $\Psi_\ParV(\Vect v)$ for the coordinates given by $I$.
In the particular case where $K=p$, Assumption \Det['] becomes Assumption \Det exactly.

Our criterion is the following
\begin{Proposition}\label{propidentb}
Assumption \Det[']   ensures that Assumption \textbf{[C3]} holds.
\end{Proposition}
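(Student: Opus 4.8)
The plan is to mimic the proof of Proposition~\ref{propident}, starting again from the reformulation of the system~(\ref{ident}) as the scalar system~(\ref{systscalar}), namely
$$
\PrSc{(\ParVT-\ParV)}{\Vect X_k(\ParV,\ParVT)} = 0, \qquad 1\le k \le K,
$$
where $\Vect X_k(\ParV,\ParVT)$ is given by~(\ref{Xi}). To prove that $\ParV=\ParVT$ is the only solution, it suffices to exhibit, for every $\ParV\neq\ParVT$ in $\SpPar$, at least one linear combination of the equations that cannot vanish, i.e.\ to find coefficients so that $\ParVT-\ParV$ cannot be simultaneously orthogonal to all the $\Vect X_k$. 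Concretely, since $\ParVT-\ParV$ is a fixed nonzero vector of $\RR[p]$, it is enough to show that the family $(\Vect X_k(\ParV,\ParVT))_{1\le k\le K}$ spans $\RR[p]$; equivalently, that at least one of the $p\times p$ minors $det\big((\Vect X_k(\ParV,\ParVT))_{k\in I}\big)$, $I\in\mathcal A$, is nonzero.

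The key step is then the determinant computation: for each $I\in\mathcal A$, the same conditioning on the law $P_V$ of $\Vect V(0^M|\Phi)$ and the same multilinearity/symmetrization argument that produced formula~(\ref{formDet}) gives
$$
det\big((\Vect X_k(\ParV,\ParVT))_{k\in I}\big)
= \frac{1}{p!}\int\!\cdots\!\int e^{\sum_{k=1}^p \left(-\PrSc{\ParVT}{\Vect v_k}+\zeta(\PrSc{(\ParVT-\ParV)}{\Vect v_k})\right)} det(\Vect v_1,\dots,\Vect v_p)\, det\big(\Psi^I_\ParV(\Vect v_1),\dots,\Psi^I_\ParV(\Vect v_p)\big)\, P_V(d\Vect v_1)\cdots P_V(d\Vect v_p).
$$
Multiplying by $c_I$ and summing over $I\in\mathcal A$, the common positive weight $e^{\sum_k(-\PrSc{\ParVT}{\Vect v_k}+\zeta(\cdots))}$ and the factor $det(\Vect v_1,\dots,\Vect v_p)$ factor out of the sum over $I$, leaving inside the integral exactly the quantity $\sum_{I\in\mathcal A} c_I\, det(\Vect v_1,\dots,\Vect v_p)\, det\big(\Psi^I_\ParV(\Vect v_1),\dots,\Psi^I_\ParV(\Vect v_p)\big)$ appearing in \Det[']. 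By \Det['($\geq$)] the integrand has a constant sign (the exponential weight being everywhere positive), and by \Det['($\neq$)] it is not $(P_V)^{\otimes p}$-a.s.\ zero, so $\sum_I c_I\, det\big((\Vect X_k)_{k\in I}\big)\neq 0$. In particular not all the minors vanish, the family $(\Vect X_k)_{1\le k\le K}$ has rank $p$, and therefore~(\ref{systscalar}) forces $\ParVT-\ParV=0$.

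I expect the main (minor) obstacle to be purely bookkeeping: being careful that the coefficients $(c_I)_{I\in\mathcal A}$ are allowed to depend on $\ParV$ (as in the statement), so the argument is run pointwise in $\ParV\neq\ParVT$, and that the symmetrization over $S_p$ is done on each block of $p$ rows indexed by $I$ consistently, so that the antisymmetrized product of conditional expectations collapses to $det\big(\Psi^I_\ParV(\Vect v_1),\dots,\Psi^I_\ParV(\Vect v_p)\big)$ exactly as in the passage from the penultimate to the last line of~(\ref{formDet}). Finiteness of all the integrals is inherited from the integrability assumptions already used for Proposition~\ref{propident} (the functions $h_k$ satisfy \textbf{[C1]}, \textbf{[C4]}), so no new integrability input is needed. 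The remark that \Det['] reduces to \Det when $K=p$ makes this a genuine generalization, and the $p=2$, $K=3$ specialization announced in the text is then just the observation that $\mathcal A$ has three elements and the condition becomes a sign condition on a single linear combination of three $2\times2$ determinants.
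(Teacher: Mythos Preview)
Your proposal is correct and follows essentially the same approach as the paper: reformulate~(\ref{ident}) via~(\ref{systscalar}) and~(\ref{Xi}), apply the conditioning/symmetrization that yields~(\ref{formDet}) to each $p$-minor indexed by $I\in\mathcal A$, then take the $(c_I)$-weighted sum and invoke \Det['($\geq$)] and \Det['($\neq$)] to conclude that this linear combination of minors is nonzero, hence the family $(\Vect X_k)_{1\le k\le K}$ has full rank~$p$. Your explicit remark that the $(c_I)$ may depend on $\ParV$ and that the argument is run pointwise in $\ParV\neq\ParVT$ is exactly the right reading of the assumption, and mirrors the paper's own quantifier order.
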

\begin{proof}
As in the proof of Proposition \ref{propident}, to show that (\ref{ident}) admits the unique solution $\ParVT$, it is sufficient to prove that there exists $I$ in $\mathcal{A}$ such that, for all $\ParV\not=\ParVT$, $det\big((\Vect X_i(\ParV,\ParVT))_{i\in I}\big)\neq 0$. It is equivalent to: for every $\ParV\not=\ParVT$ in $\SpPar$ there exists a family of real coefficients $(c_I)_{I\in\mathcal{A}}$ such that
\begin{equation}\label{sum}
  \sum_{I\in\mathcal{A}} c_I det\big((\Vect X_i(\ParV,\ParVT))_{i\in I}\big) >0.
\end{equation}
With calculations as in (\ref{formDet}) we obtain 
\begin{eqnarray*}
& & p!\sum_{I\in\mathcal{A}} c_I det\big((\Vect X_i(\ParV,\ParVT))_{i\in I}\big)\\
&= &\int e^{\sum_{k=1}^p -\PrSc{\ParVT}{\Vect v_k}+\zeta\big(\PrSc{(\ParVT-\ParV)}{\Vect v_k}\big)}\Bigg(\sum_{I\in\mathcal{A}} c_I det(\Vect{v}_1,\dots,\Vect v_p)det\big(\Psi^I_\ParV(\Vect{v}_1),\dots,\Psi^I_\ParV(\Vect{v}_p))\Bigg) P_V(d\Vect v_1)\cdots P_V(d\Vect v_p).\nonumber
\end{eqnarray*}
Thanks to \Det['], this quantity is positive.
\end{proof}

% Proposition \ref{propidentb} is based on a criterion of the dispersion of solutions in the non linear system (\ref{ident}). If $K>p$, we noticed in the introduction of Section \ref{SectionIdent} that the system (\ref{ident}) may have only one solution even if any sub system with $p$ equations provides several solutions. So a dispersion criterion seems not very  well-adapted in this setting and we think that Propostion \ref{propidentb} provides a sufficient condition to obtain identifiability \textbf{[C3]} which is far from being necessary. Nevertheless, in the case $p=2$ presented in the next section, we will see that if $K=3$ our criterion provides a nice geometrical condition. 

In the case where $p=2$ and $K=3$,  \Det['($\geq$)] is satisfied if and only if there exist $a,b,c$ in $\RR[3]$ such that for every $\Vect v_1$ and $\Vect v_2$ with $det(\Vect v_1,\Vect v_2)>0$
\begin{equation}\label{dd}
 a \;det \left(\Psi^{\{1,2\}}_\ParV(\Vect v_1),\Psi^{\{1,2\}}_\ParV(\Vect v_2)\right) +
b \;det \left(\Psi^{\{1,3\}}_\ParV(\Vect v_1),\Psi^{\{1,3\}}_\ParV(\Vect v_2)\right)+
c \;det \left(\Psi^{\{2,3\}}_\ParV(\Vect v_1),\Psi^{\{2,3\}}_\ParV(\Vect v_2)\right)\geq 0.
\end{equation}
If we denote by $\wedge$ the vectorial product in $\RR[3]$, the inequality in~(\ref{dd}) means that the following set
\begin{equation}\label{vectoriel}
\Big\{ \Psi_\ParV(\Vect v_1)\wedge \Psi_\ParV(\Vect v_2), \text{ for all } \Vect v_1,\Vect v_2 \text{ such that } det(\Vect v_1,\Vect v_2)>0  \Big\}
\end{equation}
is included in the half space in $\RR[3]$ with equation $cx-by+az\ge 0$. In the setting where $V_1=1$ and $V_2$ is included in an interval $[a,b]$, as for the area process,  this condition becomes a geometrical characteristic of the curve $\gamma_\ParV=\Psi_\ParV(\{1\}\times[a,b])$ in $\RR[3]$, which is easy to check visually. 

%  Therefore  This geometrical condition can be easily checked  if  $\Psi$ is explicit or if $\Psi$ is numerically approximated from the data. 

%%%%%%%%%%%%%%%%%%%%%%%%%%%%%%%%%%%%%%%%%%%%%%%%%%%%%%%%%%%%%%%%%%%%%%%%%%%%%%
%%%%%%%%%%%%%%%%%%%%%%%%%%%% Cas non h�r�ditaire %%%%%%%%%%%%%%%%%%%%%%%%%%%%%
%%%%%%%%%%%%%%%%%%%%%%%%%%%%%%%%%%%%%%%%%%%%%%%%%%%%%%%%%%%%%%%%%%%%%%%%%%%%%%

\section{Extension in the presence of non-hereditary interaction} \label{non-hereditary}

In several recent papers, Gibbs processes with non hereditary interactions are considered, in particular in the domain of stochastic geometry (see \cite{A-Der05}, \cite{A-DerDroGeo09}). The parametric estimation of such models has also been investigated. The first results in this direction have been given in \cite{A-DerLav09} via a pseudo-likelihood procedure based on a generalization of the Georgii-Nguyen-Zessin formula (\ref{GNZnonstat}). The same kind of generalization is  possible for the Takacs-Fiksel procedure. We address this improvement in this section. 

In the following, we do not assume that the energy $V_\Lambda(\varphi,\ParV)$ satisfies the heredity assumption 
(\ref{heredite}). The first consequences are that the local energy $V(x^m|\varphi;\ParV)$ is not defined in general and that the Georgii-Nguyen-Zessin  formula  is not available. Let us begin by presenting the generalization of this formula,  as stated in \cite{A-DerLav09}, Proposition 2, which is valid in the hereditary and non-hereditary settings.

 We first need to recall the concept of removable points which has been introduced in \cite{A-DerLav09}, Definition~3.

\begin{definition}\label{removable} A point $x^m$ in a configuration $\varphi$ is called removable if there exists a bounded set $\Lambda$ containing $x$ such that $V_\Lambda(\varphi\backslash x^m,\ParV)<+\infty$. We denote by $\mathcal{R}_{\ParV} (\varphi)$ the set of removable points in $\varphi$.
\end{definition}

Let us remark that the removable set is only related to the support of the underlying Gibbs measure. The local energy $V(x^m|\varphi\backslash x^m;\ParV)$ of any removable point $x^m\in\mathcal{R}_{\ParV}(\varphi)$ can then be defined by the classical expression (\ref{energielocale}) where $\Lambda$ comes from Definition~\ref{removable}. In the hereditary case, all the points of $\varphi$ are removable and we regain the classical definition of the local energy. 

The generalization of the Georgii-Nguyen-Zessin formula is the following equation
\begin{equation}\label{GNZnonstatnonH}
\Esp\left( \ism[ \mathbb{R}^d \times \mSp]{h\left(x^m,\Phi;\ParV\right) e^{- \VIPar{x^m}{\Phi}{\ParVT}}} \right) = 
\Esp\left( \sum_{x^m \in \mathcal{R}_{\ParVT}(\Phi) } h\left(x^m,\Phi\setminus x^m;\ParV\right)  \right) .
\end{equation}
Let us notice that the only  difference with the classical formula is that the sum is restricted to the removable points. Now, let us present the consequences of this formula on the Takacs-Fiksel procedure. We have to consider the two following cases: 

\begin{itemize}
\item When the support of the Gibbs measure does not depend on  $\ParV$:   the set of removable points $\mathcal{R}_{\ParV} (\varphi)$ does not depend on  $\ParV$ either. In this case, the Takacs-Fiksel estimor is defined by (\ref{eq-defTakacs}), and $C_\Lambda$ is as in (\ref{eq-defC}) to the exception of the  sum which is restricted to the removable points:

\begin{equation} \label{eq-defCnonH}
C_{\Lambda}(\varphi;h,\ParV) := \ism[\Lambda \times \mSp]{ h(x^m,\varphi;\ParV) e^{-V(x^m|\varphi;\ParV)} } -\sum_{x^m\in \mathcal{R}_{\ParVT}(\varphi)\cap \Lambda} h(x^m,\varphi\setminus x^m;\ParV).
\end{equation}

The sum is computable because by assumption, the set $\mathcal{R}_{\ParVT} (\varphi)$ does not depend on $\ParVT$. In this situation, with the same assumptions  \textbf{[C]} and \textbf{[N]}, the consistency and the asymptotic normality of the estimator may be proved as in Section 7. 

\item When the support of the Gibbs measure depends on some parameters  $\ParV_{hc}=(\ParV_1,\ldots,\ParV_q)$, $q\leq p$ (called the hardcore parameters): the remaining  parameters $\ParV_{sm}=(\ParV_{q+1},\ldots,\ParV_p)$ are supposed to parameterize the classical (or smooth) interaction between points. The set of removable points $\mathcal{R}_{\ParV} (\varphi)$ therefore depends on $\ParV_{hc}$ only. The estimation issue is more complicated in this case. Indeed, Assumption \textbf{[C2]} requires some regularities of the interaction with respect to the parameter $\ParV$, such as continuity, which clearly fail to be true for  the support parameter $\ParV_{hc}$. The Takacs-Fiksel procedure is therefore unable to estimate ${\ParV}_{hc}$. Note that this problem is not specific to the presence of non-hereditary interactions, but arises as soon as some hardcore parameters have to be estimated. In \cite{A-DerLav09}, the authors solve this problem in both the hereditary and non-hereditary setting, by introducing a two-step estimation procedure. We can follow the same strategy here. In a first step, the estimator ${\hat\ParV}_{hc}$ of the hardcore parameter is defined in a natural way according to the observed support of the point process (see Section 4.2.1 in \cite{A-DerLav09}). Then, in a second step, the Takacs-Fiksel estimator $\hat\ParV_{sm}$ is defined by (\ref{eq-defTakacs}) with

\begin{eqnarray} 
C_{\Lambda}(\varphi;h,\ParV_{sm}) &:=& \ism[\Lambda \times \mSp]{ h(x^m,\varphi;\ParV_{sm},\hat\ParV_{hc}) e^{-V(x^m|\varphi;\ParV_{sm},\hat\ParV_{hc})} } \nonumber \\
&& \qquad\qquad -\sum_{x^m\in \mathcal{R}_{\hat \ParV_{hc}}(\varphi)\cap \Lambda} h(x^m,\varphi\setminus x^m;\ParV_{sm},\hat\ParV_{hc}).\label{eq-defCnonH2}
\end{eqnarray}

Let us remark that the estimator ${\hat\ParV}_{hc}$ is plugged in the computation of $C_\Lambda$. In particular, the removable points are determined with respect to ${\hat\ParV}_{hc}$. As in \cite{A-DerLav09}, the regularity and integrability assumptions of type \textbf{[C]} for $\hat\ParV_{sm}$ and conditions on the support of the Gibbs measure are required in order to obtain the consistency of $(\hat\ParV_{hc},\hat\ParV_{sm})$.  The asymptotic normality is more difficult to obtain and no general results are available. In fact, it seems that there is no hope to expect asymptotic normality without managing the rate of convergence of $\hat\ParV_{hc}$, which should strongly depend on the model.
\end{itemize}

%%%%%%%%%%%%%%%%%%%%%%%%%%%%%%%%%%%%%%%%%%%%%%%%%%%%%%%%%%%%%%%%%%%%%%%%%%%%%%
%%%%%%%%%%%%%%%%%%%%%%%%%%% Simulation %%%%%%%%%%%%%%%%%%%%%%%%%%%%%%%%%%%%%%%
%%%%%%%%%%%%%%%%%%%%%%%%%%%%%%%%%%%%%%%%%%%%%%%%%%%%%%%%%%%%%%%%%%%%%%%%%%%%%%

\section{A short simulation study} \label{sec-simuls}

The first aim of the present paper is to explore the asymptotic properties of the Takacs-Fiksel estimate. Given a model, the important question of correctly choosing the test functions is an open question and will not be treated here. Also, a complete comparison between all the existing parametric estimation methods has not been attempted. In this section, we just present a brief simulation study to illustrate the methodology and the asymptotic results.

The model considered in this section is the (non-marked) Strauss process described by (\ref{strauss}) with two parameters. Table~\ref{tab-sim} and Figure \ref{fig-boxplot} give an empirical comparison of the approximated maximum likelihood estimate (MLE) described in \cite{A-Huang99}, the maximum pseudo-likelihood estimate (MPLE), which is a particular case of the Takacs-Fiksel estimate, and the explicit Takacs-Fiksel estimate (TFE) described in Section~\ref{sec-strauss} and in particular in \eqref{TFEexplicit}. To generate Strauss point processes and to compute the MLE, the \texttt{R} package \texttt{spatstat} is used. The MPLE and the TFE are computed respectively from~\eqref{eq-defLPL} and~\eqref{eq-defC} 
where the integrals were approximated by Monte-Carlo.\footnote{The MPLE is also implemented in \texttt{spatstat} but we did not use it. In fact, it appears that the implementation therein leads to biased estimations.} Strauss point processes are generated on the window $[0,3]^2\oplus R$ where $R$ is set to $0.05$. Recall that this parameter corresponds to the finite range of the Strauss process. Then the estimates are computed on the window $[0,\tau]^2\oplus R$ for $\tau=1,2,3$, using a $R$-erosion. We use the parametrization of \texttt{spatstat}, i.e. the intensity parameter is $\beta^\star:=e^{-\theta_1^\star}$ and the interaction parameter is $\gamma^\star:=e^{-\theta_2^\star}$. We set $\beta^\star=100$ and $\gamma^\star=0.5$. The estimation of $(\beta^\star,\gamma^\star)$ by the TFE is thus $(\widehat \beta,\widehat \gamma)=(e^{-\widehat{\ParV}_{1}},e^{-\widehat{\ParV}_{2}})$ where  $(\widehat{\ParV}_{1},\widehat{\ParV}_{2})$ are defined in  (\ref{TFEexplicit}).

As expected, the three estimates have decreasing variance when the domain of observation grows. %We notice a bias in the estimations derived by the pseudo-likelihood method that we do not explain\footnote{This bias seems to come from a coding error in the MPLE approximation in \texttt{spatstat} (personal communication from A. Baddeley).}.  
Since the explicit TFE is obtained very quickly, its behavior appears very satisfactory in comparison with the MLE and MPLE.

\begin{table}[htbp]
\begin{center}
\begin{tabular}{rrrr}
\hline
\multicolumn{4}{c}{Estimates of the parameter $\beta^\star=100$} \\
$W_\tau$& MLE& MPLE & TFE \\
\hline
$[0,1]^2$ & 103.50 (18.24) &101.19 (16.92)&101.44 (18.95) \\
$[0,2]^2$ & 101.03 (8.04) & 100.33 (8.66)& 99.99 (8.71)  \\
$[0,3]^2$ & 100.38   (6.58) & 99.55 (5.47) & 99.96 (6.27)  \\
\hline
\end{tabular}

\vspace*{1cm}

\begin{tabular}{rrrr}
\hline
\multicolumn{4}{c}{Estimates of the parameter $\gamma^\star=0.5$}\\
$W_\tau$ & MLE& MPLE & TFE \\
\hline
$[0,1]^2$ & 0.50 (0.17)   & 0.50 (0.16)  &0.52 (0.22) \\
$[0,2]^2$ &  0.50 (0.08)  & 0.50 (0.08) & 0.51(0.11) \\
$[0,3]^2$ &  0.51 (0.05)  & 0.49 (0.05) & 0.51 (0.06)\\
\hline
\end{tabular}
\end{center}
\caption{Empirical means and standard deviations between brackets for the MLE, MPLE and the (explicit) TFE based on $m=500$ replications of a Strauss point process with parameters $\beta^\star=100, \gamma^\star=0.5$, and (known) interaction range $R=0.05$, generated in the window $[0,3]^2\oplus R$  and estimated on the window $W_\tau=[0,\tau]^2\oplus R$ with $R$-erosion for $\tau=1,2,3$.}
\label{tab-sim}
\end{table}

\begin{figure}[htbp]
\begin{center}
\begin{tabular}{ll}
\includegraphics[scale=.4]{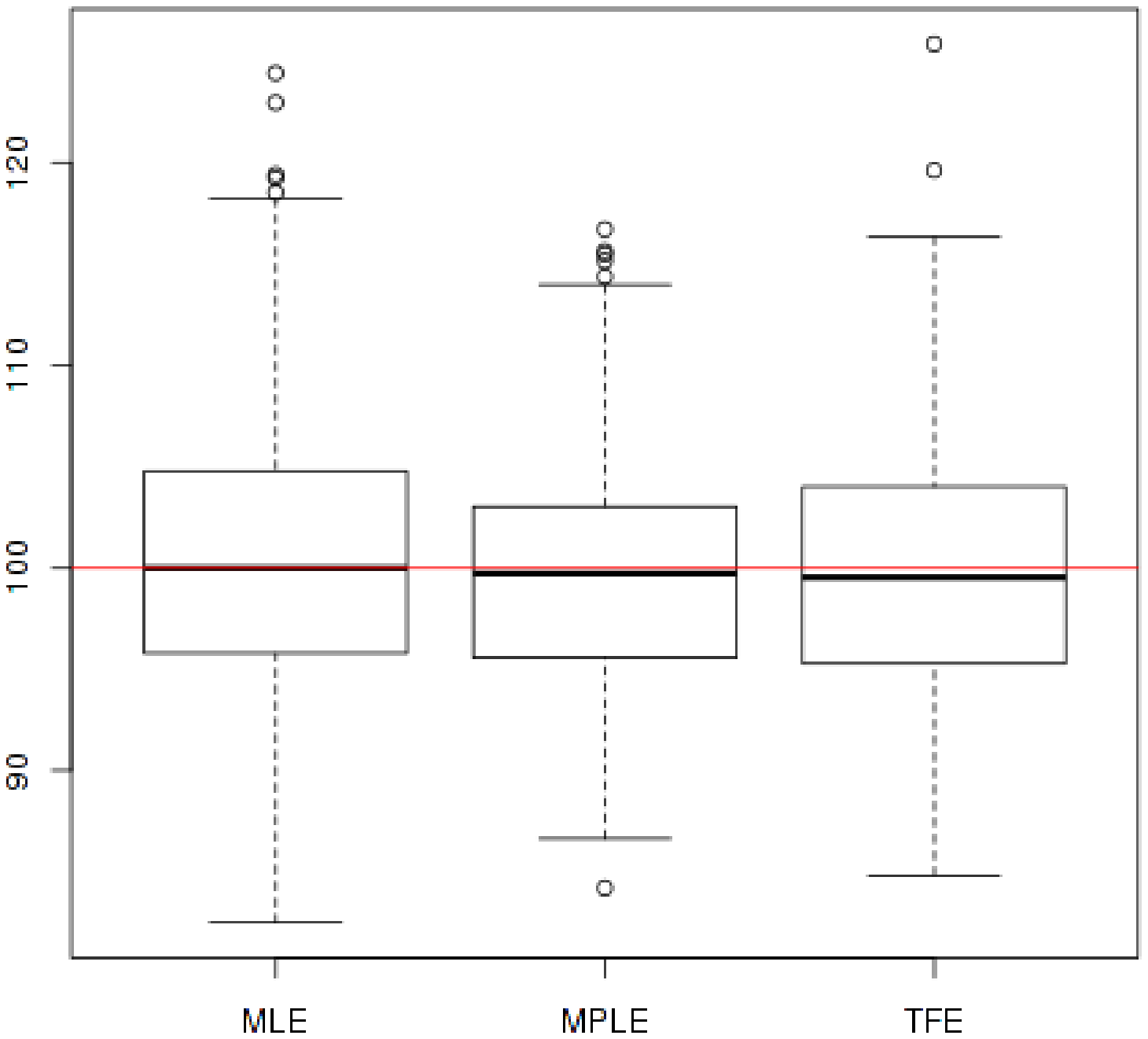} & \includegraphics[scale=.4]{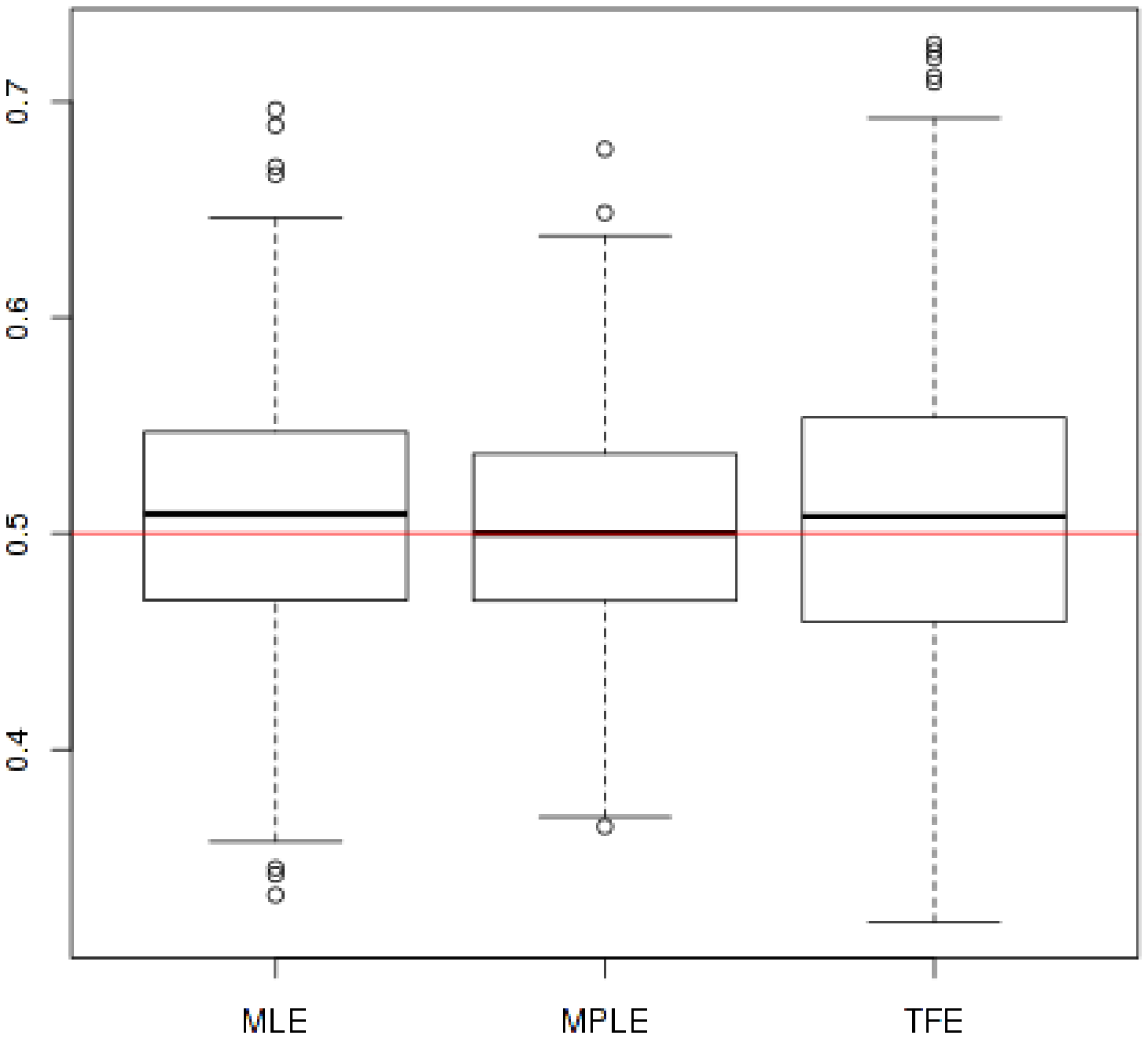}
\end{tabular}
\end{center}
\caption{Boxplots of the 3 estimates of $\beta^\star$ (left) and $\gamma^\star$ (right) when $\tau=3$, based on $m=500$ replications.}
\label{fig-boxplot}
\end{figure}

Now,  we focus more particularly on the explicit TFE given by \eqref{TFEexplicit}. We wish to assess whether the empirical covariance matrix of  $(\widehat{\theta_1},\widehat{\theta_2})^T$ agrees with the asymptotic covariance matrix deduced from \eqref{conv-norm} in Theorem \ref{prop-norm}. For this reason, we come back to the parameterization of the Strauss model of Section~\ref{sec-strauss}. %we compare the empirical covariance $\tau^2 \times \widehat{cov}(\widehat{\theta_1},\widehat{\theta_2})$ for $\tau=1,2,3$ to in Table~\ref{tab-est-cov}
From \eqref{conv-norm}, we deduce that the asymptotic covariance matrix of $\tau\times (\widehat{\theta_1},\widehat{\theta_2})^T$, as $\tau$ tends to $+\infty$, is ${\mathcal{E}}^{-1}(\Vect{h},\ParVT)^T \Mat{\Sigma}(\Vect{h},\ParVT) {\mathcal{E}}^{-1}(\Vect{h},\ParVT)$ (a simplification occurs here since $K=p=2$). Moreover, we can prove that the matrix $\mathcal{E}(\Vect{h},\ParVT)$ reduces to 
$$
\mathcal{E}(\Vect{h},\ParVT) = |\Delta|^{-1}\left( \begin{array}{ll}
\Esp\left(N_{0,\Delta} \right) & e^{\theta_2^\star} \Esp\left( N_{1,\Delta}\right) \\
0 & e^{\theta_2^\star} \Esp\left( N_{1,\Delta}\right)
\end{array}\right)
$$
where $\Delta$ is any bounded domain. Since the matrices $\mathcal{E}(\Vect{h},\ParVT)$ and $\Mat{\Sigma}(\Vect{h},\ParVT)$ depend on expectations with respect to $P_\ParVT$, they are approximated by Monte-Carlo simulations. Table~\ref{tab-est-cov}
contains the  empirical covariance of $\tau\times (\widehat{\theta_1},\widehat{\theta_2})^T$, for $\tau=1,2,3$ as well as the asymptotic covariance matrix approximated by Monte-Carlo simulations. 
Finally, in order to appreciate the Gaussian limit, Figure~\ref{fig-tcl} compares the empirical distribution of the two estimates  $(\widehat{\theta_1},\widehat{\theta_2})$ with the expected limit when $\tau=3$.
Note that the same kind of results as in Table~\ref{tab-est-cov} and Figure~\ref{fig-tcl} could have been easily obtained for $(\widehat \beta,\widehat \gamma)$ by use of the delta method.

\begin{table}[Htbp]
\begin{center}
\begin{tabular}{ccc|c}
\hline
\multicolumn{3}{c|}{Empirical covariance of $\tau \times (\widehat{\theta_1},\widehat{\theta_2})^T$} & MC approximation of \\
$\tau=1$ &$\tau=2$& $\tau=3$ &$ ({\mathcal{E}^T})^{-1} \Mat{\Sigma} \mathcal{E}^{-1}$ \\
 \hline &&&\\
$\left(\begin{array}{rr}
0.037 &-0.051\\
-0.051 & 0.185
\end{array}\right)$
&
$\left(\begin{array}{rr}
0.031 & -0.048\\
-0.048 & 0.185
\end{array}\right)$
&
$\left(\begin{array}{rr}
0.035 & -0.051 \\
 -0.051 & 0.179 
\end{array}\right)$ 
&
$\left(\begin{array}{rr}
0.034 &-0.045\\
-0.045 & 0.154 \\
\end{array}\right)$ \\
\hline
\end{tabular}
\end{center}
\caption{Comparison between the renormalized empirical covariance matrix of the estimates $(\widehat{\theta_1},\widehat{\theta_2})$ (based on $m=500$ replications of the Strauss process on $[0,\tau]^2\oplus R$ as in Table \ref{tab-sim}) and the asymptotic covariance matrix (approximated by Monte-Carlo simulations).}
\label{tab-est-cov}
\end{table}

\begin{figure}[htbp]
\begin{center}
\begin{tabular}{ll}
\includegraphics[scale=.4]{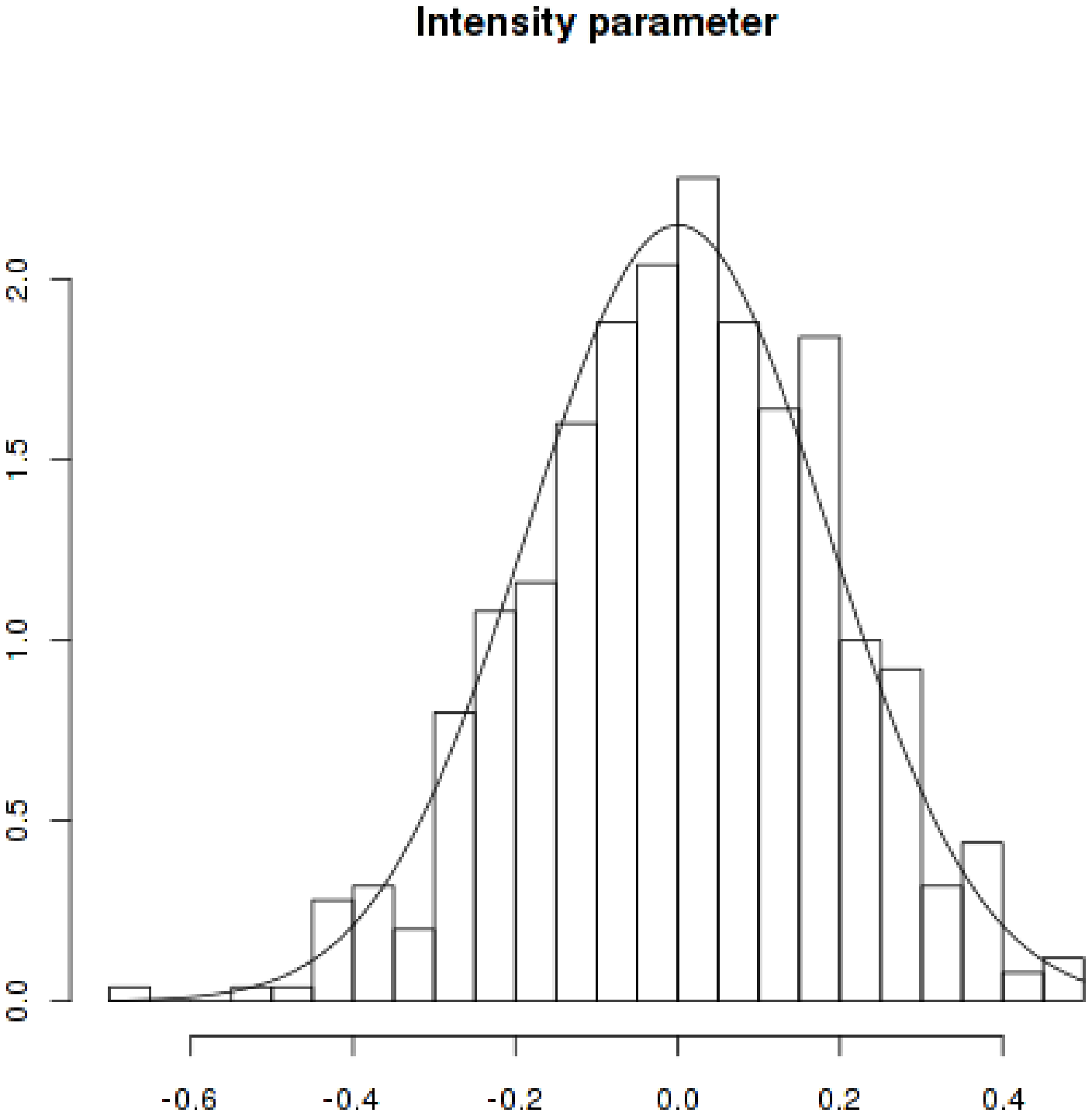} & \includegraphics[scale=.4]{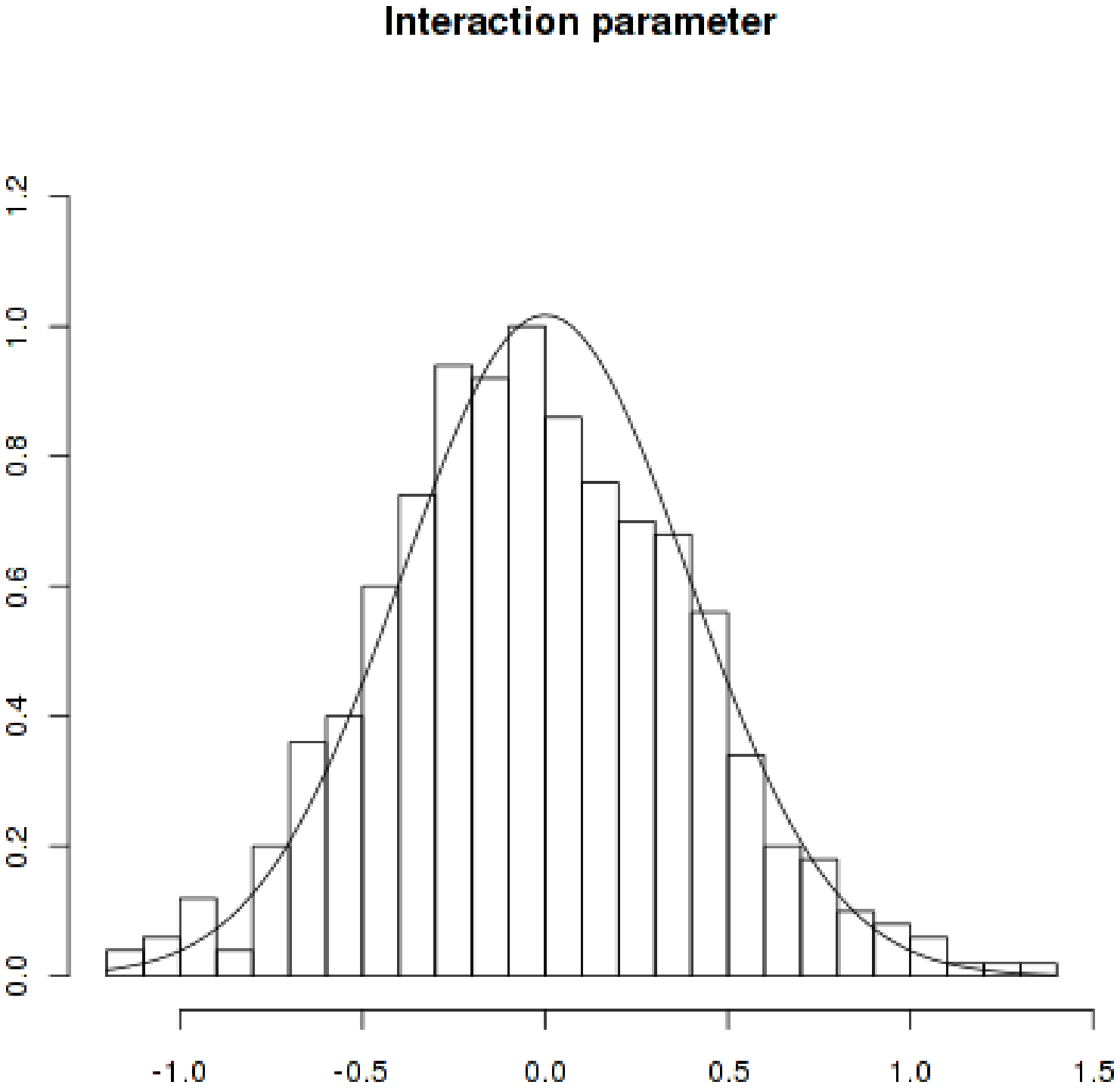}
\end{tabular}
\end{center}
\caption{Histogram of $\tau (\widehat{\theta}_j-\theta_j^\star)$ for $j=1$ (intensity parameter, left) and $j=2$ (interaction parameter, right), for the large window, i.e. $\tau=3$. The curves correspond to the densities of the asymptotic Gaussian laws, which are centered with variance .034 (left) and .154 (right) according to Table \ref{tab-est-cov}.}
\label{fig-tcl}
\end{figure}

%%%%%%%%%%%%%%%%%%%%%%%%%%%%%%%%%%%%%%%%%%%%%%%%%%%%%%%%%%%%%%%%%%%%%%%%%%%%%%
%%%%%%%%%%%%%%%%%%%%%%%%%%% Preuves %%%%%%%%%%%%%%%%%%%%%%%%%%%%%%%%%%%%%%%%%%
%%%%%%%%%%%%%%%%%%%%%%%%%%%%%%%%%%%%%%%%%%%%%%%%%%%%%%%%%%%%%%%%%%%%%%%%%%%%%%

\section{Proofs of asymptotic results} \label{sec-proofs}

\subsection{Proof of Theorem~\ref{prop-cons}}

If there is more than one stationary Gibbs measure, then some non ergodic Gibbs measures automatically exist because, in the convex set of all Gibbs measures, only the extremal measures are ergodic. But any stationary Gibbs measure can be represented as a mixture of ergodic measures (\cite{B-Geo88}, Theorem 14.10). Due to this decomposition, we can assume that $P_{\ParVT}$ is ergodic to prove the consistency of the Takacs-Fiksel estimate. The proof is split into two steps.

{\it Step 1.} $U_{\Lambda_n}$ is a contrast function.
Under \textbf{[C1]}, the ergodic Lemma \ref{ergodiclemma} and the GNZ formula given in~(\ref{GNZ}) may be applied to prove that $P_{\ParVT}$-a.s.
\begin{eqnarray*}
|\Lambda_n|^{-1} C_{\Lambda_n}(\Phi,h_k;\ParV) &\to& \Esp\left( h_k(0^M,\Phi;\ParV) e^{-V(0^M|\Phi;\ParV)} \right) - \Esp\left(h_k(0^M,\Phi\setminus 0^M;\ParV)  \right) \\
&=& \Esp\left( h_k(0^M,\Phi;\ParV) \left( e^{-V(0^M|\Phi;\ParV)}  - e^{-V(0^M|\Phi;\ParVT)} \right)
\right).
\end{eqnarray*}
Therefore, as $n\to +\infty$, one obtains $P_{\ParVT}-$a.s.
$$
U_{\Lambda_n}(\Phi;\Vect{h},\ParV) \to U(\ParV) :=\sum_{k=1}^K \Esp\left( h_k(0^M,\Phi;\ParV) \left( e^{-V(0^M|\Phi;\ParV)}  - e^{-V(0^M|\Phi;\ParVT)} \right)
\right)^2.
$$
Note that $U(\ParVT)=0$. In addition with Assumptions \textbf{[C2]} and \textbf{[C3]}, this proves that $U_{\Lambda_n}$ is a continuous contrast function vanishing only at $\ParVT$.

{\it Step 2.} Modulus of continuity

The modulus of continuity of the contrast process is defined for all $\varphi\in \Omega$ and all $\eta>0$ by 
$$
W_n(\varphi,\eta) = \sup \left\{ 
\Big|\ensuremath{U_{\Lambda_n}(\varphi;\Vect{h},\ParV)} - \ensuremath{U_{\Lambda_n}(\varphi;\Vect{h},\ParV^\prime)} \Big|: \ParV,\ParV^\prime \in \SpPar, || \ParV - \ParV^\prime || \leq \eta
\right\}.
$$
This step aims at proving that there exists a sequence $(\varepsilon_\ell)_{\ell \geq 1}$, with $\varepsilon_\ell \to 0$
as $\ell \to +\infty$ such that for all $\ell \geq 1$
\begin{equation} \label{modCont}
P \left( \limsup_{n \to +\infty}  \left( 
W_n \left(\Phi,\frac1\ell\right) \geq \varepsilon_\ell
\right)\right) = 0.
\end{equation}
Let $\ParV,\ParV^\prime \in \SpPar$, then 
\begin{eqnarray}
|U_{\Lambda_n}(\varphi;\Vect{h},\ParV)-U_{\Lambda_n}(\varphi;\Vect{h},\ParV^\prime)| &\leq& 
\sum_{k=1}^K \bigg\{ |\Lambda_n|^{-1} |C_{\Lambda_n}(\varphi;h_k,\ParV)-C_{\Lambda_n}(\varphi;h_k,\ParV^\prime)| \times \nonumber \\
&& \qquad  |\Lambda_n|^{-1} \big(|C_{\Lambda_n}(\varphi;h_k,\ParV)| + |C_{\Lambda_n}(\varphi;h_k,\ParV^\prime)| \big) \bigg\}. \label{mod1}
\end{eqnarray}
Under Assumption \textbf{[C4]}, there exists $n_0\geq 1$ such that for all $n\geq n_0$ we have for $P_{\ParVT}-$a.e. $\varphi$
\begin{eqnarray*}
|\Lambda_n|^{-1} \left( |C_{\Lambda_n}(\varphi;h_k,\ParV)| + |C_{\Lambda_n}(\varphi;h_k,\ParV^\prime)|\right)
&\leq& 2 |\Lambda_n|^{-1}\ism[\Lambda_n \times \mSp]{\max_{\ParV \in \SpPar} |h_k(x^m,\varphi;\ParV)|e^{-V(x^m|\varphi;\ParV)}} \\
&&+ 2 |\Lambda_n|^{-1} \sum_{x^m \in \varphi_{\Lambda_n}} \max_{\ParV\in \SpPar} |h_k(x^m,\varphi\setminus x^m;\ParV)| \\
&\leq & \gamma_1, 
\end{eqnarray*}
where 
$$\gamma_1:= 4\times \max_{k=1,\ldots,K} \left( \Esp\left(
\max_{\ParV\in \SpPar} |f_k(0^M,\Phi;\ParV)|
 \right) + 
\Esp\left(
\max_{\ParV\in \SpPar} |h_k(0^M,\Phi;\ParV)| e^{-V(0^M|\Phi;\ParVT)}
 \right)\right).$$
Therefore for all $n\geq n_0$
$$|U_{\Lambda_n}(\varphi;\Vect{h},\ParV)-U_{\Lambda_n}(\varphi;\Vect{h},\ParV^\prime)| \leq \gamma_1 \times \frac1{|\Lambda_n|}\sum_{k=1}^K \left( A_{\Lambda_n}(\varphi;h_k,\ParV,\ParV^\prime) + B_{\Lambda_n}(\varphi;h_k,\ParV,\ParV^\prime) \right),
$$
where 
\begin{eqnarray*}
A_{\Lambda_n}(\varphi;h_k,\ParV,\ParV^\prime) &:=& \ism[\Lambda_n\times \mSp]{|f_k(x^m,\varphi;\ParV)-f_k(x^m,\varphi;\ParV^\prime)| }\\
B_{\Lambda_n}(\varphi;h_k,\ParV,\ParV^\prime) &=& \sum_{x^m \in \varphi_{\Lambda_n}} |h_k(x^m,\varphi\setminus x^m;\ParV) - h_k(x^m,\varphi\setminus x^m;\ParV^\prime)|.
\end{eqnarray*}
Under Assumption \textbf{[C4]}, one may apply the mean value theorem in ${\RR[p]}$ as follows: there exist $\Vect{\xi}^{(1)},\ldots, \Vect{\xi}^{(p)} \in \left[ \min(\theta_1,\theta_1^\prime),\max(\theta_1,\theta_1^\prime)\right]\times \ldots \times \left[ \min(\theta_p,\theta_p^\prime),\max(\theta_p,\theta_p^\prime)\right]$ such that for all $\varphi\in \cSp$
\begin{eqnarray*}
A_{\Lambda_n}(\varphi;h_k,\ParV,\ParV^\prime) &=&  \ism[\Lambda_n\times \mSp]{
\sum_{j=1}^p (\theta_j - \theta_j^\prime) (\Vect{f_k}^{(1)}(x^m,\varphi;\Vect{\xi}^{(1)}_j))_j \; }
\\
&\leq& p\times \| \ParV -\ParV^\prime\| \ism[\Lambda_n \times \mSp]{\max_{\ParV\in \SpPar} \| \Vect{f_k}^{(1)}(x^m,\varphi;\ParV) \| \; }
\end{eqnarray*}
In a similar way, one may prove that for $P_{\ParVT}-$a.e. $\varphi$
$$
B_{\Lambda_n}(\varphi;h_k,\ParV,\ParV^\prime) \leq p\times \| \ParV -\ParV^\prime\| 
\sum_{x^m \in \varphi_{\Lambda_n}} \max_{\ParV\in \SpPar} \| \Vect{h_k}^{(1)}(x^m,\varphi\setminus x^m;\ParV) \|.
$$
Under Assumption \textbf{[C4]}, there exists $n_1(k)\geq 1$ such that for all $n\geq n_1(k)$, we have for $P_\ParVT-$a.e. $\varphi$
$$
\frac1{|\Lambda_n|} \big( A_{\Lambda_n}(\varphi;h_k,\ParV,\ParV^\prime)+B_{\Lambda_n}(\varphi;h_k,\ParV,\ParV^\prime) \big) \leq \gamma_2 \| \ParV-\ParV^\prime\|
$$
where 
$$\gamma_2:= 2p\times \max_{k=1,\ldots,K} \left(
\Esp\left( 
\max_{\ParV\in \SpPar} \| \Vect{f_k}^{(1)}(0^M,\Phi;\ParV)\|
 \right) +
\Esp\left(
\max_{\ParV\in \SpPar} \| \Vect{h_k}^{(1)}(0^M,\Phi;\ParV)\|
e^{-V(0^M|\Phi;\ParVT)}
\right)
\right).$$
We finally obtain the following upper-bound for $P_{\ParVT}-$a.e. $\varphi$, for all $\ParV,\ParV^\prime$ such that $\|\ParV-\ParV^\prime\|\leq 1/\ell$ and for all $n\geq N=\max(n_0,\max_k n_1(k))$
$$
|U_{\Lambda_n}(\varphi;\Vect{h},\ParV)-U_{\Lambda_n}(\varphi;\Vect{h},\ParV^\prime)| \leq \gamma \times \frac1\ell,
$$
with $\gamma=K\times\gamma_1 \times \gamma_2$ and therefore $W_n(\varphi,1/\ell)\leq \gamma \times \frac1\ell$.
Finally, since
$$
\limsup_{n\to +\infty} \left\{ W_n\left(\varphi, \frac1\ell \right) \geq \frac \gamma\ell \right\} = \bigcap_{m \in \NN}\bigcup_{n \geq m } \left\{ W_{n}\left( \varphi,\frac1\ell \right) \geq \frac \gamma\ell  \right\} \subset
\bigcup_{n \geq N} \left\{ W_{n}\left(\varphi, \frac1\ell \right) \geq \frac \gamma\ell  \right\}
$$
for $P_{\ParVT}-$a.e. $\varphi$, the expected result (\ref{modCont}) is proved.

\noindent\textit{Conclusion step.}
Steps 1 and 2 ensure the fact that we can apply Theorem 3.4.3 of \cite{B-Guy95} which asserts the almost sure convergence for minimum contrast estimators.

\subsection{Proof of Theorem~\ref{prop-norm}}

The proof is based on a classical result concerning asymptotic normality for minimum contrast estimators 
{\it e.g.} Theorem 3.4.5 of \cite{B-Guy95}. We split it into two different steps.\\

\noindent {\it Step 1.} Asymptotic normality of $\Vect{U}^{(1)}_{\Lambda_n}(\Phi;\Vect{h},\ParVT)$.

We start with a Lemma which states a central limit theorem. Its proof uses a conditional centering assumption as in \cite{A-JenKun94}, which holds due to the particular form of the contrast function \eqref{eq-defC}. This scheme of proof is now well-known and we mainly refer to previous works for the technical details.

\begin{lemma} \label{lem-CL}
Under the Assumptions \textbf{[N1]}, \textbf{[N2]} and \textbf{[N3]}, the following convergence holds in distribution, as $n\to +\infty$
\begin{equation}\label{eq-CL}
|\Lambda_n|^{-1/2} \; \widetilde{\Vect{C}}_{\Lambda_n}(\Phi;\Vect{h},\ParVT) \stackrel{d}{\to} \mathcal{N}(0,\Mat{\Sigma}(\Vect{h},\ParVT)),
\end{equation}
where $\Mat{\Sigma}(\Vect{h},\ParVT)$ and  $\widetilde{\Vect{C}}_{\Lambda_n}(\varphi;\Vect{h},\ParVT)$ are defined in Theorem~\ref{prop-norm}.
\end{lemma}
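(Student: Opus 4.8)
The plan is the standard one for estimating functions built from the GNZ formula (as in \cite{guyonkunsch92}, \cite{A-JenKun94}, and \cite{A-CoeDro09} for the pseudo-likelihood score): I would decompose $\widetilde{\Vect{C}}_{\Lambda_n}$ into block contributions over a grid of cubes of side $\approx D$, check that these blocks form a stationary, finite-range-dependent, \emph{conditionally centered} random field, and then invoke a central limit theorem for such fields.

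The algebraic heart of the argument is the conditional centering identity: for every bounded $\Lambda$ and every $k$,
\[
\Esp\left( C_\Lambda(\Phi;h_k,\ParVT) \,\big|\, \Phi_{\Lambda^c} \right) = 0 \qquad P_{\ParVT}\text{-a.s.}
\]
This is the conditional version of the GNZ formula (\ref{GNZ}): conditionally on $\Phi_{\Lambda^c}=\varphi_{\Lambda^c}$ the point process in $\Lambda$ has density $f_\Lambda(\cdot|\varphi_{\Lambda^c};\ParVT)$ with respect to $\pi_\Lambda$, and applying the GNZ-type (Mecke) identity in the bounded window $\Lambda$ together with $f_\Lambda(\phi\cup x^m|\varphi_{\Lambda^c};\ParVT)/f_\Lambda(\phi|\varphi_{\Lambda^c};\ParVT)=e^{-V(x^m|\phi\cup\varphi_{\Lambda^c};\ParVT)}$ turns the sum term of $C_\Lambda$ into its integral term in conditional expectation. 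Combined with the finite range property \textbf{[N3]}, this gives $\Esp\big(C_{\Delta}(\Phi;h_k,\ParVT)\,C_{\Delta'}(\Phi;h_{k'},\ParVT)\big)=0$ as soon as $\Delta\oplus\mathcal{B}(0,D)$ and $\Delta'\oplus\mathcal{B}(0,D)$ are disjoint.

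For $i\in\ZZ[d]$ let $\Delta_i(D)$ be the cube of side $D$ centered at $iD$ and set $\Vect{U}_i:=\widetilde{\Vect{C}}_{\Delta_i(D)}(\Phi;\Vect{h},\ParVT)$; by \textbf{[N3]} this is $\Phi_{\Delta_i(D)\oplus\mathcal{B}(0,D)}$-measurable and, by the translation invariance in \textbf{[C1]}, the field $(\Vect{U}_i)_{i\in\ZZ[d]}$ is stationary. Using the additivity of $\Lambda\mapsto C_\Lambda$, I would write $\widetilde{\Vect{C}}_{\Lambda_n}(\Phi;\Vect{h},\ParVT)=\sum_{i\in I_n}\Vect{U}_i$ up to (i) a boundary remainder supported on a strip of volume $O(|\Lambda_n|^{(d-1)/d})$ and (ii) thin-shell corrections accounting for the fact that one must really tile $\Lambda_n$ by cubes of side $D_n\to D$ and then pass to fixed side $D$. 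Assumption \textbf{[N1]} (finite third, hence second, moments of the $C_{\Delta_i(D)}$), together with the uncorrelation statement above, makes the squared $L^2$-norm of the boundary remainder $O(|\Lambda_n|^{(d-1)/d})$, while Assumption \textbf{[N2]} makes the squared $L^2$-norm of each thin-shell contribution uniformly $o(1)$; after dividing by $|\Lambda_n|$ both are negligible, so $|\Lambda_n|^{-1/2}\widetilde{\Vect{C}}_{\Lambda_n}$ has the same limiting distribution as $|\Lambda_n|^{-1/2}\sum_{i\in I_n}\Vect{U}_i$.

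Finally, I would apply a central limit theorem to the stationary, conditionally centered, finite-range field $(\Vect{U}_i)$ over $I_n$ as $n\to\infty$ (this is where the ergodicity of $P_{\ParVT}$ and the third-moment bound \textbf{[N1]} enter). Since $\Vect{U}_0$ and $\Vect{U}_\ell$ are uncorrelated once $|\ell|\geq 2$, the limiting covariance of $|I_n|^{-1/2}\sum_{i\in I_n}\Vect{U}_i$ equals $\sum_{|\ell|\leq 1}\Esp\big(\Vect{U}_0\tr{\Vect{U}_\ell}\big)$; combining this with $|I_n|/|\Lambda_n|\to D^{-d}$ yields (\ref{eq-CL}) with $\Mat{\Sigma}(\Vect{h},\ParVT)$ exactly as in (\ref{matC}). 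I expect the main obstacle to be not the (routine) boundary and shell bookkeeping but the verification that an off-the-shelf CLT for conditionally centered random fields genuinely applies here — this is precisely what Assumptions \textbf{[N1]}--\textbf{[N3]} are designed to provide, and for the remaining technical details I would refer to \cite{A-JenKun94} and \cite{A-CoeDro09}.
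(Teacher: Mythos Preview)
Your proposal is correct and follows exactly the approach underlying the paper's proof. The paper itself does not spell out the argument but simply identifies $\widetilde{\Vect{C}}_{\Lambda_n}(\Phi;\Vect{h},\ParVT)$ as the vector of $h_k$-residuals at $\widehat\ParV=\ParVT$ and invokes Proposition~4 of \cite{A-CoeLav10}; the content of that reference is precisely the block decomposition over cubes of side $D$, the conditional centering $\Esp(C_\Lambda(\Phi;h_k,\ParVT)\mid\Phi_{\Lambda^c})=0$ derived from the conditional GNZ formula, and the CLT for stationary conditionally centered finite-range fields from \cite{A-JenKun94}, which is exactly what you sketch.
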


\begin{proof}
The vector $\widetilde{\Vect{C}}_{\Lambda_n}(\varphi;\Vect{h},\ParVT)$ (of length $K$) corresponds to the vector of the $h_k-$residuals for $k=1,\ldots,K$ computed on the same domain $\Lambda_n$ with $\widehat{\ParV}=\ParVT$, see \cite{A-BadTurMolHaz05} for a definition and practical study of this concept of residuals. The asymptotic behavior of the residuals process has been investigated in \cite{A-CoeLav10}. In particular, with the notation of the present paper, the asymptotic normality of the vector $\widetilde{\Vect{C}}_{\Lambda_n}(\Phi;\Vect{h},\widehat{\ParV})$ for general $\widehat{\ParV}$ is obtained (see Proposition~4 in \cite{A-CoeLav10}). When $\widehat{\ParV}=\ParVT$, the assumptions and the asymptotic covariance matrix of Proposition~4 in \cite{A-CoeLav10} respectively reduce to \textbf{[N1-3]} and~(\ref{matC}).
\end{proof}

\bigskip

\noindent Now, according to the definition of $U_{\Lambda_n}(\varphi;\Vect{h},\ParVT)$, we have
$$
\Vect{U}^{(1)}_{\Lambda_n}(\varphi;\Vect{h},\ParVT) = 2|\Lambda_n|^{-2} \; \sum_{k=1}^K \Vect{C}^{(1)}_{\Lambda_n}(\varphi;h_k,\ParVT) C_{\Lambda_n}(\varphi;h_k,\ParVT).
$$
Under Assumption \textbf{[C4]}, one may apply the ergodic Lemma \ref{ergodiclemma}, in order to derive $P_\ParVT-$a.s., as $n\to +\infty$ 
\begin{equation} \label{convC1}
|\Lambda_n|^{-1} \Vect{C}^{(1)}_{\Lambda_n}(\Phi;h_k,\ParVT) \to \Esp\left(
\Vect{f_k}^{(1)}(0^M,\Phi;\ParVT) - \Vect{h_k}^{(1)}(0^M,\Phi;\ParVT)  e^{-V(0^M|\Phi;\ParVT)}
 \right).
\end{equation}
It is easily checked that this expectation reduces  to $- \Vect{\mathcal{E}}(h_k,\ParVT)$, this vector of length $p$ being defined by \\
$\Vect{\mathcal{E}}(h_k,\ParVT):= \Esp \left( 
h_k(0^M,\Phi;\ParVT) \Vect{V}^{(1)}(0^M|\Phi;\ParVT) e^{-V(0^M|\Phi;\ParVT)} 
\right)$. Let us denote by $\Mat{\mathcal{E}}(\Vect{h},\ParVT)$ the $(p,K)$ matrix $\left(\Vect{\mathcal{E}}(h_1,\ParVT),\ldots,\Vect{\mathcal{E}}(h_K,\ParVT)\right)$, then we get the following decomposition
\begin{eqnarray*}
|\Lambda_n|^{1/2} \Vect{U}^{(1)}_{\Lambda_n}(\Phi;\Vect{h},\ParVT) &=& 2 |\Lambda_n|^{1/2} \times |\Lambda_n|^{-2}  \sum_{k=1}^K \Vect{C}^{(1)}_{\Lambda_n}(\Phi;h_k,\ParVT) {C}_{\Lambda_n}(\Phi;h_k,\ParVT) \\
&=& - 2|\Lambda_n|^{-1/2} {\Mat{\mathcal{E}}(\Vect{h},\ParVT)} \widetilde{\Vect{C}}_{\Lambda_n}(\Phi;\Vect{h},\ParVT) \\
&&+ 2 \sum_{k=1}^K \left[ |\Lambda_n|^{-1} \Vect{C}^{(1)}_{\Lambda_n}(\Phi;h_k,\ParVT)- (-\Vect{\mathcal{E}}(h_k,\ParVT)) \right] |\Lambda_n|^{-1/2} C_{\Lambda_n}(\Phi;h_k,\ParVT).
\end{eqnarray*}
According to (\ref{convC1}) and Lemma~\ref{lem-CL}, Slutsky's Theorem implies that for any $k\in \{1,\ldots,K\}$, 
$$\left( |\Lambda_n|^{-1} \Vect{C}^{(1)}_{\Lambda_n}(\Phi;h_k,\ParVT)-\Vect{\mathcal{E}}(h_k,\ParVT) \right) |\Lambda_n|^{-1/2} C_{\Lambda_n}(\Phi;h_k,\ParVT) \stackrel{P}{\to} 0,
$$ 
as $n\to +\infty$, the zero here being a vector of length $p$. Using again Lemma~\ref{lem-CL}, we finally reach the following convergence in distribution as $n\to +\infty$
$$
|\Lambda_n|^{1/2} \Vect{U}^{(1)}_{\Lambda_n}(\Phi;\Vect{h},\ParVT)  \stackrel{d}{\to} \mathcal{N}\left(0, 4 {\Mat{\mathcal{E}}(\Vect{h},\ParVT)} \; \Mat{\Sigma}(\Vect{h},\ParVT) \; \tr{\Mat{\mathcal{E}}(\Vect{h},\ParVT)} \right),
$$
where $\Mat{\Sigma}(\Vect{h},\ParVT)$ is defined by~(\ref{matC}).\\

\noindent {\it Step 2.} Convergence of $\Mat{U}_{\Lambda_n}^{(2)}(\Phi;\Vect{h},\ParV)$ for $\ParV \in \mathcal{V}(\ParVT)$

According to our definition and Assumption \textbf{[N4]}, the $(p,p)$ matrix $\Mat{U}_{\Lambda_n}^{(2)}(\varphi;\Vect{h},\ParV)$ is defined for $i,j=1,\ldots,p$ by
$$
\left( \Mat{U}_{\Lambda_n}^{(2)}(\varphi;\Vect{h},\ParVT) \right)_{ij}= 2 |\Lambda_n|^{-2} \sum_{k=1}^K \left\{ 
\left( \Mat{C}_{\Lambda_n}^{(2)}(\varphi;h_k;\ParV) \right)_{ij} C_{\Lambda_n}(\varphi;h_k,\ParV) +  \left( \Vect{C}^{(1)}_{\Lambda_n}(\varphi;h_k,\ParV) \right)_i \left( \Vect{C}^{(1)}_{\Lambda_n}(\varphi;h_k,\ParV) \right)_j 
\right\}.$$
Note also that $\Vect{C}_{\Lambda_n}^{(1)}(\varphi;h_k,\ParV)$ and $\Mat{C}_{\Lambda_n}^{(2)} (\varphi;h_k,\ParV)$ are defined by 
\begin{eqnarray*}
\Vect{C}_{\Lambda_n}^{(1)}(\varphi;h_k,\ParV) &=& 
\ism[\Lambda_n\times\mSp]{ \Vect{f}^{(1)}_k(x^m,\varphi;\ParV) } \nonumber  -  \sum_{x^m \in \varphi_{\Lambda_n}} \Vect{h}^{(1)}_k(x^m,\varphi\setminus x^m;\ParV) \\
\Mat{C}_{\Lambda_n}^{(2)} (\varphi;h_k,\ParV) &=& \ism[\Lambda_n\times\mSp]{ \Mat{f}^{(2)}_k(x^m,\varphi;\ParV) } \nonumber  -  \sum_{x^m \in \varphi_{\Lambda_n}} \Mat{h}^{(2)}_k(x^m,\varphi\setminus x^m;\ParV).
\end{eqnarray*}

Under Assumption \textbf{[N4]}, then for all $i,j=1,\ldots,p$ and for any $k=1,\ldots,K$, each normalized term  $|\Lambda_n|^{-1} C_{\Lambda_n}(\Phi;h_k,\ParV)$, $|\Lambda_n|^{-1}\Vect{C}_{\Lambda_n}^{(1)}(\Phi;h_k,\ParV)$ and $|\Lambda_n|^{-1}\Mat{C}_{\Lambda_n}^{(2)} (\Phi;h_k,\ParV)$ satisfies the assumptions of the ergodic Lemma \ref{ergodiclemma}. Therefore, for any $\ParV \in \mathcal{V}(\ParVT)$, there exists a matrix $\Mat{U}^{(2)}(\Vect{h},\ParV)$ such that $P_\ParVT-$a.s.
$$
\Mat{U}_{\Lambda_n}^{(2)}(\Phi;\Vect{h},\ParV) \to \Mat{U}^{(2)}(\Vect{h},\ParV).
$$
This justifies that, for $n$ large enough, in a neighborhood of $\ParVT$, $\big(\Mat{U}_{\Lambda_n}^{(2)}(\varphi;\Vect{h},\ParV)\big)_{ij}$ is uniformly bounded by $2\times \max_{\ParV \in \mathcal{V}(\ParVT)} |\big( \Mat{U}^{(2)}(\Vect{h},\ParV)\big)_{ij}|$ for $P_\ParVT-$a.e. $\varphi$. When $\ParV=\ParVT$, recall, from (\ref{GNZ}), that $|\Lambda_n|^{-1}C_{\Lambda_n}(\Phi;h_k,\ParVT)$ converges almost surely to zero and that~(\ref{convC1}) holds. Hence, $\Mat{U}^{(2)}(\Vect{h},\ParVT)$ reduces to $2 \Mat{\mathcal{E}}(\Vect{h},\ParVT) \tr{\Mat{\mathcal{E}}(\Vect{h},\ParVT)}$. \\

\noindent {\it Conclusion Step.} From Theorem 3.4.5 of \cite{B-Guy95}, Steps 1 and 2 ensure that the normalized difference  $|\Lambda_n|^{1/2}\left( \Mat{U}^{(2)}(\Vect{h},\ParVT)\left( \widehat{\ParV}_n(\Phi)-\ParVT \right)  -\Vect{U}_{\Lambda_n}^{(1)}(\Phi;\Vect{h},\ParVT) \right)$ converges in probability to 0, which is the expected result.

%%%%%%%%%%%%%%%%%%%%%%%%%%%%%%%%%%%%%%%%%%%%%%%%%%%%%%%%%%%%%%%%%%%%%%%%%%%%%
%%%%%%%%%%%%%%%%%%%% BIBLIO %%%%%%%%%%%%%%%%%%%%%%%%%%%%%%%%%%%%%%%%%%%%%%%%%
%%%%%%%%%%%%%%%%%%%%%%%%%%%%%%%%%%%%%%%%%%%%%%%%%%%%%%%%%%%%%%%%%%%%%%%%%%%%%

\section*{Acknowledgements}  
We are grateful to the associate editor, the anonymous referee and to Hans Zessin for their valuable comments and advices.

\bibliography{takacs}

\begin{thebibliography}{10}

\bibitem{A-BadTur00}
A.~Baddeley and R.~Turner.
\newblock {{P}ractical maximum pseudolikelihood for spatial point patterns
  (with discussion)}.
\newblock {\em Australian and New Zealand Journal of Statistics}, 42:283--322,
  2000.

\bibitem{A-BadTurMolHaz05}
A.~Baddeley, R.~Turner, J.~M{\o}ller, and M.~Hazelton.
\newblock {{R}esidual analysis for spatial point processes}.
\newblock {\em Journal of the Royal Statistical Society (series B)}, 67:1--35,
  2005.

\bibitem{A-Baddeley95}
A.~J. Baddeley and M.~N. M.~Van Lieshout.
\newblock Area-interaction point processes.
\newblock {\em Ann. Inst. Statist. Math.}, 47(4):601--619, 1995.

\bibitem{A-BerMol03}
K.K. Berthelsen and J.~M{\o}ller.
\newblock {Likelihood and non-parametric {B}ayesian {MCMC} inference for
  spatial point processes based on perfect simulation and path sampling}.
\newblock {\em Scandinavian Journal of Statistics}, 30(3):549--564, 2003.

\bibitem{A-BerBilDro99}
E.~Bertin, J.-M. Billiot, and R.~Drouilhet.
\newblock Existence of ``nearest-neighbour'' spatial {G}ibbs models.
\newblock {\em Adv. Appli. Prob.}, 31:895--909, 1999.

\bibitem{A-BerBilDro08}
E.~Bertin, J.-M. Billiot, and R.~Drouilhet.
\newblock {R-local {D}elaunay inhibition model}.
\newblock {\em Journal of Statistical Physics}, 132(4):649--667, 2008.

\bibitem{A-Bes74}
J.~Besag.
\newblock Spatial interaction and the statistical analysis of lattice system.
\newblock {\em Journal of the Royal Statistical Society (series B)},
  26:192--236, 1974.

\bibitem{A-Billiot97}
J.-M. Billiot.
\newblock Asymptotic properties of {T}akacs-{F}iksel estimation method for
  {G}ibbs point processes.
\newblock {\em Statistics}, 30(1):69--89, 1997.

\bibitem{A-BilCoeDro08}
J.-M. Billiot, J.-F. Coeurjolly, and R.~Drouilhet.
\newblock {M}aximum pseudolikelihood estimator for exponential family models of
  marked {G}ibbs point processes.
\newblock {\em Electronic Journal of Statistics}, 2:234--264, 2008.

\bibitem{A-Bri75}
D.~Brillinger.
\newblock Statistical inference for stationary point processes.
\newblock {\em Stochastic processes and related topics, vol 1. 55-99 Ed. Madan
  Lal Puri. Academic Press, New-York-London}, 1975.

\bibitem{A-CoeDro09}
J.-F. Coeurjolly and R.~Drouilhet.
\newblock Asymptotic properties of the maximum pseudo-likelihood estimator for
  stationary {G}ibbs point processes including the {L}ennard-{J}ones model.
\newblock {\em Electronic Journal of Statistics}, 4:677--706, 2010.

\bibitem{A-CoeLav10}
J.-F. Coeurjolly and F.~Lavancier.
\newblock Residuals for stationary marked {G}ibbs point processes.
\newblock {\em submitted}, 2010.
\newblock \texttt{http://arxiv.org/abs/1002.0857}.

\bibitem{A-Der05}
D.~Dereudre.
\newblock {G}ibbs {D}elaunay tessellations with geometric hardcore condition.
\newblock {\em J. Stat. Phys.}, 121(3-4):511--515, 2005.

\bibitem{A-Der09}
D.~Dereudre.
\newblock The existence of quermass-interaction processes for nonlocally stable
  interaction and nonbounded convex grains.
\newblock {\em Adv. Appli. Prob.}, 41(3):664--681, 2009.

\bibitem{A-DerDroGeo09}
D.~Dereudre, R.~Drouilhet, and H.O. Georgii.
\newblock Existence of {G}ibbsian point processes with geometry-dependent
  interactions.
\newblock {\em to appear in Prob. Theory and Related Fields}, 2010.

\bibitem{A-DerLav09}
D.~Dereudre and F.~Lavancier.
\newblock {C}ampbell equilibrium equation and pseudo-likelihood estimation for
  non-hereditary {G}ibbs point processes.
\newblock {\em Bernoulli}, 15(4):1368--1396, 2009.

\bibitem{A-DerLav10}
D.~Dereudre and F.~Lavancier.
\newblock {P}ractical simulation and estimation for {G}ibbs
  {D}elaunay-{V}oronoï tessellations with geometric hardcore interaction.
\newblock {\em Computational Stat. and Data Analysis}, 55:498--519, 2011.

\bibitem{A-Diggle94}
P.~J. Diggle, T.~Fiksel, P.~Grabarnik, Y.~Ogata, D.~Stoyan, and M.~Tanemura.
\newblock On parameter estimation for pairwise interaction point processes.
\newblock {\em International Statistical Review}, 62(1):99--117, 1994.

\bibitem{A-Fiksel84}
T.~Fiksel.
\newblock Estimation of parameterized pair potentials of marked and non-marked
  {G}ibbsian point processes.
\newblock {\em Elektron. Inform. Kybernet.}, 20:270--278, 1984.

\bibitem{A-Fiksel88}
T.~Fiksel.
\newblock Estimation of interaction potentials of {G}ibbsian point processes.
\newblock {\em Math. Operationsf. Statist. Ser. Statist.}, 19:77--86, 1988.

\bibitem{A-Geo76}
H.O. Georgii.
\newblock {{C}anonical and grand canonical {G}ibbs states for continuum
  systems}.
\newblock {\em Communications in Mathematical Physics}, 48(1):31--51, 1976.

\bibitem{B-Geo88}
H.O. Georgii.
\newblock {\em {G}ibbs measures and {P}hase {T}ransitions}.
\newblock de Gruyter, Berlin, 1988.

\bibitem{B-Guy95}
X.~Guyon.
\newblock {\em Random fields on a network. {M}odeling, {S}tatistics and
  {A}pplications}.
\newblock Springer Verlag, New York, 1995.

\bibitem{guyonkunsch92}
X.~Guyon and H.R. Künsch.
\newblock {\em Asymptotic comparison of estimators in the Ising model}.
\newblock Lecture notes in Statistics 74. Springer, Berlin, 1992.

\bibitem{Heinrich92}
L.~Heinrich.
\newblock Mixing properties of {G}ibbsian point processes and asymptotic
  normality of {T}akacs-{F}iksel estimates.
\newblock {\em preprint}, 1992.

\bibitem{A-Huang99}
F.~Huang and Y.~Ogata.
\newblock Improvements of the maximum pseudo-likelihood estimators in various
  spatial statistical models.
\newblock {\em Journal of Computational and Graphical Statistics},
  8(3):510--530, 1999.

\bibitem{B-IllPenSto08}
J.~Illian, A.~Penttinen H., and Stoyan.
\newblock {\em {Statistical analysis and modelling of spatial point patterns}}.
\newblock Wiley-Interscience, 2008.

\bibitem{A-JenKun94}
J.L. Jensen and H.R. Künsch.
\newblock On asymptotic normality of pseudolikelihood estimates of pairwise
  interaction processes.
\newblock {\em Ann. Inst. Statist. Math.}, 46:475--486, 1994.

\bibitem{A-JenMol91}
J.L. Jensen and J.~M\o{}ller.
\newblock Pseudolikelihood for exponential family models of spatial point
  processes.
\newblock {\em Ann. Appl. Probab.}, 1:445--461, 1991.

\bibitem{A-Kendall99}
W.~S. Kendall, M.~N. M.~Van Lieshout, and A.~J. Baddeley.
\newblock {Q}uermass-interaction processes conditions for stability.
\newblock {\em Adv. Appli. Prob.}, 31:315--342, 1999.

\bibitem{A-Kri78}
K.~Krickeberg.
\newblock Statistical problems of point processes.
\newblock {\em Rosz. Pol. Tow. Mat., Ser. III, Mat. Stosow. 13, 29-57}, 1978.

\bibitem{B-Lehman}
E.L. Lehman.
\newblock {\em Elements of large-sample theorey}.
\newblock Springer, 1999.

\bibitem{A-Mase92}
S.~Mase.
\newblock Uniform {LAN} condition of planar {G}ibbsian point processes and
  optimality of maximum likelihood estimators of soft-core potential function.
\newblock {\em Probab. Theory and Related Fields}, 92:51--67, 1992.

\bibitem{B-KKM78}
K~Matthes, J.~Kerstan, and J.~Mecke.
\newblock {\em Infinitely Divisible Point Processes}.
\newblock J. Wiley, 1978.

\bibitem{A-Moller08}
J.~M\o{}ller and K.~Helisov\'a.
\newblock Power diagrams and interaction processes for union of discs.
\newblock {\em Adv. Appli. Prob.}, 40(2):321--347, 2008.

\bibitem{B-MolWaa03}
J.~M\o{}ller and R.~Waagepetersen.
\newblock {\em {S}tatistical {I}nference and {S}imulation for {S}patial {P}oint
  {P}rocesses}.
\newblock Chapman and Hall/CRC, Boca Raton, 2003.

\bibitem{MW07}
J.~M\o{}ller and R.~Waagepetersen.
\newblock Modern statistics for spatial point processes.
\newblock {\em Scan. J. Stat.}, 34:643--684, 2007.

\bibitem{A-NguZes79b}
X.~Nguyen and H.~Zessin.
\newblock {Integral and differential characterizations {G}ibbs processes}.
\newblock {\em Mathematische Nachrichten}, 88(1):105--115, 1979.

\bibitem{A-NguZes79}
X.X. Nguyen and H.~Zessin.
\newblock {E}rgodic theorems for spatial processes.
\newblock {\em Z. Wahrscheinlichkeitstheorie verw. Gebiete}, 48:133--158, 1979.

\bibitem{papangelou09}
F.~Papangelou.
\newblock The {A}rmenian {C}onnection: {R}eminiscences from a time of
  interactions.
\newblock {\em Journal of contemporary mathematical analysis}, 44(1):14--19,
  2009.

\bibitem{B-Pre76}
C.J. Preston.
\newblock {\em Random fields}.
\newblock Springer Verlag, 1976.

\bibitem{B-Rue69}
D.~Ruelle.
\newblock {\em {S}tatistical {M}echanics}.
\newblock Benjamin, New York-Amsterdam, 1969.

\bibitem{A-Rue70}
D.~Ruelle.
\newblock Superstable interactions in classical statistical mechanics.
\newblock {\em Comm. Math. Phys.}, 1970.

\bibitem{B-StoKenMecRus87}
D.~Stoyan, W.S. Kendall, and J.~Mecke.
\newblock {\em {{S}tochastic geometry and its applications}}.
\newblock John Wiley and Sons, Chichester, 1995.

\bibitem{A-Takacs86}
R.~Takacs.
\newblock Estimator for the pair-potential of a {G}ibbsian point process.
\newblock {\em Math. Operationsf. Statist. Ser. Statist.}, 17:429--433, 1986.

\bibitem{A-Zessin09}
H.~Zessin.
\newblock Der papangelou prozess.
\newblock {\em Journal of contemporary mathematical analysis}, 44(1):36--44,
  2009.

\end{thebibliography}

\end{document}